\providecommand{\leftsquigarrow}{%
  \mathrel{\mathpalette\reflect@squig\relax}%
}
\newcommand{\reflect@squig}[2]{%
  \reflectbox{$\m@th#1\rightsquigarrow$}%
}
\let\epsilon=\relax
\let\F=\relax
\newcommand{\epsilon}{\varepsilon}
\newcommand{\Sub}{\text{Sub}}
\newcommand{\Tmax}{T_{max}}
\newcommand{\Cmax}{K_{max}}
\newcommand{\Wtemp}{\W_{temp}}
\newcommand{\ACtemp}{\mathsf{AC}_{temp}}
\newcommand{\AFtemp}{\mathsf{AF}_{temp}}
\newcommand{\Ctemp}{\mathsf{C}_{temp}}
\newcommand{\Ftemp}{\mathsf{F}_{temp}}
\newcommand{\C}{\mathsf{C}}
\newcommand{\F}{\mathsf{F}}
\newcommand{\W}{\mathsf{W}}
\newcommand{\AF}{\mathsf{AF}}
\newcommand{\AC}{\mathsf{AC}}
\newcommand{\Tr}{\mathrm{Tr}}
\newcommand{\coTr}{\mathrm{CoTr}}
\newcommand{\WFS}{\mathrm{WFS}}
\newcommand{\AFW}{\mathrm{AF}(\W)}
\newcommand{\ACW}{\mathrm{AC}(\W)}
\newlength\rlln
\newlength\rlwd
\newlength\brlwd
\newlength\trlwd
\newlength\lrlwd
\newlength\rrlwd
\def\bstr{\rule{\rlln}{\brlwd}\rule{0pt}{\rlwd}}
\def\tstr{\rule{\rlln}{\trlwd}\rule{0pt}{\rlwd}}
\def\lstr{\ooalign{\rule{\lrlwd}{\rlln}\cr\rule{\rlwd}{0pt}}}
\def\rstr{\ooalign{\rule{\rrlwd}{\rlln}\cr\rule{\rlwd}{0pt}}}
\def\gap{\rule{\dimexpr\rlln-2\rlwd}{0pt}}
\def\pigpen#1#2#3#4{\kern1pt
  \brlwd=#4\rlwd\relax%
  \trlwd=#2\rlwd\relax%
  \lrlwd=#1\rlwd\relax%
  \rrlwd=#3\rlwd\relax%
  \stackengine{\dimexpr\rlln-\rlwd}{%
    \stackengine{0pt}{\bstr}{\lstr\gap\rstr}{O}{c}{F}{F}{L}%
  }{\tstr}{O}{c}{F}{F}{L}\kern1pt%
}
\theoremstyle{plain}
\newtheorem{theorem}{Theorem}[section]
\newtheorem{lemma}[theorem]{Lemma}
\newtheorem{proposition}[theorem]{Proposition}
\newtheorem{corollary}[theorem]{Corollary}
\newtheorem*{theorem*}{Theorem}
\theoremstyle{definition}
\newtheorem{definition}[theorem]{Definition}
\newtheorem{example}[theorem]{Example}
\newtheorem{remark}[theorem]{Remark}
\newtheorem{conditions}[theorem]{Condition}
\newtheorem{notation}[theorem]{Notation}
\let\c@equation\c@theorem
\numberwithin{equation}{section}
\crefname{lemma}{Lemma}{Lemmas}
\crefname{theorem}{Theorem}{Theorems}
\crefname{definition}{Definition}{Definitions}
\crefname{proposition}{Proposition}{Propositions}
\crefname{remark}{Remark}{Remarks}
\crefname{corollary}{Corollary}{Corollaries}
\crefname{example}{Example}{Examples}
\title{Characterizing model structures on finite posets}
\author{Kristen Mazur, Ang\'elica M. Osorno, Constanze Roitzheim, Rekha Santhanam, Danika Van Niel, and Valentina Zapata Castro}
\begin{document}

\begin{abstract}
Transfer systems on finite posets have recently been gaining traction as a key ingredient in equivariant homotopy theory. Additionally, they also naturally occur in the data of a model structure. We give a complete characterization of all model category structures on a finite lattice, using transfer systems as our main tool, resulting in new connections between abstract homotopy theory and equivariant methods. 
\end{abstract}

\maketitle

\section{Introduction}

Model category structures are a strong foundation for homotopy theory, providing a practical and workable platform to discuss homotopy-related phenomena in different categories. Model structures usually arise in the context of spaces or spectra with extra structure, or elaborate categories arising from homological algebra, which is dependent on  background knowledge of that category. 
Studying model structures on the much simpler category of a finite lattice strips away the specialized technical layer and allows us to focus solely on the model category techniques.

The original definition of a model structure on a category is given via the three classes of morphisms: weak equivalences, fibrations, and cofibrations satisfying a set of strict but naturally occurring axioms.  Due to the nature of the interactions between the classes of morphisms, all data of a model structure can be determined from less than the data given in the classical definition. In particular, a model structure can be completely determined from its weak equivalences and acyclic fibrations, which are the weak equivalences that are also fibrations. 

When the underlying category is a finite lattice, the acyclic fibrations of a model structure form a \textit{transfer system}, a subcategory that contains all objects and is closed under pullbacks. The name ``transfer system'' is a hint to their origins in equivariant homotopy theory. A transfer system defined on the subgroup lattice of an abelian group $G$ encodes a wealth of information related to $G$-equivariant multiplicative structures in homotopy theory. 
Therefore, considering model structures through the lens of transfer systems provides a fresh link between category theory and equivariant homotopy theory. 

Consider, for example, the finite total order $[n]$,  which is  the subgroup lattice of the cyclic group of order $p^n$. The set of all transfer systems on $[n]$ is well understood and can be connected to explicit notions from combinatorics \cite{NinftyOperads}. Moreover, weak equivalence classes of model structures of $[n]$ are given by partitions of $[n]$ \cite{ModelStructuresOnFiniteTotalOrders}, and thus each model structure on $[n]$ is defined by a partition that makes up the weak equivalence classes and a transfer system supported on the partition that makes up the acyclic fibrations. In fact, any partition $\W$ of $[n]$ paired with any transfer system contained in $\W$ gives rise to a model structure on $[n]$ \cite{ModelStructuresOnFiniteTotalOrders}. Therefore, we have the following bijection.
\[\{\text{model structures on } [n]\} \Leftrightarrow \{(\W,T) \mid \text{$\W$ a partition of $[n]$ and $T\subseteq \W$ a transfer system}\} \nonumber \]
We would like a generalization of this result for arbitrary finite lattices.
The appropriate generalization of a partition to any finite lattice is a \emph{wide decomposable subcategory}, \emph{i.e.}, a subcategory that contains all objects and that satisfies the property that if $g \circ f$ is in the subcategory, then so are both $g$ and $f$.  Thus, on $[n]$ every wide decomposable subcategory $\W$ with transfer system $T \subseteq \W$ gives rise to a model structure. For a general lattice, Droz and Zakharevich prove in \cite{DrozZakharevichExtending} that the weak equivalences of a model structure always form a wide decomposable category, but the result observed for $[n]$ does not extend to other lattices. For instance, on the lattice $[1] \times [1] = \Sub(C_{pq})$, every wide decomposable subcategory $\W$ occurs as the weak equivalence class of at least one model structure, but there are transfer systems $T \subseteq \W$ where $T$ does not occur as the acyclic fibrations to a model structure with weak equivalences $\W$, see \cref{ex:square} for more details. Even worse, on the lattice $[2] \times [1]$ there are wide decomposable subcategories that do not occur as the weak equivalence class of a model structure at all, as we show in \cref{ex:middle-arrow}. 

Therefore, in order to fully understand model category structures on finite lattices, we have to answer the following questions.
\begin{itemize}
\item Which wide decomposable subcategories of a finite lattice occur as the weak equivalence class of a model structure?
\item Suppose we know that a wide decomposable subcategory $\W$ is the weak equivalence class of at least one model structure. Which transfer systems $T \subseteq \W$ occur as the acyclic fibrations of a model structure with weak equivalences $\W$?
\end{itemize}

In this paper, we  provide complete and explicit answers to these questions.
The following are our main results. 

\begin{theorem*}[{\Cref{thm:allaboutw}}]
Let $P$ be a finite lattice and let $Q$ be a wide decomposable  subcategory of $P$. Then $Q$ is the weak equivalence class of at least one model structure if and only if, for all morphisms $f \in Q$, there exists a factorization $f=\sigma_n \circ \sigma_{n-1} \circ \cdots \circ \sigma_1$ into indecomposable arrows such that for some $0 \leq k \leq n$, both of the following hold.
\begin{itemize}
\item For any $i \leq k$, all pushouts of $\sigma_i$ are in $\W$.
\item For any $i > k$, all pullbacks of $\sigma_i$ are in $\W$.
\end{itemize}
\end{theorem*}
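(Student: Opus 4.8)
The plan is to characterize the wide decomposable subcategories $\W$ that arise as weak equivalences by reducing the problem to a purely combinatorial condition on indecomposable arrows, exploiting the factorization/lifting structure of a model category. The forward direction (necessity) should proceed as follows: suppose $\W$ is the class of weak equivalences of a model structure $(\W, \C, \F)$ on $P$. Fix a morphism $f \in \W$. Using the functorial factorization axiom, factor $f = p \circ i$ with $i$ an acyclic cofibration and $p$ a fibration; since $f$ is a weak equivalence and weak equivalences satisfy two-out-of-three, $p$ is an acyclic fibration. Because $P$ is a finite lattice, every arrow further factors into finitely many indecomposable arrows, and I would argue that each indecomposable factor of $i$ is an acyclic cofibration (pushouts of acyclic cofibrations are acyclic cofibrations, hence lie in $\W$) and each indecomposable factor of $p$ is an acyclic fibration (pullbacks stay in the acyclic fibrations, hence in $\W$). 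Concatenating these factorizations gives a factorization $f = \sigma_n \circ \cdots \circ \sigma_1$ with a cut point $k$ such that $\sigma_1, \dots, \sigma_k$ come from $p$ (so their pullbacks are in $\W$) — wait, one must be careful with the order of composition, so I would set it up so that $\sigma_1, \dots, \sigma_k$ come from the acyclic cofibration $i$ (pushouts in $\W$) and $\sigma_{k+1}, \dots, \sigma_n$ come from the acyclic fibration $p$ (pullbacks in $\W$), matching the statement.

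For the converse (sufficiency), given $\W$ satisfying the factorization condition, I would build an explicit model structure with $\W$ as weak equivalences. The natural candidate is to define the acyclic cofibrations to be generated (under composition, pushout, and the relevant closure) by the indecomposable arrows $\sigma_i$ with $i \leq k$ appearing in the hypothesized factorizations, and dually the acyclic fibrations from the $\sigma_i$ with $i > k$; then let $\C$ be the arrows with the left lifting property against acyclic fibrations and $\F$ those with the right lifting property against acyclic cofibrations. One then checks the model category axioms: two-out-of-three and closure of $\W$ under retracts (retract closure is automatic in a poset since retracts of an arrow in a poset are isomorphic to sub-composites, but I would verify $\W$ is closed appropriately), the lifting axioms (which hold by construction via the adjoint functor / small object-style argument, trivial here since $P$ is finite), and crucially the factorization axioms — every arrow factors as (cofibration)$\circ$(acyclic fibration) and as (acyclic cofibration)$\circ$(fibration). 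The factorization hypothesis on $\W$ is exactly what is needed to produce the acyclic-fibration/acyclic-cofibration halves of these factorizations for arrows lying in $\W$; for arrows not in $\W$ one factors through a suitable object and uses that the non-weak-equivalence parts can be absorbed into $\C$ or $\F$.

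The main obstacle I anticipate is the sufficiency direction, specifically showing that the classes defined by lifting properties interact correctly — that the putative acyclic cofibrations really do equal $\W \cap \C$ and the putative acyclic fibrations equal $\W \cap \F$, rather than something larger or smaller. In a general lattice the closure operations (pushout, pullback, composition) can interact in subtle ways, and one must ensure that taking pushouts of the chosen indecomposables does not accidentally produce an arrow outside $\W$ — this is precisely where the hypothesis ``all pushouts of $\sigma_i$ are in $\W$'' is used, and verifying that iterated pushouts and composites stay controlled will require a careful induction on the length of factorizations, together with the lattice structure (using that pullbacks and pushouts in a finite lattice are meets and joins) to keep the combinatorics tractable. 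I would expect to need a lemma, proved separately, stating that the subcategory generated by a set of indecomposable arrows closed under pushout inside $\W$ is itself contained in $\W$, and dually for pullbacks; this lemma is the technical heart of the argument.
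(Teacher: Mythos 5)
The sufficiency direction is where your proposal genuinely breaks down. You define the acyclic cofibrations and the acyclic fibrations independently, each generated from its half of the hypothesized factorizations; by \cref{prop:Kmax-closure} and \cref{prop:Tmax-closure} these generated classes are exactly $\Cmax$ and $\Tmax$, and in general the pair $(\AC,\AF)=(\Cmax,\Tmax)$ is not a model structure, nor even a premodel structure: a model structure with $\AF=\Tmax$ must have $\AC={}^\boxslash\Tmax\cap\W$ (\Cref{rem:redundancy}), and $\Cmax$ is typically strictly larger and not even contained in ${}^\boxslash\Tmax$. In the $[2]\times[2]$ example at the end of \Cref{sec:FromTStoMS}, $\Cmax$ has five nonidentity arrows, whereas ${}^\boxslash\Tmax\cap\W$ consists of the single arrow $(2,0)\to(2,1)$; in particular $(2,1)\to(2,2)\in\Cmax$ lies in $\Tmax$ itself, so $\Cmax\not\subseteq{}^\boxslash\Tmax$ and even the containment $\AC\subseteq\C$ fails. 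You correctly identify the compatibility $\AC=\C\cap\W$ as the main obstacle, but the lemma you propose (that the class generated by pushout-stable indecomposables inside $\W$ stays in $\W$) is easy (\cref{lem:vee-in-W}, \cref{prop:Kmax-closure}) and does not address it. The missing ingredient, which the paper supplies in \cref{prop:Tmax-and-W-closed-under-pushouts}, is the proof that ${}^\boxslash\Tmax\cap\W$ is closed under pushouts: if $f\in\W$ has a pushout outside $\W$, then \Cref{assumption} gives $f=f_2\circ f_1$ with $f_2$ a nontrivial composite of short arrows whose pullbacks lie in $\W$, hence $f_2\in\Tmax$, and the evident square of $f$ against $f_2$ admits no lift, so $f\notin{}^\boxslash\Tmax$; combined with \cref{prop:cotransfer} this produces the model structure $(\W,\Tmax)$. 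Nothing in your sketch supplies this step, and the remark that non-weak-equivalence parts ``can be absorbed into $\C$ or $\F$'' is not an argument.

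In the necessity direction, your intermediate claim is false: the indecomposable factors of an acyclic cofibration need not be acyclic cofibrations, because $\AC$ and $\AF$ are in general not decomposable. For instance, on $[2]$ there is a model structure with $\W$ all of $[2]$, $\AF=\{0\to1\}$ and $\AC=\{0\to2,\,1\to2\}$; the acyclic cofibration $0\to2$ factors through $0\to1$, which is an acyclic fibration and not even a cofibration. The conclusion you want is nevertheless true, but requires a different justification: either pass first to the model structure whose acyclic fibrations are $\Tmax$ (\cref{prop:AFmax=Tmax}), using that $\Tmax$ is saturated (\cref{prop:Tmax-saturated}) and that the acyclic cofibrations lie in the saturated $\Cmax$ --- this is the paper's route --- or argue directly that any pullback of an indecomposable piece of $p\in\AF$ occurs as the last factor of a pullback of $p$ itself and therefore lies in $\W$ by decomposability of $\W$, and dually for the pieces of $i\in\AC$. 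As written, then, the forward direction rests on an unjustified and generally false claim, and the backward direction lacks its essential step.
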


\begin{theorem*}[{\Cref{thm:AFW-is-lattice}}]
Let $P$ be a finite lattice and $\W \subseteq P$ a wide decomposable subcategory that is the weak equivalence class of at least one model structure. Then the set of transfer systems contained in $\W$ that occur as the acyclic fibrations of a model structure with weak equivalences $\W$ have a maximum element $\AF_{max}$ and a minimum element $\AF_{min}$, such that a given transfer system $T \subseteq \W$ occurs as the acyclic fibrations of a model structure with weak equivalences $\W$ if and only if 
\[
\AF_{min} \subseteq T \subseteq \AF_{max}.
\]
\end{theorem*}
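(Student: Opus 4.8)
The plan is to analyze the poset structure on the set $\AFW$ of transfer systems arising as acyclic fibrations of a model structure with weak equivalences $\W$, ordered by inclusion, and show it is closed under intersection and union. The key input is the standard fact that a model structure on a finite lattice (indeed on any category) is determined by the pair $(\W, \AF)$ where $\AF$ is the class of acyclic fibrations; here $\AF$ is required to be a transfer system contained in $\W$, and the cofibrations are recovered as the maps having the left lifting property against $\AF$, the fibrations as the maps having the right lifting property against the cofibrations inside $\W$, and so on. So the content is purely about which transfer systems $T \subseteq \W$ give a valid model structure when paired with $\W$.

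First I would establish that $\AFW$ is closed under binary intersections. Given $T_1, T_2 \in \AFW$, I want $T_1 \cap T_2 \in \AFW$; note $T_1 \cap T_2$ is again a transfer system (transfer systems on a fixed lattice are closed under intersection, since the pullback-stability and composition-closure conditions are preserved) and is contained in $\W$. To see it is the acyclic fibration class of an actual model structure with weak equivalences $\W$, I expect to use a retract/lifting argument: the cofibrations for $(\W, T_1 \cap T_2)$ are those maps with the LLP against $T_1 \cap T_2 \supseteq$ ... — more precisely, having a smaller $\AF$ gives a \emph{larger} class of cofibrations, and one must check the factorization axioms still hold. On a finite lattice every map factors as a composite of indecomposables, and \Cref{thm:allaboutw} gives, for each $f \in \W$, a distinguished factorization with a ``pushout part'' followed by a ``pullback part''; I would use this factorization to produce the required (acyclic cofibration, fibration) and (cofibration, acyclic fibration) factorizations, checking that replacing $\AF$ by $T_1 \cap T_2$ does not obstruct them because the relevant indecomposable arrows are unaffected. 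Dually — and this is where I expect the real work — I would show $\AFW$ is closed under binary unions: $T_1 \cup T_2$ need not a priori be a transfer system, so the first step is to show that the transfer system it generates (its closure under composition and pullback) still lies inside $\W$ and still gives a model structure; this should follow because both $T_1$ and $T_2$ are built from indecomposables whose pushouts-or-pullbacks are in $\W$, and composites/pullbacks of such remain in $\W$ by the closure properties of wide decomposable subcategories.

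Once $\AFW$ is closed under finite meets and joins and is nonempty (by hypothesis) and finite (the ambient lattice is finite, so there are only finitely many transfer systems), it is automatically a finite lattice, hence has a maximum element $\AF_{max} := \bigcup_{T \in \AFW} T$ and a minimum element $\AF_{min} := \bigcap_{T \in \AFW} T$. The remaining point is the ``convexity'' claim: if $T$ is a transfer system with $\AF_{min} \subseteq T \subseteq \AF_{max}$, then $T \in \AFW$. For this I would argue that membership in $\AFW$ is characterized by a condition on the indecomposable arrows of $T$ — namely, reading off \Cref{thm:allaboutw}, the maps in $\W$ that are \emph{not} forced to be acyclic fibrations by the pushout side must be available as fibrations, and the precise requirement on $T$ is that every indecomposable arrow of $\W$ that is ``pullback-bad'' (some pullback not in $\W$) lies in $T$, while every indecomposable arrow of $T$ that is not ``pullback-good'' must be ``pushout-good''. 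Both of these are \emph{upward-closed} and \emph{downward-closed} conditions respectively among transfer systems sandwiched between $\AF_{min}$ and $\AF_{max}$, so any $T$ in that interval satisfies them and hence lies in $\AFW$.

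The main obstacle I anticipate is the union step: proving that $T_1 \cup T_2$ (or the transfer system it generates) still underlies a model structure, since unions of transfer systems are badly behaved in general and the interaction with the factorization axiom is delicate. I would handle it by working indecomposable-arrow by indecomposable-arrow, leaning on \Cref{thm:allaboutw} to get a ``pushout-then-pullback'' factorization adapted simultaneously to the arrows coming from $T_1$ and from $T_2$, and on the closure properties of $\W$ as a wide decomposable subcategory to keep all the intermediate pushouts and pullbacks inside $\W$. A secondary subtlety is verifying that the cofibration and fibration classes determined by $(\W,T)$ for $T$ in the claimed interval genuinely satisfy the lifting and two-out-of-three compatibilities; here I expect the finiteness of the lattice and the explicit indecomposable description to reduce everything to a finite check that only depends on which indecomposables lie in $T$, which is constant on the interval $[\AF_{min}, \AF_{max}]$ modulo the good/bad dichotomy above.
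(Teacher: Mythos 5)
Your overall strategy (show $\AFW$ is closed under meets and joins, then prove convexity) matches the shape of the paper's argument, but the two steps where you defer the work are precisely where your plan breaks down. First, you never isolate the criterion that actually drives everything in the paper, namely that a transfer system $T \subseteq \W$ lies in $\AFW$ if and only if ${}^\boxslash T \cap \W$ is a cotransfer system (\Cref{prop:cotransfer}), nor the upward-closure fact (\Cref{prop:range}) that if $T \in \AFW$ and $T \subseteq T' \subseteq \W$ is a transfer system then $T' \in \AFW$. With these two facts the join step is immediate (since $T_1 \vee T_2$ is a transfer system squeezed between $T_1$ and $\W$) and the interval description $\AFW = [\AF_{min}, \AF_{max}]$ falls out of upward closure plus closure under meets; without them, your meet step reduces to the unverified assertion that ``replacing $\AF$ by $T_1 \cap T_2$ does not obstruct the factorizations,'' which is not an argument. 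The paper's meet proof is a genuine computation: ${}^\boxslash(T_1 \wedge T_2)$ is the join $({}^\boxslash T_1) \vee ({}^\boxslash T_2)$, i.e.\ the composition-closure of the union, and one checks pushout-stability of ${}^\boxslash(T_1 \wedge T_2)\cap\W$ factor by factor using decomposability of $\W$. You also have the difficulty inverted: the join is the easy direction, the meet is where the work is.

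Second, your convexity step rests on a claim that is false: you assert that membership in $\AFW$ ``only depends on which indecomposables lie in $T$,'' characterized by a pushout-good/pullback-bad dichotomy on short arrows. Transfer systems are not determined by their short arrows, and the paper's own $[2]\times[2]$ example at the end of \Cref{sec:FromTStoMS} refutes your claim: there $\AF_{min}$ contains both $(0,0)\to(0,1)$ and the composite $(0,0)\to(0,2)$, while the transfer system consisting of $(0,0)\to(0,1)$ alone has exactly the same indecomposable arrows yet is \emph{not} in $\AFW$ (its candidate acyclic cofibrations ${}^\boxslash T \cap \W$ fail to be closed under pushouts because of the long arrow $(0,0)\to(0,2)$). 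So constraints imposed by composite arrows are essential, and any convexity argument must route through something like \Cref{prop:range} (or the explicit lifting criterion of \Cref{prop:cotransfer}) rather than a local check on indecomposables. Similarly, \Cref{thm:allaboutw} cannot be used as an input here without circularity concerns: in the paper the characterization of weak equivalence sets is proved \emph{after}, and partly by means of, the interval structure of $\AFW$.
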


We  furthermore show that $\AF_{max}$ is  the largest transfer system contained in $\W$, and   give an explicit method for constructing $\AF_{min}$.

Finally, we not only  give a complete characterization of model categories on finite posets, but also show that  the characterizing conditions are  practical, feasible, and applicable to a notable class of examples. In addition, our use of transfer systems provides a new, tangible link between the worlds of abstract homotopy theory, category theory, and equivariant topology. 

\subsection{Organization}
We begin with the necessary background information on transfer systems and their dual, cotransfer systems, in \Cref{sec:transferbackground}. This includes a study on how transfer and cotransfer systems on a fixed  lattice $P$ form a lattice themselves.  We then provide a roundup of definitions and basic properties of weak factorization systems and model categories in 
\Cref{sec:modelbackground}. In \Cref{sec:FromTStoMS} we see how transfer and cotransfer systems can be used to describe all possible model structures for a fixed subcategory that is the weak equivalence class of at least one model structure. 
\Cref{sec:centers} is then dedicated to characterizing the subcategories that can occur as a weak equivalence class to begin with.
Finally, in \Cref{sec:examples} we apply our results to classify all subcategories that are weak equivalence classes on the lattice $[n]\times [1]$ and describe all model structures on the diamond lattice $[2]^{\ast n}$. We end with an example of a non-modular lattice, namely the pentagon.

\subsection{Acknowledgments}
We would like to thank Scott Balchin, Alvaro Belmonte, and Inna Zakharevich for helpful and encouraging conversations. We would furthermore like to thank Morten Brun for his notation suggestion. 

AO and CR thank the Isaac Newton Institute for Mathematical Sciences, Cambridge, for support and hospitality during the program ``Equivariant Homotopy Theory in Context'' where some work on this paper was undertaken. This work was supported by EPSRC grant no EP/Z000580/1. AO, CR, RS, and DVN would like to thank the Isaac Newton Institute for Mathematical Sciences, Cambridge, and Queen's University Belfast for support and hospitality during the programme Operads and calculus, where work on this paper was undertaken. This work was supported by EPSRC grant EP/Z000580/1. KM was partially supported by Elon University's FR\&D summer fellowship. AO was partially supported by NSF grant
DMS–2204365. DVN was partially supported by NSF grant DMS-2135884. RS was partially supported by IIT Bombay CPDA  grant. 

\section{Transfer Systems and Cotransfer Systems on Finite Lattices}\label{sec:transferbackground}

In this section we  introduce posets, lattices, and (co)transfer systems on a finite lattice, which are the main ingredients for describing model structures on finite lattices when considering a lattice as a category. We further summarize results from  \cite{LatticeVia},\cite{algebralattice}, and \cite{RubinDetectingOperads} that show that the set of all (co)transfer systems on a fixed lattice is itself a lattice.  Finally, given a wide subcategory of a lattice, we prove that there exists  a maximum (co)transfer system within that subcategory, a fact which is key to the results in Sections \ref{sec:FromTStoMS} and \ref{sec:centers}.

\begin{definition}\label{def:poset}
    A \emph{poset} $(P,\le)$ consists of a set $P$ together with a binary relation $\le$  that is reflexive, antisymmetric, and transitive. Given two elements $x$ and $y$ in a poset, their \emph{join} $x\vee y$ is their least upper bound while their \emph{meet} $x\wedge y$ is their greatest lower bound. A \emph{finite lattice} is a finite poset in which every pair of elements has both a join and a meet. This implies that a finite lattice has a maximum and a minimum element.
\end{definition}

Depending on the context, we can represent a lattice as a directed graph with a morphism $a\to b$ whenever the strict relation $a<b$ holds. Alternatively, we can use a Hasse diagram, in which only the covering relations are drawn. 

\begin{example}
    The total order lattice $[n]$ consists of objects $\{0,1,2,3,\dots, n\}$ and the standard order. The lattice $[3]$ is shown below. 
    \[
  \begin{tikzpicture}[>=stealth, bend angle=30, baseline=(current bounding box.center)]
    \fill (0,0) circle (2pt);
    \fill (1.5,0) circle (2pt);
    \fill (3,0) circle (2pt);
    \fill (4.5,0) circle (2pt);

    \node (0) at (0,0) {};
    \node (1) at (1.5,0) {};
    \node (2) at (3,0){};
    \node (3) at (4.5,0){};

    \draw[->] (0)--(1);
    \draw[->] (1)--(2);
    \draw[->] (2)--(3);
    \draw[->,bend left] (1) to (3); 
    \draw[->, bend right] (0) to (3);
    \draw[->, bend right, bend angle=60] (0) to (2);
\end{tikzpicture}
    \]
\end{example}

\begin{example}\label{ex:lattice2x} Given two lattices $P$ and $Q$, their product $P\times Q$ is also a lattice, with $(a,b) \le (a',b')$ if and only if $a\le a'$ and $b\le b'$. For example,
    the lattice $[m]\times[n]$ has elements $(i,j)$ for $0\le i \le m$ and $0\le j \le n$. We draw this lattice as a grid; $[2]\times [1]$ is shown below. 
    \[
    \begin{tikzpicture}[>=stealth, bend angle=20, baseline=(current bounding box.center)]
    \fill (0,0) circle (2pt);
    \fill (1.5,0) circle (2pt);
    \fill (3,0) circle (2pt);
    \fill (0,1.5) circle (2pt);
    \fill (1.5,1.5) circle (2pt);
    \fill (3,1.5) circle (2pt);

    \node (00) at (0,0){};
    \node (10) at (1.5,0) {};
    \node (20) at (3,0){};
    \node (01) at (0,1.5){};
    \node (11) at (1.5,1.5){};
    \node (21) at (3,1.5){};

    \draw[->] (00)--(01);
    \draw[->] (00)--(10);
    \draw[->] (00)--(11);
    \draw[->] (00)--(21);
    \draw[->, bend right] (00) to (20);
    \draw[white, line width=4pt] (10)--(11);
    \draw[->] (10)--(11);
    \draw[->] (10)--(20);
    \draw[->] (10)--(21);
    \draw[->] (01)--(11);
    \draw[->,bend left] (01) to (21);
    \draw[->] (11)--(21);
    \draw[->] (20)--(21);
    
    \end{tikzpicture}
    \]
\end{example}

\begin{definition}
    Let $P$ and $Q$ be finite lattices. The \emph{parallel composition} $P*Q$ is the lattice obtained by taking the disjoint union $P\amalg Q$ and identifying the maximum of $P$ with that of $Q$, and similarly for the minima.
\end{definition} 

\begin{example}\label{ex:2iterated}
The lattice $[2]*[2]$ is isomorphic to $[1]\times[1]$. In general, for $n\geq1$, the poset $[2]^{*n}$ has $n$ incomparable elements, together with a maximum and a minimum. 

    For ease of notation, we denote the elements of $[2]^{*n}$ as follows: $\top$ for the maximum, $\bot$ for the minimum, and $1,2,\dots, n$ for the incomparable elements. The only nontrivial relations are $\bot < i$, $i < \top$ for $i=1,\dots,n$, and $\bot < \top$, which is induced by transitivity from the others. Below is $[2]^{*n}$, excluding the relation $\bot < \top$.
\[
\begin{tikzpicture}[>=stealth, bend angle=20, baseline=(current bounding box.center)]
    \node (bot) at (0,0){$\bot$};
    \node (top) at (0,3){$\top$};
    \node (1) at (-1.5,1.5){$1$};
    \node (2) at (-0.5,1.5){$2$};
    \node at (0.5,1.5){$\cdots$};
    \node (n) at (1.5,1.5){$n$};

    \draw[->] (bot)--(1);
    \draw[->] (bot)--(2);
    \draw[->] (bot)--(n);
    \draw[->] (1)--(top);
    \draw[->] (2)--(top);
    \draw[->] (n)--(top);
\end{tikzpicture}
\]
We revisit this lattice in \Cref{subsec:diamond}.
\end{example}

A poset $(P,\le)$ defines a category with the elements of $P$ as its objects and a unique arrow $a \to b$ whenever $a\leq b$.  Thus, given $a$ and $b$ in $P$, there is at most one morphism between $a$ and $b$, \emph{i.e.}, $|\text{mor}(a,b)|\leq 1$. The identity morphisms are given by reflexivity,  composition is given by transitivity, and antisymmetry implies that the only isomorphisms in this category are the identity morphisms. Conversely, any small category with the property $|\text{mor}(a,b)| \leq 1$ defines a poset. This has the following consequences.
\begin{enumerate}
    \item Any diagram of composable arrows in a poset category commutes. 
    \item The join of two elements of $P$, if it exists, gives the coproduct in the category, and similar, the meet gives the product.
See the diagram below.
\[\begin{tikzcd}
	a & {a \vee b} \\
	 {a \wedge b} & b
	\arrow[from=2-1, to=1-1]
	\arrow[from=1-1, to=1-2]
	\arrow[from=2-1, to=2-2]
	\arrow[from=2-2, to=1-2]
\end{tikzcd}\]
\item If $P$ is a finite lattice, then the associated category is complete and cocomplete. The limit of any diagram is given by the meet of all the elements in it, and similarly, the colimit is given by their join.
\end{enumerate}

This paper leverages transfer systems, and dually cotransfer systems, to characterize model structures on finite lattices. Transfer systems were originally defined in the context of equivariant operads as a subposet of the subgroup lattice of a finite group satisfying certain properties, see \cite{NinftyOperads,RubinDetectingOperads}. We provide the more general definition of a transfer system on a finite lattice, following \cite{LatticeVia}.

\begin{definition}\label{def:TS}
    Given a finite lattice $P = (P,\le)$, a \textit{transfer system} on $P$ consists of a partial order $\mathcal{R}$ on $P$ such that:
    \begin{enumerate}
        \item for all $x$ and $y$ in $P$, if $x\mathcal{R} y$ then $x\le y$, and
        \item for all $x$, $y$, and $z$ in $P$, if $x\mathcal{R} y$ and $z\le y$ then $(x\wedge z )\mathcal{R} z$.
    \end{enumerate}
\end{definition}

\begin{example}\label{ex:tson[2]} A transfer system on $[2]$ is shown below. The presence of the  long arrow implies the short one by the second condition in the definition of a transfer system.  \[
  \begin{tikzpicture}[>=stealth, bend angle=30, baseline=(current bounding box.center)]
    \fill (0,0) circle (2pt);
    \fill (1.5,0) circle (2pt);
    \fill (3,0) circle (2pt);

    \node (0) at (0,0) {};
    \node (1) at (1.5,0) {};
    \node (2) at (3,0){};

    \draw[->] (0)--(1);
    \draw[->,bend right] (0) to (2); 
\end{tikzpicture}
    \]
In total, there are five transfer systems on $[2]$, see \cite[Example 14]{NinftyOperads}. More generally, for a finite total order $[n]$, there are $Cat(n+1)$ transfer systems, where $Cat(k)$ denotes the $k^{th}$ Catalan number \cite[Theorem 20]{NinftyOperads}.
\end{example}

Cotransfer systems are the less-explored dual of transfer systems. 

\begin{definition}\label{def:coTS}
    Given a finite lattice $P = (P,\le)$, a \textit{cotransfer system} on $P$ consists of a partial order $\mathcal{R}$ such that:
    \begin{enumerate}
        \item for all $x$ and $y$ in $P$, if $x\mathcal{R} y$ then $x\le y$, and
        \item for all $x$, $y$, and $z$ in $P$, if $x\mathcal{R} y$ and $x\le z$ then $z\mathcal{R} (y\vee z)$.
    \end{enumerate}
\end{definition}

\begin{example}
    A cotransfer system on $[2]$ is shown below. The presence of the long arrow implies now the presence of the short one by the second condition in the definition of a cotransfer system. 
    \[
  \begin{tikzpicture}[>=stealth, bend angle=30, baseline=(current bounding box.center)]
    \fill (0,0) circle (2pt);
    \fill (1.5,0) circle (2pt);
    \fill (3,0) circle (2pt);

    \node (0) at (0,0) {};
    \node (1) at (1.5,0) {};
    \node (2) at (3,0){};

    \draw[->] (1)--(2);
    \draw[->,bend right] (0) to (2); 
\end{tikzpicture}
    \]
\end{example}

It is often easier to think of transfer systems and cotransfer systems in terms of their categorical definitions.

\begin{definition}\label{def:wide} A \emph{wide subcategory} of a category $\mathcal{C}$ is a subcategory that contains all objects. 
\end{definition}

\begin{proposition}[{\cite[Proposition 4.2]{LatticeVia}}]\label{prop:TS-and-coTS-categorically} Let $P$ be a finite lattice, and $\mathcal{R}$ a collection of morphisms. Then
\begin{enumerate}
\item $\mathcal{R}$ is a transfer system on $P$ if and only if $\mathcal{R}$ is a wide subcategory of $P$ that is closed under pullbacks, and
\item $\mathcal{R}$ is a cotransfer system on $P$ if and only if $\mathcal{R}$ is a wide subcategory of $P$ that is closed under pushouts.
\end{enumerate}
\end{proposition}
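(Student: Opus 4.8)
The plan is to unwind both definitions and reduce everything to the elementary observation that, in a poset regarded as a category, pullbacks are meets and pushouts are joins. Concretely, since $P$ is a finite lattice it is complete and cocomplete (as noted just before \Cref{def:TS}), so every cospan $a\to b\leftarrow c$ has a pullback; one checks directly that this pullback is the object $a\wedge c$ with its two structure maps, so that the base change of the morphism $(a\to b)$ along $(c\to b)$ is the unique arrow $a\wedge c\to c$. Dually, the pushout of a span $b\leftarrow a\to c$ is $b\vee c$, and the cobase change of $(a\to b)$ along $(a\to c)$ is the arrow $c\to b\vee c$. Verifying these two facts means checking that the universal property of the pullback (resp.\ pushout) coincides with that of the greatest lower bound (resp.\ least upper bound), which is immediate once one recalls that every square of composable arrows in a poset commutes.

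Next I would translate the phrase ``wide subcategory of $P$'' into relational language. A subcategory of $P$ is wide exactly when it contains every object, which is automatic since we are handed a relation $\mathcal{R}$ on all of $P$; contains every identity, equivalently $\mathcal{R}$ is reflexive; and is closed under composition, equivalently $\mathcal{R}$ is transitive. The requirement that the morphisms of the subcategory be morphisms of $P$ is precisely condition (1) of \Cref{def:TS}, namely $x\mathcal{R}y\Rightarrow x\le y$. Antisymmetry of $\mathcal{R}$ comes for free, since $\mathcal{R}\subseteq{\le}$ and $\le$ is antisymmetric; hence ``partial order $\mathcal{R}$ satisfying (1)'' and ``wide subcategory of $P$'' already describe the same thing.

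With these translations in place, part (1) becomes essentially a tautology. Given a transfer system, the properties reflexive, transitive, and contained in $\le$ say exactly that $\mathcal{R}$ is a wide subcategory of $P$, and condition (2) of \Cref{def:TS}---if $x\mathcal{R}y$ and $z\le y$ then $(x\wedge z)\mathcal{R}z$---is, by the first paragraph, precisely the assertion that $\mathcal{R}$ is closed under base change, using that ``$z\le y$'' is the same datum as ``a morphism $z\to y$ to pull back along.'' Conversely, a wide subcategory of $P$ closed under pullbacks is, by the second paragraph, a partial order refining $\le$, and closure under pullback is condition (2). Part (2) is handled identically, replacing ``pullback, meet, $z\le y$'' throughout by ``pushout, join, $x\le z$,'' and invoking the dual of the first paragraph.

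I do not anticipate a real obstacle: the argument is bookkeeping matching each relational axiom to the corresponding categorical closure property. The only step that requires genuine (if routine) care is the identification in the first paragraph of pullback squares in a poset with meet squares, and of pushout squares with join squares---in particular confirming that the arrow appearing ``parallel'' to the given morphism in the pullback (resp.\ pushout) square is the one named in condition (2) of \Cref{def:TS} (resp.\ \Cref{def:coTS}).
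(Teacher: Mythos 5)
Your proof is correct: the dictionary you set up (pullback of a cospan in a poset is the meet, pushout of a span is the join, wideness of the subcategory is reflexivity, closure under composition is transitivity, antisymmetry is inherited from $\le$) makes condition (2) of \Cref{def:TS} literally the statement of closure under base change, and dually for \Cref{def:coTS}. The paper itself offers no proof of this proposition, citing \cite{LatticeVia} instead, and your direct verification is exactly the standard argument that the cited result rests on, so there is nothing substantive to compare beyond noting the agreement.
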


Any finite lattice $P$ supports at least two (co)transfer systems; the complete (co)transfer system consisting of all morphisms in $P$ and the trivial (co)transfer system defined below.

\begin{example}[$T_{triv}$ and $K_{triv}$] The trivial transfer system $T_{triv}$, which is the same as the trivial cotransfer system $K_{triv}$, on a finite lattice is the partial order given by equality. In other words, $T_{triv}$ and $K_{triv}$ consist of all objects and only the identity morphisms. The trivial (co)transfer system on $[2]\times [1]$ is shown below. \[
    \begin{tikzpicture}[>=stealth, bend angle=20, baseline=(current bounding box.center)]
    \fill (0,0) circle (2pt);
    \fill (1.5,0) circle (2pt);
    \fill (3,0) circle (2pt);
    \fill (0,1.5) circle (2pt);
    \fill (1.5,1.5) circle (2pt);
    \fill (3,1.5) circle (2pt);    

    \node (00) at (0,0){};
    \node (01) at (0,1.5){};
    \node (20) at (3,0){};
    \node (21) at (3,1.5){};

    \draw[->, white, bend left] (01) to (21);
    \draw[->, white, bend right] (00) to (20);
    \end{tikzpicture}
    \]
\end{example}

\begin{example} There are 68 transfer systems and 68 cotransfer systems on the lattice $[2] \times [1]$. Four transfer systems are shown on the left below, and four cotransfer systems are shown on the right.

\[
    \resizebox{0.5\textwidth}{!}{
    \begin{tikzpicture}[>=stealth, bend angle=20, baseline=(current bounding box.center)]
    \fill (-3,0) circle (2pt);
    \fill (-1.5,0) circle (2pt);
    \fill (0,0) circle (2pt);
    \fill (-3,1.5) circle (2pt);
    \fill (-1.5,1.5) circle (2pt);
    \fill (0,1.5) circle (2pt);

    \fill (1.5,0) circle (2pt);
    \fill (3,0) circle (2pt);
    \fill (4.5,0) circle (2pt);
    \fill (1.5,1.5) circle (2pt);
    \fill (3,1.5) circle (2pt);
    \fill (4.5,1.5) circle (2pt);

    \fill (-3,-3) circle (2pt);
    \fill (-1.5,-3) circle (2pt);
    \fill (0,-3) circle (2pt);
    \fill (-3,-1.5) circle (2pt);
    \fill (-1.5,-1.5) circle (2pt);
    \fill (0,-1.5) circle (2pt);

    \fill (1.5,-3) circle (2pt);
    \fill (3,-3) circle (2pt);
    \fill (4.5,-3) circle (2pt);
    \fill (1.5,-1.5) circle (2pt);
    \fill (3,-1.5) circle (2pt);
    \fill (4.5,-1.5) circle (2pt);
    
    \node (00tl) at (-3,0) {};
    \node (10tl) at (-1.5,0) {};
    \node (20tl) at (0,0) {};
    \node (01tl) at (-3,1.5) {};
    \node (11tl) at (-1.5,1.5) {};
    \node (21tl) at (0,1.5) {};

    \node (00tr) at (1.5,0) {};
    \node (10tr) at (3,0) {};
    \node (20tr) at (4.3,0) {};
    \node (01tr) at (1.5,1.5) {};
    \node (11tr) at (3,1.5) {};
    \node (21tr) at (4.5,1.5) {};

    \node (00bl) at (-3,-3) {};
    \node (10bl) at (-1.5,-3) {};
    \node (20bl) at (0,-3) {};
    \node (01bl) at (-3,-1.5) {};
    \node (11bl) at (-1.5,-1.5) {};
    \node (21bl) at (0,-1.5) {};

    \node (00br) at (1.5,-3) {};
    \node (10br) at (3,-3) {};
    \node (20br) at (4.5,-3) {};
    \node (01br) at (1.5,-1.5) {};
    \node (11br) at (3,-1.5) {};
    \node (21br) at (4.5,-1.5) {};

    \draw[->] (00tl) -- (01tl);
    \draw[->] (00tl)--(10tl);
    \draw[->] (00tl)--(11tl);
    \draw[->] (10tl)--(20tl);
    \draw[->] (11tl)--(21tl);
    \draw[->] (00tl)--(21tl);
    \draw[->, bend right] (00tl) to (20tl);
    \draw[->, bend left,white] (01tl) to (21tl);

    \draw[->] (00tr)--(10tr);
    \draw[->] (00tr)--(01tr);
    \draw[->] (00tr)--(11tr);
    \draw[->] (00tr)--(21tr);
    \draw[->, bend right] (00tr) to (20tr);
    \draw[->] (01tr)--(11tr); 
    \draw[white, line width=4pt] (10tr)--(11tr);
    \draw[->] (10tr)--(11tr);

    \draw[->] (00bl)--(01bl);
    \draw[->] (10bl)--(11bl);
    \draw[->] (10bl)--(20bl);
    \draw[->] (10bl)--(21bl);
    \draw[->] (11bl)--(21bl);

    \draw[->] (00br)--(10br);
    \draw[->, bend right] (00br) to (20br);
    \draw[->] (10br)--(20br);
    \draw[->] (11br)--(21br);

    \draw[lightgray] (-3.5,-0.5)--(0.5,-0.5)--(0.5,2)--(-3.5,2)--(-3.5,-0.5);
    \draw[lightgray] (1,-0.5)--(5,-0.5)--(5,2)--(1,2)--(1,-0.5);
    \draw[lightgray] (-3.5,-3.5)--(0.5,-3.5)--(0.5,-1)--(-3.5,-1)--(-3.5,-3.5);
    \draw[lightgray] (1,-3.5)--(5,-3.5)--(5,-1)--(1,-1)--(1,-3.5);

    \draw[dashed] (5.5,-3.5)--(5.5,2);
\end{tikzpicture}}
    \resizebox{0.5\textwidth}{!}{
    \begin{tikzpicture}[>=stealth, bend angle=20, baseline=(current bounding box.center)]
    \fill (-3,0) circle (2pt);
    \fill (-1.5,0) circle (2pt);
    \fill (0,0) circle (2pt);
    \fill (-3,1.5) circle (2pt);
    \fill (-1.5,1.5) circle (2pt);
    \fill (0,1.5) circle (2pt);

    \fill (1.5,0) circle (2pt);
    \fill (3,0) circle (2pt);
    \fill (4.5,0) circle (2pt);
    \fill (1.5,1.5) circle (2pt);
    \fill (3,1.5) circle (2pt);
    \fill (4.5,1.5) circle (2pt);

    \fill (-3,-3) circle (2pt);
    \fill (-1.5,-3) circle (2pt);
    \fill (0,-3) circle (2pt);
    \fill (-3,-1.5) circle (2pt);
    \fill (-1.5,-1.5) circle (2pt);
    \fill (0,-1.5) circle (2pt);

    \fill (1.5,-3) circle (2pt);
    \fill (3,-3) circle (2pt);
    \fill (4.5,-3) circle (2pt);
    \fill (1.5,-1.5) circle (2pt);
    \fill (3,-1.5) circle (2pt);
    \fill (4.5,-1.5) circle (2pt);
    
    \node (00tl) at (-3,0) {};
    \node (10tl) at (-1.5,0) {};
    \node (20tl) at (0,0) {};
    \node (01tl) at (-3,1.5) {};
    \node (11tl) at (-1.5,1.5) {};
    \node (21tl) at (0,1.5) {};

    \node (00tr) at (1.5,0) {};
    \node (10tr) at (3,0) {};
    \node (20tr) at (4.5,0) {};
    \node (01tr) at (1.5,1.5) {};
    \node (11tr) at (3,1.5) {};
    \node (21tr) at (4.5,1.5) {};

    \node (00bl) at (-3,-3) {};
    \node (10bl) at (-1.5,-3) {};
    \node (20bl) at (0,-3) {};
    \node (01bl) at (-3,-1.5) {};
    \node (11bl) at (-1.5,-1.5) {};
    \node (21bl) at (0,-1.5) {};

    \node (00br) at (1.5,-3) {};
    \node (10br) at (3,-3) {};
    \node (20br) at (4.5,-3) {};
    \node (01br) at (1.5,-1.5) {};
    \node (11br) at (3,-1.5) {};
    \node (21br) at (4.5,-1.5) {};
    \draw[->] (00tl)--(21tl);
    \draw[->, bend left] (01tl) to (21tl);
    \draw[->] (11tl)--(21tl);
    \draw[->] (10tl)--(20tl);
    \draw[->] (10tl)--(21tl);
    \draw[->] (20tl) to (21tl);

    \draw[->] (10tr)--(11tr);
    \draw[->] (10tr)--(20tr);
    \draw[->] (10tr)--(21tr);
    \draw[->] (11tr)--(21tr);
    \draw[->] (20tr)--(21tr);

    \draw[->] (00bl)--(21bl);
    \draw[->] (10bl)--(21bl);
    \draw[->] (11bl)--(21bl);
    \draw[white, line width=4pt] (10bl)--(11bl);
    \draw[->] (10bl)--(11bl);
    \draw[->] (01bl)--(11bl);
    \draw[->, bend left] (01bl) to (21bl);
    \draw[->] (20bl)--(21bl);

    \draw[->] (01br)--(11br);
    \draw[->, bend left] (01br) to (21br);
    \draw[->] (10br)--(20br);
    \draw[->] (11br)--(21br);
    \draw[->, bend right,white] (00br) to (20br);

    \draw[lightgray] (-3.5,-0.5)--(0.5,-0.5)--(0.5,2)--(-3.5,2)--(-3.5,-0.5);
    \draw[lightgray] (1,-0.5)--(5,-0.5)--(5,2)--(1,2)--(1,-0.5);
    \draw[lightgray] (-3.5,-3.5)--(0.5,-3.5)--(0.5,-1)--(-3.5,-1)--(-3.5,-3.5);
    \draw[lightgray] (1,-3.5)--(5,-3.5)--(5,-1)--(1,-1)--(1,-3.5);

    \draw[dashed] (-4,-3.5)--(-4,2);
\end{tikzpicture}}
\]

\end{example}

The set of all (co)transfer systems on a finite lattice is a poset via the usual inclusion, \emph{i.e.}, $T_1 \le T_2$ if and only if every arrow in $T_1$ is also in $T_2$.  

\begin{definition}
Let $P$ be a finite lattice. Define $\Tr(P)$ to be the poset of transfer systems on $P$, and define $\coTr(P)$ to be the poset of cotransfer systems on $P$.
\end{definition}

Further, $\Tr(P)$ and $\coTr(P)$ are lattices with meets and joins defined as follows. By definition, if $T_1$ and $T_2$ are transfer systems on $P$, their intersection $T_1 \cap T_2$ is again a transfer system, and thus gives the meet. The analogous statement is true for cotransfer systems. Constructing the join in $\Tr(P)$ and  $\coTr(P)$  requires more work. We begin by introducing some notation.

\begin{definition}\label{def:join}
Let $P$ be a finite lattice and $S$ a subset of the relations in $P$. Then $\left< S \right>$ is the smallest transfer system containing $S$, and $(\!( S )\!)$ is the smallest cotransfer system containing $S$.
\end{definition}

We can define $\left<S\right>$  as the intersection of all transfer systems on $P$ containing $S$, which justifies its existence. Dually, $(\!( S )\!)$ is the intersection of all cotransfer systems containing $S$.

 \begin{remark}\label{rem:generation}
 Let $P$ be a finite lattice and $S \subseteq P$ a subset with all isomorphisms. By \cite[Theorem A.2]{RubinDetectingOperads}, we can obtain $\left<S\right>$ by closing $S$ under the following operations.
\begin{enumerate}
\item Take the closure of $S$ under pullbacks. 
\item Take the closure of the collection obtained in the previous step under composition.
\end{enumerate}
In other words, 
\[ \left<S\right> = \{s_n \circ s_{n-1} \circ \cdots \circ s_1 \, | \, \mbox{$n\geq 0$, $s_i$ is a pullback of an element in $S$ for all $i=1,\dots n$}\}.\]

This construction is similar to \cite[Lemma 3.6]{MORSVZ}. 
\end{remark}

We omit the dual construction for cotransfer systems since we do not use it  in this article.

\begin{example} Consider  the following subset $S$ of  $[2]\times[1]$.
\[
    \begin{tikzpicture}[>=stealth, bend angle=20, baseline=(current bounding box.center)]
    \fill (0,0) circle (2pt);
    \fill (1.5,0) circle (2pt);
    \fill (3,0) circle (2pt);
    \fill (0,1.5) circle (2pt);
    \fill (1.5,1.5) circle (2pt);
    \fill (3,1.5) circle (2pt);

    \node (00) at (0,0){};
    \node (10) at (1.5,0) {};
    \node (20) at (3,0){};
    \node (01) at (0,1.5){};
    \node (11) at (1.5,1.5){};
    \node (21) at (3,1.5){};

    \draw[->] (10)--(20);
    \draw[->] (20)--(21);
    
    \end{tikzpicture}
    \]
    The transfer system $\left<S\right>$ is shown below.
    \[
    \begin{tikzpicture}[>=stealth, bend angle=20, baseline=(current bounding box.center)]
    \fill (0,0) circle (2pt);
    \fill (1.5,0) circle (2pt);
    \fill (3,0) circle (2pt);
    \fill (0,1.5) circle (2pt);
    \fill (1.5,1.5) circle (2pt);
    \fill (3,1.5) circle (2pt);

    \node (00) at (0,0){};
    \node (10) at (1.5,0) {};
    \node (20) at (3,0){};
    \node (01) at (0,1.5){};
    \node (11) at (1.5,1.5){};
    \node (21) at (3,1.5){};

    \draw[->] (00)--(01);
    \draw[->] (10)--(11);
    \draw[->] (10)--(20);
    \draw[->] (10)--(21);
    \draw[->] (20)--(21);
    
    \end{tikzpicture}
    \]
\end{example}

\begin{lemma} \label{lem: joins}
Let $P$ be a lattice, and let $T_1$ and $T_2$ be transfer systems on $P$. Then the join $T_1\vee T_2$ is given by 
\[T_1\vee T_2 =\left<T_1 \cup T_2\right>. \]
Dually, if $K_1$ and $K_2$  are cotransfer systems on $P$, then their join is given by
\[K_1 \vee K_2=(\!( K_1 \cup K_2 )\!).\]
\end{lemma}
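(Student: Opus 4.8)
The statement is purely formal: it asserts exactly that $\langle T_1\cup T_2\rangle$ satisfies the universal property of the least upper bound of $\{T_1,T_2\}$ in the poset $\Tr(P)$, so the plan is to verify that property in three short steps and then transport the argument to $\coTr(P)$ verbatim.

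First I would record that $\langle T_1\cup T_2\rangle$ is genuinely an element of $\Tr(P)$. By the discussion following \Cref{def:join}, it is the intersection of the family of all transfer systems on $P$ containing $T_1\cup T_2$; this family is nonempty because the complete transfer system (all morphisms of $P$) belongs to it, and a finite intersection of transfer systems is again a transfer system, as recorded just before \Cref{def:join} (finiteness of $P$ makes the family finite, though arbitrary intersections work equally well). Hence $\langle T_1\cup T_2\rangle$ is a well-defined transfer system, and by construction $T_1\cup T_2\subseteq\langle T_1\cup T_2\rangle$.

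Second, $\langle T_1\cup T_2\rangle$ is an upper bound for $\{T_1,T_2\}$: we have $T_1\subseteq T_1\cup T_2\subseteq\langle T_1\cup T_2\rangle$, and likewise for $T_2$. Third, it is the \emph{least} upper bound: if $U\in\Tr(P)$ with $T_1\subseteq U$ and $T_2\subseteq U$, then $T_1\cup T_2\subseteq U$, and since $\langle T_1\cup T_2\rangle$ is by definition the smallest transfer system containing $T_1\cup T_2$, we conclude $\langle T_1\cup T_2\rangle\subseteq U$. Therefore $\langle T_1\cup T_2\rangle = T_1\vee T_2$ in $\Tr(P)$. For the cotransfer statement I would run the identical three-step argument, replacing $\langle-\rangle$ by $(\!(-)\!)$, ``transfer system'' by ``cotransfer system'', and the intersection-closure fact for transfer systems by its cotransfer analogue (also recorded before \Cref{def:join}); \Cref{prop:TS-and-coTS-categorically}(2) plays the role of part (1) if one prefers the categorical phrasing.

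There is no substantive obstacle here. The only point worth making explicit—and the one an overly terse argument might skip—is the verification in the first step that $\langle T_1\cup T_2\rangle$ and $(\!( T_1\cup T_2 )\!)$ actually lie in $\Tr(P)$ and $\coTr(P)$ respectively, i.e. that the ``smallest (co)transfer system containing $S$'' truly exists and is a (co)transfer system, so that the claimed formula names an element of the relevant poset rather than merely a collection of relations.
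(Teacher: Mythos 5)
Your proof is correct, and it is essentially the argument the paper relies on: the paper states the lemma without proof, treating it as immediate from the fact (recorded just before \Cref{def:join}) that intersections of (co)transfer systems are (co)transfer systems and that $\left<S\right>$ is by construction the smallest transfer system containing $S$, which is exactly the universal-property verification you spell out. Your explicit check that $\left<T_1\cup T_2\right>$ and $(\!( T_1\cup T_2 )\!)$ really exist as elements of $\Tr(P)$ and $\coTr(P)$ is a reasonable bit of added care, not a deviation.
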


\begin{remark}
Because $T_1$ and $T_2$ are transfer systems and therefore closed under pullbacks, 
by \Cref{rem:generation}, we only need to close $T_1 \cup T_2$ under composition to obtain $T_1 \vee T_2$.  Similarly for cotransfer systems.
\end{remark}

Now that we have defined the meet and join operations, we have proved the following result.

\begin{theorem} The posets $\Tr(P)$ and $\coTr(P)$ are lattices.
\end{theorem}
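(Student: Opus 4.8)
The plan is to verify directly that the poset $\Tr(P)$, ordered by inclusion, has all binary meets and all binary joins, and then to obtain the corresponding statement for $\coTr(P)$ by formal duality. For the meet of two transfer systems $T_1,T_2$, I would check that their intersection $T_1\cap T_2$ is itself a transfer system: as a subrelation of $\le$ it is antisymmetric, it is reflexive since each $T_i$ contains every identity, and it is transitive as an intersection of transitive relations, so it is a partial order refining $\le$; and the second axiom of \Cref{def:TS} is inherited, since if $x(T_1\cap T_2)y$ and $z\le y$ then $(x\wedge z)\mathcal{R}z$ holds for $\mathcal{R}=T_1$ and for $\mathcal{R}=T_2$. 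As $T_1\cap T_2$ is visibly the largest transfer system contained in both, it is their meet.

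For the join, I would first observe that the very same verification shows that an arbitrary intersection of transfer systems on $P$ is again a transfer system. The family of transfer systems containing $T_1\cup T_2$ is nonempty because the complete transfer system belongs to it, so $\langle T_1\cup T_2\rangle$ is a well-defined transfer system; this is precisely the existence argument for $\langle S\rangle$ recorded just after \Cref{def:join}. Since $\langle T_1\cup T_2\rangle$ contains $T_1\cup T_2$ it is an upper bound for $T_1$ and $T_2$, and it is contained in any transfer system containing both, hence it is their least upper bound, which by \Cref{lem: joins} is exactly $T_1\vee T_2$. This shows $\Tr(P)$ is a lattice. The argument for $\coTr(P)$ is identical after replacing \Cref{def:TS} by \Cref{def:coTS}: intersections of cotransfer systems are cotransfer systems, so meets are given by intersection, and joins are given by $(\!( - )\!)$ as in \Cref{lem: joins}.

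The honest answer about the main obstacle is that there is essentially none: the substantive content has already been proved, namely closure of intersections, the existence of $\langle S\rangle$ (equivalently $(\!( - )\!)$), and the identification of joins in \Cref{lem: joins}. The only point requiring care is not to conflate the categorical description in \Cref{prop:TS-and-coTS-categorically} with the order-theoretic one: a (co)transfer system is by definition a \emph{partial order}, so when checking that intersections remain (co)transfer systems one must also confirm the partial-order axioms, even though each is immediate. I would also remark in passing that $\Tr(P)$ and $\coTr(P)$ are in fact complete lattices, since each has a top element and all meets, though only the binary case is needed for the statement.
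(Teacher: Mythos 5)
Your proposal is correct and follows essentially the same route as the paper: meets are given by intersections (checking the transfer-system axioms directly), and joins exist via the generated transfer system $\left<T_1\cup T_2\right>$ as in \Cref{lem: joins}, with the dual argument for $\coTr(P)$. The extra detail you supply (verifying the partial-order axioms for intersections and the least-upper-bound property) simply fills in steps the paper leaves implicit.
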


\begin{example}
The lattice of the five transfer systems on $[2]$ is a pentagon.
More generally, $\Tr([n])$ is the Tamari lattice, \emph{i.e.}, the $0$ and $1$-cells of the associahedron for multiplying $n+2$ elements \cite[Theorem 25]{NinftyOperads}.
\end{example}

We end this section by discussing the collection of (co)transfer systems of a finite lattice $P$ that lie within a given wide subcategory $Q$ of $P$.

\begin{lemma}\label{lem:vee-in-W}
    Let $Q$ be a wide  subcategory of a finite lattice $P$. If $T_1,T_2\subseteq Q$ are transfer systems, then $T_1 \vee T_2 \subseteq Q$. Dually, if $K_1,K_2 \subseteq Q$ are cotransfer systems, then $K_1 \vee K_2 \subseteq Q$.
\end{lemma}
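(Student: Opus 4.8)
The plan is to reduce everything to the explicit generation procedure of \Cref{rem:generation} together with the defining property of a subcategory (closure under composition). By \Cref{lem: joins} we have $T_1 \vee T_2 = \left< T_1 \cup T_2 \right>$, so it suffices to prove $\left< T_1 \cup T_2 \right> \subseteq Q$.

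First I would record that $S := T_1 \cup T_2$ is already closed under pullbacks: any pullback of a morphism of $S$ is a pullback of a morphism of $T_1$ or of $T_2$, and each $T_i$, being a transfer system, is closed under pullbacks by \Cref{prop:TS-and-coTS-categorically}; hence such a pullback lies again in $T_1 \cup T_2 = S$. By \Cref{rem:generation}, every morphism of $\left< S \right>$ is then a finite composite $s_n \circ s_{n-1} \circ \cdots \circ s_1$ with each $s_i \in S$ (the empty composite, $n = 0$, being an identity). One could equally well avoid the preliminary observation and invoke the full description of $\left< S \right>$ from \Cref{rem:generation} directly, using only that every pullback of an element of $S$ lies in $S \subseteq Q$.

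Next I would use that $S = T_1 \cup T_2 \subseteq Q$ by hypothesis, and that $Q$, being a (wide) subcategory of $P$, contains all identities and is closed under composition. Hence any composite $s_n \circ \cdots \circ s_1$ of morphisms of $S$ lies in $Q$, which gives $\left< T_1 \cup T_2 \right> \subseteq Q$, i.e. $T_1 \vee T_2 \subseteq Q$.

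For the dual statement I would pass to the opposite lattice: $Q^{\mathrm{op}}$ is a wide subcategory of $P^{\mathrm{op}}$, a cotransfer system on $P$ is the same thing as a transfer system on $P^{\mathrm{op}}$, and the join in $\coTr(P)$ agrees with the join in $\Tr(P^{\mathrm{op}})$ by \Cref{lem: joins}; so the cotransfer case is immediate from the transfer case applied to $P^{\mathrm{op}}$. (Alternatively, one runs the same argument with the dual of \Cref{rem:generation}, closing $K_1 \cup K_2$ under pushouts and then composition.) There is no serious obstacle in this proof; the only point requiring a moment's care is the justification for using the simplified generation procedure, which is exactly the closure-under-pullbacks remark above.
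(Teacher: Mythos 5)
Your proof is correct and follows essentially the same route as the paper: both reduce to the observation that, since $T_1$ and $T_2$ are already closed under pullbacks, the join $T_1\vee T_2=\left<T_1\cup T_2\right>$ is obtained by closing $T_1\cup T_2$ under composition alone, which stays inside the subcategory $Q$; the dual case is handled the same way. Your write-up simply makes explicit the pullback-closure check and the opposite-lattice formulation of duality that the paper leaves implicit.
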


\begin{proof}
    Since $T_1\vee T_2$ is constructed by closing $T_1\cup T_2$ under composition and $Q$ is a subcategory,  $T_1\vee T_2$ must be contained in $Q$. The dual proof is the same.
\end{proof}

\begin{definition}[$\Tmax$ and $\Cmax$]\label{def:Tmax} Let $Q$ be a wide  subcategory of a finite lattice $P$. Define $\Tmax$ to be the join of all transfer systems on $P$ that are contained in $Q$. Dually, define $\Cmax$ to be the join of all cotransfer systems on $P$ that are contained in $Q$.
\end{definition}

While $\Tmax$ and $\Cmax$ depend on $Q$, we do not include $Q$ in the notation since in subsequent sections we  work with a fixed $Q$. By construction, $\Tmax$ is a maximum along the collection of transfer systems on $P$ contained within $Q$, and similarly for $\Cmax$. This property is leveraged in Sections \ref{sec:FromTStoMS} and \ref{sec:centers}.

\begin{remark}\label{rem:maximality}
Given a morphism $f$ that is in $Q$ but not in $\Tmax$, the transfer system on $P$ generated by $T_{max}$ and $f$, $\left<\Tmax \cup \{f\}\right>$,  by maximality is not completely contained in $Q$. Therefore any morphism in $Q$ that is closed under pullbacks must be in $\Tmax$.
\end{remark}

 The transfer system $\Tmax$ in $Q$ satisfies further useful properties when $Q$ is also decomposable. 

\begin{definition}
    A \emph{decomposable subcategory} of a category $\mathcal{C}$ is a subcategory $\mathcal{D}$ such that for all composable morphisms $f,g\in \mathcal{C}$ such that $g\circ f$ is in $\mathcal{D}$, then both $f$ and $g$ are in $\mathcal{D}$.
\end{definition}

In the case of $Q$ being decomposable, $\Tmax$ is a \emph{saturated} transfer system and $\Cmax$ is a saturated cotransfer system. This property will be helpful when computing $\Tmax$ and $\Cmax$ for a given $Q$. The terminology ``saturated'' was introduced in \cite{RubinDetectingOperads}, where it was established that the transfer systems associated to linear isometries operads are saturated.

\begin{definition}\label{defn:saturated}
A transfer system $T$ on a finite lattice $P$ is \emph{saturated} if whenever $x \leq y \leq z$ and $x \rightarrow z$ is in $T$, then so is $y \rightarrow z$. A cotransfer system $K$ on a finite lattice $P$ is \emph{saturated} if whenever $x \leq y \leq z$ and $x \rightarrow z$ is in $K$, then so is $x \rightarrow y$.
\end{definition}

\begin{remark}\label{rem:saturated=decomposable}
A (co)transfer system is saturated if and only if it is decomposable. Indeed, if $x\leq y \leq z$ and $x\to z$ is in a transfer system $T$, closure under pullbacks implies $x\to y$ is in $T$. Similarly, for a cotransfer system $K$, since $x \rightarrow z$ implies $y \rightarrow z$ in $K$.
\end{remark}

The above gives us the following result.

\begin{proposition}\label{prop:Tmax-saturated} Given a wide decomposable subcategory $Q$ of a finite poset $P$,
the transfer system $\Tmax$ and the cotransfer system $\Cmax$ are saturated.
\end{proposition}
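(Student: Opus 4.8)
The plan is to deduce saturation from \Cref{rem:maximality}, which says that any morphism of $Q$ all of whose pullbacks lie in $Q$ already belongs to $\Tmax$ (with the evident dual statement for $\Cmax$ and pushouts). I will carry out the argument for $\Tmax$; the case of $\Cmax$ is the formal dual, interchanging pullbacks with pushouts and meets with joins.

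First I would record what is already known about $\Tmax$: being a join of transfer systems in the lattice $\Tr(P)$ it is again a transfer system, hence closed under pullbacks by \Cref{prop:TS-and-coTS-categorically}, and it is contained in $Q$ by \Cref{lem:vee-in-W}. Now take $x \le y \le z$ in $P$ with the morphism $x \to z$ in $\Tmax$; the goal is to show $y \to z \in \Tmax$. Writing $x \to z = (y \to z) \circ (x \to y)$ and using that $x \to z \in Q$, decomposability of $Q$ immediately gives $y \to z \in Q$ (and $x \to y \in Q$). By \Cref{rem:maximality} it then remains only to check that every pullback of $y \to z$ lies in $Q$.

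The key step is the following comparison. In a finite lattice every pullback of $y \to z$ is of the form $(y \wedge w) \to w$ for some $w \le z$, obtained by pulling back along $w \to z$. Pulling $x \to z$ back along the same $w \to z$ gives $(x \wedge w) \to w$, which lies in $\Tmax$, and hence in $Q$, because $\Tmax$ is closed under pullbacks. Since $x \le y$ we have $x \wedge w \le y \wedge w \le w$, so
\[
(x \wedge w \to w) = (y \wedge w \to w) \circ (x \wedge w \to y \wedge w),
\]
and applying decomposability of $Q$ to the morphism $x \wedge w \to w \in Q$ shows $y \wedge w \to w \in Q$. Thus all pullbacks of $y \to z$ lie in $Q$, and \Cref{rem:maximality} yields $y \to z \in \Tmax$. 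This proves $\Tmax$ is saturated, and the dual argument --- replacing $(x \wedge w) \to w$ by the pushout $w \to (y \vee w)$ of $x \to y$ and comparing it with the pushout $w \to (z \vee w)$ of $x \to z$ --- shows $\Cmax$ is saturated.

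I expect the only genuinely delicate point to be the first sentence of the last paragraph: one must be sure that the morphisms $(y \wedge w) \to w$ for $w \le z$ are \emph{all} the pullbacks of $y \to z$ in the lattice (dually for pushouts), so that verifying this family lands in $Q$ really does trigger \Cref{rem:maximality}. Everything else is bookkeeping with the two factorizations above together with repeated use of decomposability of $Q$.
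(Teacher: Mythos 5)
Your argument is correct and is essentially the paper's own proof: the paper also reduces to showing that every pullback of $y \to z$ along some $a \to z$ lies in $Q$, compares it with the pullback $a \wedge x \to a$ of $x \to z$ (which is in $\Tmax$), and applies decomposability of $Q$, concluding via maximality that $\left<\Tmax \cup \{y \to z\}\right> \subseteq \Tmax$. The only cosmetic difference is that the paper spells out why $\left<\Tmax \cup \{f\}\right>$ stays in $Q$ (pullback closure followed by composition closure, using that $Q$ is a subcategory), which is exactly the justification behind the final sentence of \Cref{rem:maximality} that you invoke directly.
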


\begin{proof}
Assume $x\leq y \leq z$ and $x \rightarrow z$ is in $\Tmax$. Let $f\colon y \rightarrow z$. Since $Q$ is decomposable, it follows that $f$ is in $Q$.  We will show $f$ is in $\Tmax$ by showing that  $\left< \Tmax \cup \{f\} \right>$  is  contained in $Q$, and using \Cref{rem:maximality}.

Since $\left< \Tmax \cup \{f\} \right>$ is obtained by closing the set $\Tmax \cup \{f\}$ first under pullbacks and then under compositions, and $Q$ is closed under compositions, it suffices then to show that $\left< \Tmax \cup \{f\} \right>$ is closed under pullbacks. Any pullback of $f$ along a map $a \rightarrow z$ fits into the following diagram.
\[
\xymatrix{ a \ar[r] & z \\
 a \wedge y \ar[r] \ar[u] & y \ar[u]^f \\
  a \wedge x\ar[r] \ar[u] \ar@/^2pc/[uu] &  x \ar[u] \ar@/_2pc/[uu] \\
}
\]

As the arrow $x \rightarrow z$ is in $\Tmax$, so is its pullback $a \wedge x \rightarrow a$. In particular, $a \wedge x \rightarrow a$ is in $Q$ and since $Q$ is decomposable, then   $a \wedge y \rightarrow a$ must also be in $Q$. Thus, we see that pullbacks of elements of $\Tmax \cup \{f\}$ are in $Q$.

We conclude that $\left< \Tmax \cup \{f\} \right>$ is a transfer system in $Q$, which by maximality implies that $\left< \Tmax \cup \{f\} \right> \subseteq \Tmax$, thus they are equal and $\Tmax$ is saturated.

The proof for $\Cmax$ is similar. 
\end{proof}

Saturation of $\Tmax$ implies that it can be constructed from the ``short arrows" in $Q$, which in poset theory, are called \emph{covering relations}.

\begin{definition}\label{def:short-arrows}
A morphism $\sigma$ in a lattice $P$ is called a \emph{short arrow} if $\sigma=\sigma_2 \circ \sigma_1$ implies that either $\sigma_1$ or $\sigma_2$ is the identity.
\end{definition}

\begin{proposition}\label{prop:Tmax-closure} Let $Q$ be a wide  subcategory of a finite lattice $P$, and let $S$ be the set
\[
S=\{ f \in Q \,\,|\,\, f \,\,\mbox{is a short arrow whose pullbacks are in $Q$ or an isomorphism}\}.
\]
Then  the transfer system $\Tmax$ is the closure of the set $S$ under composition.
\end{proposition}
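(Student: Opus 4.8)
The plan is to show that the composition closure $\overline{S}$ of $S$ coincides with $\Tmax$, by establishing the two inclusions separately. Throughout I would use three facts: that $\Tmax$ is contained in $Q$ (being a finite join of transfer systems contained in $Q$, it lies in $Q$ by iterating \Cref{lem:vee-in-W}); that $\Tmax$ is closed under pullbacks, since it is a transfer system (\Cref{prop:TS-and-coTS-categorically}); and, crucially, that $\Tmax$ is saturated by \Cref{prop:Tmax-saturated}.

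For $\overline{S}\subseteq \Tmax$ it suffices to prove $S\subseteq \Tmax$, since $\Tmax$ is a subcategory and hence closed under composition. Isomorphisms lie in every transfer system. If $f\in Q$ is a short arrow all of whose pullbacks lie in $Q$, I would deduce $f\in\Tmax$ from \Cref{rem:maximality}: by \Cref{rem:generation}, $\left<\Tmax\cup\{f\}\right>$ is formed by closing $\Tmax\cup\{f\}$ first under pullbacks and then under composition; pullbacks of elements of $\Tmax$ remain in $\Tmax\subseteq Q$, the pullbacks of $f$ are in $Q$ by hypothesis, and $Q$ is closed under composition, so $\left<\Tmax\cup\{f\}\right>\subseteq Q$; maximality of $\Tmax$ among transfer systems contained in $Q$ then forces $\left<\Tmax\cup\{f\}\right>=\Tmax$, so $f\in\Tmax$.

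For $\Tmax\subseteq\overline{S}$, take an arrow $f\colon a\to b$ in $\Tmax$. Since $P$ is a finite lattice we may choose a maximal chain $a=c_0\lessdot c_1\lessdot\cdots\lessdot c_m=b$, so that $f=\sigma_m\circ\cdots\circ\sigma_1$ with each $\sigma_i\colon c_{i-1}\to c_i$ a covering relation, i.e.\ a short arrow in the sense of \Cref{def:short-arrows} (the case $m=0$ being $f$ an isomorphism, already in $S$). I claim each $\sigma_i$ lies in $S$. Pulling $f$ back along $c_i\to b$ yields the arrow $a\wedge c_i\to c_i$, which equals $c_0\to c_i$ because $c_0=a\le c_i$; hence $c_0\to c_i\in\Tmax$. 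Since $c_0\le c_{i-1}\le c_i$ and $\Tmax$ is saturated, $\sigma_i=(c_{i-1}\to c_i)\in\Tmax$; in particular $\sigma_i\in Q$, and all pullbacks of $\sigma_i$ lie in $\Tmax\subseteq Q$. Therefore $\sigma_i\in S$, and $f\in\overline{S}$, which finishes the argument.

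The pullback bookkeeping here is routine, since in a poset category pullbacks are computed by meets. The single essential point — and the place a naive argument would break — is the saturation of $\Tmax$: for a general wide subcategory a transfer system need not be generated under composition by its covering relations, so \Cref{prop:Tmax-saturated}, which is exactly where decomposability of $Q$ enters, is what makes the identification $\overline{S}=\Tmax$ go through.
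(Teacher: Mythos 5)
Your proof is correct and follows essentially the same route as the paper: both inclusions hinge on the maximality of $\Tmax$ (a generated transfer system shown to land inside $Q$ via \Cref{rem:generation}) and on the saturation of $\Tmax$ from \Cref{prop:Tmax-saturated}, exactly as in the paper's argument. The only cosmetic differences are that you generate with $\left<\Tmax\cup\{f\}\right>$ where the paper uses $\left<\{f\}\right>$ directly, and you unpack saturation through an explicit meet/pullback computation along a maximal chain where the paper simply invokes \Cref{rem:saturated=decomposable}.
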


\begin{proof}
    Let $f\in S$, meaning a short arrow in $Q$ such that all its pullbacks are also in $Q$. Then the transfer system $\left< \{ f\} \right>$ is the closure under composition of a set that lies within $Q$, and since $Q$ is a subcategory, we have that $\left< \{ f\} \right>$ is contained in $Q$. It follows that $\left< \{ f\} \right> \subseteq \Tmax$, and in particular, $f\in \Tmax$. Thus $\Tmax$ contains the set $S$, and therefore it contains the closure under composition of S.

    Conversely, let $f\in \Tmax$. By \cref{prop:Tmax-saturated} and \cref{rem:saturated=decomposable} $f=\sigma _n \circ \dots \circ \sigma_1$ where each $\sigma_i$ is a short arrow and $\sigma_i \in T_{max}$ for all $i$. Since $\Tmax$ is closed under pullbacks and is contained within $Q$, all pullbacks of each $\sigma_i$ are in $Q$. It follows that $\sigma_i\in S$, and hence $f$ is contained in the closure under composition of $S$. 
\end{proof}

A dual argument proves the following.

\begin{proposition}\label{prop:Kmax-closure} Let $Q$ be a wide  subcategory of a finite lattice $P$, and let $S$ be the set
\[
S=\{ f \in Q \,\,|\,\, f \,\,\mbox{is a short arrow whose pushouts are in $Q$ or an isomorphism}\}.
\]
Then the cotransfer system $\Cmax$ is the closure of the set $S$ under composition.
\end{proposition}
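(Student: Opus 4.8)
The plan is to dualize the proof of \Cref{prop:Tmax-closure} essentially line by line, replacing pullbacks by pushouts, $\Tmax$ by $\Cmax$, and saturation-via-pullbacks by saturation-via-pushouts as recorded in \Cref{rem:saturated=decomposable}. Throughout I would rely on: (1) \Cref{prop:Tmax-and-coTS-categorically}, which says cotransfer systems are exactly the wide subcategories closed under pushouts; (2) the dual of \Cref{rem:generation}, namely that $(\!(S)\!)$ is obtained from a pushout-and-isomorphism-closed set $S$ by closing under composition; and (3) \Cref{prop:Tmax-saturated}, which gives that $\Cmax$ is saturated, hence by \Cref{rem:saturated=decomposable} decomposable, hence every morphism of $\Cmax$ factors as a composite of short arrows each lying in $\Cmax$.

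For the first inclusion, let $f \in S$, so $f$ is a short arrow in $Q$ all of whose pushouts lie in $Q$ (or $f$ is an isomorphism, which is trivially a short arrow with all pushouts in the wide subcategory $Q$). Then $(\!(\{f\})\!)$ is the closure under composition of the pushout-closure of $\{f\}$; since every pushout of $f$ lies in $Q$ and $Q$ is a subcategory closed under composition, we get $(\!(\{f\})\!) \subseteq Q$. As $(\!(\{f\})\!)$ is a cotransfer system contained in $Q$, it is one of the cotransfer systems whose join defines $\Cmax$, so $(\!(\{f\})\!) \subseteq \Cmax$ and in particular $f \in \Cmax$. Hence $S \subseteq \Cmax$, and since $\Cmax$ is closed under composition, the closure under composition of $S$ is contained in $\Cmax$.

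For the reverse inclusion, let $f \in \Cmax$. By \Cref{prop:Tmax-saturated} and \Cref{rem:saturated=decomposable}, $\Cmax$ is decomposable, so we may write $f = \sigma_n \circ \cdots \circ \sigma_1$ with each $\sigma_i$ a short arrow and each $\sigma_i \in \Cmax$. Because $\Cmax$ is a cotransfer system it is closed under pushouts, so every pushout of each $\sigma_i$ lies in $\Cmax \subseteq Q$. Thus each $\sigma_i$ satisfies the defining condition of $S$, so $\sigma_i \in S$, and therefore $f$ lies in the closure under composition of $S$. Combining the two inclusions gives that $\Cmax$ equals the closure of $S$ under composition.

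The only genuinely non-routine ingredient is the claimed dual of \Cref{rem:generation}; the excerpt explicitly says the dual construction for cotransfer systems is omitted. I would either (a) note that it follows from \cite[Theorem A.2]{RubinDetectingOperads} by the self-duality $\Tr(P) \cong \coTr(P^{\mathrm{op}})$ exchanging pullbacks and pushouts, or (b) sidestep it entirely by using only that $\Cmax$ is a pushout-closed, composition-closed wide subcategory containing $S$ (for the first inclusion, one checks directly that the pushout-and-composition closure of $S$ is a cotransfer system inside $Q$) together with \Cref{prop:Tmax-saturated} (for the second inclusion). Since $\Cmax$ being saturated is already proven, the bulk of the argument is immediate; the main obstacle is simply making sure the omitted dual generation statement is invoked cleanly rather than re-derived in detail.
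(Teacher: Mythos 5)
Your proof is correct and takes essentially the same approach as the paper: the paper establishes \Cref{prop:Kmax-closure} simply by declaring it dual to \Cref{prop:Tmax-closure}, and your argument is exactly that line-by-line dualization (pushouts for pullbacks, $\Cmax$ for $\Tmax$, with saturation from \Cref{prop:Tmax-saturated}). Your explicit treatment of the omitted dual of \Cref{rem:generation} --- either by duality or by checking directly that pushouts of composites are composites of pushouts --- just fills in the one detail the paper leaves implicit.
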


\section{Weak Factorization Systems and Model Structures}\label{sec:modelbackground}

Weak factorization systems arise naturally in  many areas of category theory, homotopy theory, and algebra. In the context of transfer systems and this article, weak factorization systems are the  preferred way to present model category structures. Weak factorization systems are understood in terms of lifting properties, so we begin with the following definitions.

\begin{definition}
    For any two morphisms $i\colon a \to b$ and $p \colon x \to y$ in a category $\mathcal{C}$, we say that $i$\emph{ has the left lifting property (LLP) with respect to} $p$, or $p$\emph{ has the right lifting property (RLP) with respect to} $i$, denoted $i \boxslash p$, if for all commutative squares as shown below there exists a lift $h\colon b \to x$ which makes the resulting diagram commute. 
\centerline{
\begin{tikzpicture}
    \node (a) at (0,2) {$a$};
    \node (x) at (2,2) {$x$};
    \node (b) at (0,0) {$b$};
    \node (y) at (2,0) {$y$};

    \draw[->] (a) -- (x);
    \draw[->] (a) -- (b) node[midway, left] {$i$};
    \draw[->] (x) -- (y) node[midway, right] {$p$};
    \draw[->] (b) -- (y);
    \draw[dashed,->] (b) -- (x) node[midway, right] {$\exists h$};
\end{tikzpicture}
}
For any class $\mathcal{S}$ of morphisms in $\mathcal{C}$ we write
\begin{align*}
    \mathcal{S}^\boxslash = \{ g \in \text{Mor}(\mathcal{C}) \, & | \, f \boxslash g \text{ for all } f \in \mathcal{S} \}, \text{ and} \\
    {}^\boxslash\mathcal{S} = \{ f \in \text{Mor}(\mathcal{C}) \, & | \, f \boxslash g \text{ for all } g \in \mathcal{S} \}.
\end{align*}
Given two classes of morphisms $\mathcal{S}$ and $\mathcal{T}$, $\mathcal{S} \subseteq {}^\boxslash \mathcal{T}$ if and only if $\mathcal{T} \subseteq \mathcal{S}^\boxslash$. We write $\mathcal{S} \boxslash \mathcal{T}$ when this holds.
\end{definition}

\begin{definition}
    A \emph{weak factorization system} on a category $\mathcal{C}$ consists of a pair of classes of morphisms $(L,R)$  such that:
    \begin{enumerate}
        \item every morphism $f\in\mathcal{C}$ can be factored as $f=p\circ i$ where $i\in L$ and $p\in R$, and
        \item $L={}^\boxslash R$ and $R=L^\boxslash$.
    \end{enumerate}
    Given a weak factorization system $(L,R)$, we call $L$ the \textit{left set} and $R$ the \textit{right set}.
\end{definition}

\begin{definition}
We denote the collection of weak factorization systems on $\mathcal{C}$ by $\WFS(\mathcal{C})$.
    This collection  has a partial order given by inclusion of the right set, \emph{i.e.}, $(L,R) \leq (L',R')$ if and only if $R \subseteq R'$. Since $L'={}^\boxslash R'$ and $L={}^\boxslash R$, it follows that $R \subseteq R'$ if and only if $L'\subseteq L$.  
\end{definition}

  Given a category $\mathcal{C}$ and a weak factorization system $(L,R)$, the left set $L$ is  closed under pushouts and contains all isomorphisms, as $L={}^\boxslash R$. Dually, $R$ is  closed under pullbacks and contains all isomorphisms, as $R=L^\boxslash$. Thus, in the case of a finite lattice $P$, if $(L,R)$ is a weak factorization system, by \Cref{prop:TS-and-coTS-categorically},   $R$ is a transfer system and $L$ is a cotransfer system. The following result tells us that in the case of a finite lattice, every transfer system and every cotransfer system is part of a weak factorization system.

\begin{theorem}[{\cite[Theorem 4.13]{LatticeVia}}]\label{rem:halftransfer}
Let $P$ be a finite lattice. Then the assignment $(L,R) \mapsto R$ gives an isomorphism of posets 
\[\WFS(P) \longrightarrow \Tr(P),\]
with inverse $T\mapsto ({}^\boxslash T, T)$. 

A dual statement holds for cotransfer systems, with the assignment $(L,R) \to L$ giving an order-reversing bijection
\[\WFS(P) \longrightarrow \coTr(P),\]
with inverse $K \mapsto (K,K^\boxslash)$.

We thus have the following bijections, where the bijections between $\WFS(P)$ and $\Tr(P)$ are order-preserving and all others are order-reversing.
\[
\xymatrix{ ({}^\boxslash T,T)  &  & \WFS(P) \ar[dl] \ar[dr] & & (K,K^\boxslash) \\
T \ar@{|->}[u]  & \Tr(P) \ar@<1ex>[rr]^{{}^\boxslash (-)} \ar[ur] & & \coTr(P) \ar@<1ex>[ll]^{(-)^\boxslash} \ar[ul] & K \ar@{|->}[u] \\
}
\]
    
\end{theorem}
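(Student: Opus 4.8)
The plan is to reduce everything to one substantive point: that the assignment $(L,R)\mapsto R$ is surjective onto $\Tr(P)$. This map is well-defined by the discussion preceding the statement (any left class of a weak factorization system is a wide subcategory closed under pushouts, hence a cotransfer system, and dually for the right class, via \Cref{prop:TS-and-coTS-categorically}), and it is injective because the axiom $L={}^\boxslash R$ means that $R$ recovers $(L,R)$. So I would concentrate on showing that for an arbitrary transfer system $T$ on $P$, the pair $({}^\boxslash T,T)$ is a weak factorization system; this splits into (i) every morphism of $P$ factors as $p\circ i$ with $i\in{}^\boxslash T$ and $p\in T$, and (ii) $({}^\boxslash T)^\boxslash=T$.

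For (i) I would build the factorization by hand. Given $f\colon a\to b$, consider $\Sigma=\{x\in P\mid a\le x\le b,\ (x\to b)\in T\}$. This is nonempty since $b\in\Sigma$, and it is closed under meets: if $x,x'\in\Sigma$, then axiom (2) of \Cref{def:TS} applied to $x\to b$ and $x'\le b$ gives $(x\wedge x'\to x')\in T$, and composing with $x'\to b$ (using that $T$ is a subcategory) puts $x\wedge x'$ in $\Sigma$. Since $P$ is finite, $\Sigma$ has a minimum $c$, yielding $f=(c\to b)\circ(a\to c)$ with $c\to b\in T$. To check $a\to c\in{}^\boxslash T$, note that a lift in the lifting problem posed by some $(x\to y)\in T$ with $a\le x$ and $c\le y$ exists precisely when $c\le x$; but axiom (2) applied to $x\to y$ and $c\le y$ gives $(x\wedge c\to c)\in T$, composition with $c\to b$ shows $x\wedge c\in\Sigma$, and minimality of $c$ forces $c\le x\wedge c\le x$.

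For (ii), $T\subseteq({}^\boxslash T)^\boxslash$ is formal from the definitions. For the reverse inclusion, I would take $p\colon x\to y$ in $({}^\boxslash T)^\boxslash$, factor it via (i) as $x\xrightarrow{i}c\xrightarrow{q}y$ with $i\in{}^\boxslash T$ and $q\in T$, and solve the lifting problem given by the square with edges $\id_x$ (top), $i$ (left), $p$ (right), $q$ (bottom): the lift is a morphism $c\to x$, i.e.\ $c\le x$, which together with $x\le c$ forces $c=x$ and hence $p=q\in T$. This completes the proof that $({}^\boxslash T,T)$ is a weak factorization system, that $(L,R)\mapsto R$ is a bijection with inverse $T\mapsto({}^\boxslash T,T)$, and that it is an order isomorphism, since both partial orders are inclusion of the right class.

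For the dual statement I would pass to $P^{\mathrm{op}}$, which is again a finite lattice; under this, cotransfer systems on $P$ become transfer systems on $P^{\mathrm{op}}$, and $(L,R)\mapsto(R^{\mathrm{op}},L^{\mathrm{op}})$ identifies $\WFS(P)$ with $\WFS(P^{\mathrm{op}})$. The first part then yields the order-reversing bijection $\WFS(P)\to\coTr(P)$, $(L,R)\mapsto L$, with inverse $K\mapsto(K,K^\boxslash)$, the order-reversal coming from $R\subseteq R'\iff{}^\boxslash R'\subseteq{}^\boxslash R$; the full commutative diagram of bijections then follows by composition. The main obstacle is step (i): the intermediate object in the factorization must be produced explicitly, and the crucial idea is to take it to be the minimum of the meet-closed set $\Sigma$, where finiteness of $P$ is essential.
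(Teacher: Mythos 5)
Your argument is correct, but there is nothing in this paper to compare it against: the paper does not prove \Cref{rem:halftransfer} at all, it simply imports it as \cite[Theorem 4.13]{LatticeVia}. What you have written is therefore a self-contained reproof of the cited result, and it holds up. The reduction to surjectivity is right (well-definedness follows from the standard closure properties of lifting classes together with \Cref{prop:TS-and-coTS-categorically}, and injectivity from $L={}^\boxslash R$); the substantive content is your construction of the factorization, and the key step — taking the intermediate object $c$ to be the minimum of the meet-closed set $\Sigma=\{x\mid a\le x\le b,\ (x\to b)\in T\}$, with closure under meets supplied by axiom (2) of \Cref{def:TS} plus transitivity of $T$ — is exactly what makes both halves work: minimality of $c$ gives $a\to c\in{}^\boxslash T$, and the poset-retract argument ($c\le x$ and $x\le c$ force $c=x$) gives $({}^\boxslash T)^\boxslash\subseteq T$. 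It is worth noting that your description of ${}^\boxslash T$ implicit in this construction agrees with the explicit formula ${}^\boxslash T=\mathcal{E}_d(T)^c$ of \Cref{prop:liftformula}: $a\to c$ lies in ${}^\boxslash T$ precisely because no proper intermediate $x<c$ has $x\to b$ (hence $x\to c$) in $T$, which is the complement-of-downward-extension condition. Your handling of the dual statement by passing to $P^{\mathrm{op}}$, with the order reversal coming from $R\subseteq R'\iff L'\subseteq L$, is also fine.
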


In order to construct a weak factorization system on a finite lattice $P$, we can start with a transfer system $T$ and construct  ${}^\boxslash T$, or dually, we can start with a cotransfer system $K$ and construct $K^\boxslash$. Explicit constructions of these were developed in \cite{LatticeVia} using the downward extension, which we now define.

\begin{definition}[Downward extension]\cite[Definition 4.14]{LatticeVia} Let $T$ be a transfer system on a finite lattice $P$. We define the \emph{downward extension} of $T$ to be
    \[
    \mathcal{E}_d(T) = \{ z \to y \, | \, \text{there exists} \, \, x \in P \, \, \text{such that} \, ,\ z \leq x < y \, \, \text{and} \, \, x \to y \in T \}.
    \]
\end{definition}
\begin{example}\label{ex:downward-extension} On $[m]\times [n]$, the downward extension of a transfer system $T$ consists of all non-identity morphisms in $T$ along with all morphisms that are obtained by dragging the source of each morphism of $T$ down and to the left, while keeping the target stationary. For example, the following diagram shows a transfer system on $[2]\times[1]$ and its respective downward extension.

\[
    \begin{minipage}{0.45\textwidth}
    \centering
        \begin{tikzpicture}[>=stealth, bend angle=20, baseline=(current bounding box.center)]
    \fill (0,0) circle (2pt);
    \fill (1.5,0) circle (2pt);
    \fill (3,0) circle (2pt);
    \fill (0,1.5) circle (2pt);
    \fill (1.5,1.5) circle (2pt);
    \fill (3,1.5) circle (2pt);
    
    \node (00) at (0,0) {};
    \node (10) at (1.5,0) {};
    \node (20) at (3,0) {};
    \node (01) at (0,1.5) {};
    \node (11) at (1.5,1.5) {};
    \node (21) at (3,1.5) {};

    \draw[->] (00)--(01);
    \draw[->] (10)--(11);
    \draw[->] (20)--(21);
    \draw[->, white, bend right] (00) to (20); 
\end{tikzpicture}

$T$
    \end{minipage}
    \begin{minipage}{0.45\textwidth}
    \centering
        \begin{tikzpicture}[>=stealth, bend angle=20, baseline=(current bounding box.center)]
    \fill (0,0) circle (2pt);
    \fill (1.5,0) circle (2pt);
    \fill (3,0) circle (2pt);
    \fill (0,1.5) circle (2pt);
    \fill (1.5,1.5) circle (2pt);
    \fill (3,1.5) circle (2pt);
    
    \node (00) at (0,0) {};
    \node (10) at (1.5,0) {};
    \node (20) at (3,0) {};
    \node (01) at (0,1.5) {};
    \node (11) at (1.5,1.5) {};
    \node (21) at (3,1.5) {};

    \draw[->] (00)--(01);
    \draw[->] (00)--(21);
    \draw[->] (00)--(11);
    \draw[white, line width=4pt] (10)--(11);
    \draw[->] (10)--(11);
    \draw[->] (10)--(21);
    \draw[->] (20)--(21);
    \draw[->,white, bend right] (00) to (20); 
\end{tikzpicture}

$\mathcal{E}_d(T)$
    \end{minipage}
\]
\end{example}
Analogously, we can define an extension for cotransfer systems in the following way.
\begin{definition}[Upward extension] Let $K$ be a cotransfer system on a finite lattice $P$. We define the \emph{upward extension} of $K$ to be
    \[
    \mathcal{E}_u(K) = \{ z \to y \, | \, \text{there exists} \, \, x \in P \, \, \text{such that} \, ,\ z < x \leq y \, \, \text{and} \, \, z \to x \in K \}.
    \]
\end{definition}

The following result gives an explicit way of computing ${}^\boxslash T$ and $K^\boxslash$ using downward and upward extensions.

\begin{proposition}[{\cite[Proposition 4.15]{LatticeVia}}]\label{prop:liftformula}
Let $T$ be a transfer system and $K$ a cotransfer system on a finite lattice $P$, then 
\[
{}^\boxslash T=  \mathcal{E}_d(T)^c \,\,\,\mbox{and}\,\,\,K^\boxslash=\mathcal{E}_u(K)^c.
\]
\end{proposition}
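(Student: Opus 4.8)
The plan is to reduce the statement to elementary order theory by spelling out what the lifting relation $\boxslash$ means inside a poset category. First I would record the following dictionary: for morphisms $i\colon a\to b$ and $p\colon x\to y$ in $P$, a commutative square with $i$ on the left and $p$ on the right exists precisely when $a\le x$ and $b\le y$ (commutativity being automatic, since $P$ is a poset), and a diagonal filler $b\to x$ is exactly a witness that $b\le x$. Hence $i\boxslash p$ holds if and only if the implication ``$a\le x$ and $b\le y$ imply $b\le x$'' holds (vacuously when the hypothesis fails). Consequently
\[
{}^\boxslash T = \{\, (a\to b) \mid b\le x \text{ whenever } (x\to y)\in T, a\le x, b\le y \,\},
\]
and dually $K^\boxslash = \{\, (x\to y) \mid b\le x \text{ whenever } (a\to b)\in K, a\le x, b\le y \,\}$.

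With this dictionary in hand I would establish ${}^\boxslash T=\mathcal{E}_d(T)^c$ by two inclusions. For $\mathcal{E}_d(T)^c\subseteq{}^\boxslash T$: assume $a\to b\notin\mathcal{E}_d(T)$, so there is no $x$ with $a\le x<b$ and $(x\to b)\in T$; given $(x\to y)\in T$ with $a\le x$ and $b\le y$, I must show $b\le x$. If not, then $a\le x\wedge b<b$, while condition (2) of \Cref{def:TS}, applied to $(x\to y)\in T$ and the element $b\le y$, gives $(x\wedge b\to b)\in T$, contradicting $a\to b\notin\mathcal{E}_d(T)$. For the reverse inclusion ${}^\boxslash T\subseteq\mathcal{E}_d(T)^c$: if some $x$ satisfied $a\le x<b$ and $(x\to b)\in T$, then the square with top edge $a\to x$, left edge $a\to b$, right edge $x\to b$, and bottom edge $\id_b$ commutes, so the assumed lifting property of $a\to b$ against $(x\to b)$ produces a filler $b\to x$, i.e.\ $b\le x$, contradicting $x<b$. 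Along the way one checks that identities behave as they should: $\id_a\in\mathcal{E}_d(T)^c$ holds vacuously, and $\id_a\in{}^\boxslash T$ is immediate from the dictionary.

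The cotransfer case $K^\boxslash=\mathcal{E}_u(K)^c$ is the mirror image, obtained by running the same two arguments with meets replaced by joins and sources interchanged with targets: for $\mathcal{E}_u(K)^c\subseteq K^\boxslash$ one uses condition (2) of \Cref{def:coTS}, applied to $(a\to b)\in K$ and the element $x$ with $a\le x$, to get $(x\to b\vee x)\in K$ with $x<b\vee x\le y$; and for the reverse inclusion one uses, in the direction that needs a filler, the square with top edge $\id_x$, left edge $(x\to w)\in K$, right edge $x\to y$, and bottom edge $w\to y$, where $x<w\le y$. I do not anticipate a genuine obstacle here: all of the content lies in translating the categorical factorization and lifting data encoded by \Cref{def:TS} and \Cref{def:coTS} into order relations. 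The one point that needs care is the bookkeeping of strict versus non-strict inequalities, so that identities are correctly excluded from $\mathcal{E}_d(T)$ and $\mathcal{E}_u(K)$ and so that the squares chosen in the two ``$\subseteq$'' directions are genuinely commutative — both of which are immediate in a poset.
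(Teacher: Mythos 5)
Your proof is correct. Note that the paper does not give its own argument for this statement --- it is quoted from \cite[Proposition 4.15]{LatticeVia} --- and your verification is the standard one: in a poset category the relation $i \boxslash p$ for $i\colon a \to b$ and $p\colon x \to y$ reduces to the implication ``$a \le x$ and $b \le y$ imply $b \le x$,'' after which one inclusion follows from axiom (2) of \Cref{def:TS} (resp.\ \Cref{def:coTS}) applied to $x \wedge b$ (resp.\ $b \vee x$), and the other from a single test square against the arrow witnessing membership in $\mathcal{E}_d(T)$ (resp.\ $\mathcal{E}_u(K)$); your handling of the strict inequalities and of identities is also correct, so the argument is complete and agrees in substance with the cited source.
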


We now define model categories. The definition presented here is a reformulation of the original definition by Quillen given by Joyal and Tierney \cite{HoAlgQuillen, JT}.

\begin{definition}\label{def:WFSmodel}
Let $\mathcal{C}$ be a category with all finite limits and colimits.
A \emph{premodel category} on $\mathcal{C}$ consists of two weak factorization systems 
\[
(\AC,\F) \text{ and } (\C, \AF) \,\,\,\mbox{such that}\,\,\, \AC \subseteq \C, \,\,\mbox{or equivalently}\,\,\, \AF \subseteq \F.
\]
If in addition the class of morphisms $\W := \AF \circ \AC$ satisfies the two-out-of-three axiom, then we call the data given by $\W, \AC, \C, \AF$ and $\F$ a \emph{model structure}. We call the morphisms in $\W$ the \emph{weak equivalences}, $\F$  the \emph{fibrations}, $\C$ the \emph{cofibrations}, $\AF$ the \emph{acyclic fibrations}, and $\AC$ the \emph{acyclic cofibrations}.
\end{definition}

Classically, one comes across model categories in the context of the homotopy theory of variants of topological spaces, or algebraic examples such as chain complexes of modules over a ring. Our goal is to focus on the case where the category $\mathcal{C}$ is a finite lattice, which makes the handling of model categories take on a more combinatorial flavor.

\begin{remark}\label{rem:redundancy}
\Cref{def:WFSmodel} contains redundant information, as the definition and the factorization axioms imply the following.
\begin{itemize}
\item $\AC={}^\boxslash \F$ and $\F = \AC^\boxslash$
\item $\C={}^\boxslash \AF$ and $\AF = \C^\boxslash$
\item $\AC = \C \cap \W$ and $\AF = \F \cap \W$
\item $\W = \AF \circ \AC$
\end{itemize}
As a consequence, the entire model category structure is determined by knowing two of the classes, for example, $\W$ and $\AF$, or $\W$ and $\AC$. 
\end{remark}

If all transfer systems on a finite lattice are known, then  we can build all model structures on this lattice as follows. 

\begin{enumerate}
\item For a finite lattice $P$, find all transfer systems $T$ on $P$.
\item Construct all weak factorization systems $({}^\boxslash T, T)$.
\item Determine all pairs of weak factorization systems $({}^\boxslash T_1, T_1)$ and $({}^\boxslash T_2, T_2)$ with $T_2 \subseteq T_1$, or ${}^\boxslash T_1 \subseteq {}^\boxslash T_2$. These pairs give us all premodel structures on $P$ with $({}^\boxslash T_1, T_1)=(\AC,\F)$ and $({}^\boxslash T_2, T_2)=(\C,\AF)$.
\item Those premodel structures where $\W=\AF \circ \AC$ satisfies the two-out-of-three axiom are the model structures on $P$.
\end{enumerate}

\begin{example}
Applying this method to $P=[1]$ results in three  model structures. There are ten  model structures on $P=[2]$. These examples are discussed in detail in \cite[Section 4.2]{ModelStructuresOnFiniteTotalOrders}.
\end{example}
    
\begin{example} There are 23 model structures on $P=[1]\times[1]$. We give one below and recommend finding the remaining 22 as an exercise.
    Let $T_2\subseteq T_1$ be the  following transfer systems. 

\begin{figure}[h!]
    \centering
    \begin{minipage}{0.45\textwidth}
    \centering
        \begin{tikzpicture}[>=stealth, bend angle=30, baseline=(current bounding box.center)]
    \fill (0,0) circle (2pt);
    \fill (1.5,0) circle (2pt);
    \fill (0,1.5) circle (2pt);
    \fill (1.5,1.5) circle (2pt);
    
    \node (00) at (0,0) {};
    \node (10) at (1.5,0) {};
    \node (01) at (0,1.5) {};
    \node (11) at (1.5,1.5) {};

    \draw[->,white, bend right] (00) to (10);
    \draw[->] (00) --(01);
    \draw[->] (00)--(11);
    \draw[->] (00)--(10);
    \draw[->] (01)--(11);
    \draw[->] (10)--(11);
\end{tikzpicture}

$T_1$
    \end{minipage}
    \begin{minipage}{0.45\textwidth}
    \centering
        \begin{tikzpicture}[>=stealth, bend angle=30, baseline=(current bounding box.center)]
    \fill (0,0) circle (2pt);
    \fill (1.5,0) circle (2pt);
    \fill (0,1.5) circle (2pt);
    \fill (1.5,1.5) circle (2pt);
    
    \node (00) at (0,0) {};
    \node (10) at (1.5,0) {};
    \node (01) at (0,1.5) {};
    \node (11) at (1.5,1.5) {};

    \draw[->] (00)--(10);
    \draw[->] (01)--(11);
    \draw[->,white, bend right] (00) to (10);
\end{tikzpicture}

$T_2$
    \end{minipage}
\end{figure}
We use \Cref{prop:liftformula} to find the  left sets of the weak factorization system.

\begin{figure}[h!]
    \centering
    \begin{minipage}{0.45\textwidth}
    \centering
        \begin{tikzpicture}[>=stealth, bend angle=30, baseline=(current bounding box.center)]
    \fill (0,0) circle (2pt);
    \fill (1.5,0) circle (2pt);
    \fill (0,1.5) circle (2pt);
    \fill (1.5,1.5) circle (2pt);
    
    \node (00) at (0,0) {};
    \node (10) at (1.5,0) {};
    \node (01) at (0,1.5) {};
    \node (11) at (1.5,1.5) {};

    \draw[->,white, bend right] (00) to (10);
\end{tikzpicture}

${}^\boxslash T_1$
    \end{minipage}
    \begin{minipage}{0.45\textwidth}
    \centering
        \begin{tikzpicture}[>=stealth, bend angle=30, baseline=(current bounding box.center)]
    \fill (0,0) circle (2pt);
    \fill (1.5,0) circle (2pt);
    \fill (0,1.5) circle (2pt);
    \fill (1.5,1.5) circle (2pt);
    
    \node (00) at (0,0) {};
    \node (10) at (1.5,0) {};
    \node (01) at (0,1.5) {};
    \node (11) at (1.5,1.5) {};

    \draw[->] (00)--(01);
    \draw[->] (10)--(11);
    \draw[->,white, bend right] (00) to (10);
\end{tikzpicture}

${}^\boxslash T_2$
    \end{minipage}
\end{figure}
Since ${}^\boxslash T_1$ is the trivial cotransfer system, the weak equivalence classes correspond to $T_2$. Since $T_2$ satisfies the two-out-of-three axiom, the pairs $({}^\boxslash T_1, T_1)$ and $({}^\boxslash T_2, T_2)$ form a model structure. 
\end{example}

\begin{example}
    Let $P$ be a finite lattice and $T$ a transfer system on $P$. We can pair the weak factorization system $({}^\boxslash T, T)$ with itself to create premodel structure with $T=\AF=\F$ and ${}^\boxslash T=\AC=\C$. By definition of a weak factorization system, every morphism in $P$ can be factored as an acyclic cofibration followed by a fibration, so $\W=\AF \circ \AC=\F \circ \AC = P$. Thus, $\W$ satisfies the 2-out-of-3 property, and the transfer system $T$ together with the weak equivalence set consisting of all morphisms in $P$ defines a model structure on $P$. 
  \end{example}  

  \begin{remark}
      This method for constructing model structures on finite lattices is explored further in \cite{BMO:composition_closed}, where the authors explore the pairs of transfer systems $T_2 \subseteq T_1$ such that the potential weak equivalences $\W=T_2 \circ ({}^\boxslash T_1)$ satisfy the weaker condition of being closed under composition, rather than being closed under two-out-of-three. Their methods are orthogonal to those presented in the remainder of this paper, in which our starting point for constructing model structures is the weak equivalences and acyclic fibrations.
  \end{remark}

Working with a lattice $P$ rather than an arbitrary category simplifies many technical matters. For example, there are no nontrivial retracts in a lattice. We use the following proposition extensively in the subsequent sections. 

\begin{lemma}[{\cite[Proposition 1.8]{DrozZakharevichExtending}}]\label{lem:weak=decomp}
Let $P$ be a lattice and $\W$ be the class of weak equivalences of a model structure on $P$. Then $\W$ is decomposable, \emph{i.e.}, if $f=h \circ g \in \W$ then $h \in \W$ and $g \in \W$. 
\end{lemma}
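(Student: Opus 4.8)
The plan is to reduce to the two ``extreme'' cases---where the weak equivalence is an acyclic fibration or an acyclic cofibration---in which decomposability is forced purely by closure under pullbacks (respectively pushouts), and then to bootstrap to a general weak equivalence using the factorization $\W=\AF\circ\AC$ and one well-chosen meet.

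First, the setup. Since $P$ is a poset-category, if $f=h\circ g\in\W$ with $g\colon a\to b$ and $h\colon b\to c$, then $a\le b\le c$, and by uniqueness of morphisms $g$ and $h$ are \emph{the} arrows $a\to b$ and $b\to c$. As $\W$ satisfies two-out-of-three (\Cref{def:WFSmodel}), it suffices to prove $a\to b\in\W$; then $b\to c\in\W$ follows. I will use freely that $\AC\subseteq\W$ and $\AF\subseteq\W$ (\Cref{rem:redundancy}), that $\AF$ is closed under pullbacks and $\AC$ under pushouts (\Cref{prop:TS-and-coTS-categorically}, since $\AF$ is a transfer system and $\AC$ a cotransfer system), and that $\W=\AF\circ\AC$.

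Second, the baby cases. If $x\to z\in\AF$ and $x\le y\le z$, then, since pullbacks in a lattice are meets and $x\wedge y=x$, the arrow $x\to y$ is a pullback of $x\to z$ along $y\to z$, hence $x\to y\in\AF\subseteq\W$; by two-out-of-three $y\to z\in\W$. Dually, if $x\to z\in\AC$ and $x\le y\le z$, then $y\to z$ is a pushout of $x\to z$ along $x\to y$, so $y\to z\in\AC\subseteq\W$ and $x\to y\in\W$. Thus whenever a morphism in $\AF$ or in $\AC$ is factored through an intermediate object, both factors lie in $\W$. For the general case, given $a\to c\in\W$ and $a\le b\le c$, factor $a\to c=\alpha\circ\beta$ with $\beta\colon a\to d\in\AC$ and $\alpha\colon d\to c\in\AF$, so $a\le d\le c$. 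Put $d':=b\wedge d$, so $a\le d'\le d$ and $a\le d'\le b$. Because $a\to d\in\AC$ and $d'$ lies between $a$ and $d$, the baby case yields $a\to d'\in\W$. Because $d'=b\wedge d$, the arrow $d'\to b$ is the pullback of $\alpha\colon d\to c$ along $b\to c$, so $d'\to b\in\AF\subseteq\W$. Composing, $a\to b=(d'\to b)\circ(a\to d')\in\W$, and then $b\to c\in\W$ by two-out-of-three; hence $\W$ is decomposable.

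I do not expect a serious obstacle here. The one conceptual point is that the classical route (weak equivalences are closed under retracts) is useless in a lattice, since a nontrivial retract of $a\to c$ onto $a\to b$ would force $b=c$; the real work is spotting that $b\wedge d$ is the correct intermediate object---chosen precisely so that restricting $\beta$ below $d$ and pulling $\alpha$ back to $b$ both produce morphisms already known to be weak equivalences. It is worth double-checking that the compatibility $\AC\subseteq\C$ of the two weak factorization systems is not needed---it is not; only $\W=\AF\circ\AC$, two-out-of-three, and the pullback/pushout stability of $\AF$ and $\AC$ enter.
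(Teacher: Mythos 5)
Your proof is correct, and it takes a genuinely different route from the paper's. The paper also begins by factoring $f$ as an acyclic cofibration followed by an acyclic fibration, but it then additionally factors $g$ as a cofibration followed by an acyclic fibration, solves the resulting lifting problem (cofibration against acyclic fibration), and observes that, because $P$ is a lattice, the lift is a pushout of the first factor of $f$ and hence an acyclic cofibration; two applications of two-out-of-three then finish the argument. You avoid both the auxiliary factorization of $g$ and the lifting axiom altogether: after writing $a\to c=\alpha\circ\beta$ with $\beta\colon a\to d\in\AC$ and $\alpha\colon d\to c\in\AF$, you compare with the intermediate object $b$ directly via the meet $d'=b\wedge d$, using pushout-stability of $\AC$ to handle $a\to d'$ and pullback-stability of $\AF$ to get $d'\to b$, and then two-out-of-three. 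Your version is more elementary in that it only invokes $\W=\AF\circ\AC$, two-out-of-three, and the (co)transfer-system stability of $\AF$ and $\AC$ (\Cref{rem:redundancy}, \Cref{prop:TS-and-coTS-categorically}), whereas the paper's argument stays closer to the general categorical proof of \cite[Proposition 1.8]{DrozZakharevichExtending} and shows explicitly how the lifting axiom enters; both exploit the lattice structure in an essential way (your meet $b\wedge d$ playing the role of the paper's identification of the lift as a pushout). Your closing remarks are also accurate: retract arguments are vacuous here, and the compatibility $\AC\subseteq\C$ is indeed never used.
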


\begin{proof}
Take $f= h \circ g \in \W$. Since $\W=\AF\circ \AC$, we can factor $f$ as an acyclic cofibration followed by an acyclic fibration, $f=f_{af}\circ f_{ac}$. Further, since $(\C,\AF)$ is a weak factorization system, we can factor $g$ as a cofibration followed by an acyclic fibration, $g=g_{af}\circ g_c$. These factorizations fit into the following commutative diagram where $f$ is the morphism from the top left to the bottom right.
\[
\xymatrix{ \bullet \ar[rr]^{f_{ac}} \ar[d]_{g_c} & & \bullet \ar[d]^{f_{af}} \\
\bullet \ar@{.>}[urr]^{k} \ar[r]_{g_{af}} & \bullet \ar[r]_h & \bullet  
}
\]
As we have a cofibration on the left and an acyclic fibration on the right, there is a lift $k$ in the diagram denoted by the dotted arrow. As $P$ is a lattice, $k$ is a pushout of $f_{ac}$ along $g_c$ and, consequently, an acyclic cofibration, making it a weak equivalence. By the two-out-of-three axiom again, $h \in \W$ and therefore also $g \in \W$ as claimed. 
\end{proof}

The following is another key property of model structures on a finite poset that provides information on the short arrows of the poset,  see \Cref{def:short-arrows}.

\begin{remark}\label{rem:shortarrowsareeither} If there is a model structure on a finite lattice $P$ with weak equivalences $\W$, then the factorization property of the weak factorization systems $(\C,\AF)$ and $(\AC,\F)$, and the fact that $\W=\AF\circ \AC$ imply the following about the short arrows of $P$.

\begin{itemize}
\item Every short arrow that is not a weak equivalence has to be both a fibration and a cofibration.
\item Every short arrow that is a weak equivalence has to be a fibration \emph{or} a cofibration, but can never be both.
\end{itemize}
\end{remark}

Thus, every model structure on a finite lattice is determined by a wide decomposable subcategory $\W$ that becomes the class of weak equivalences and a transfer system $T\subseteq\W$ that becomes the class of acyclic fibrations. We  learn more about the properties of such pairs $T \subseteq \W$ in the next two sections. 

\section{From Transfer Systems to Model Structures}\label{sec:FromTStoMS}

In this section, given a finite lattice $P$, we  characterize all model  structures with a given class of weak equivalences $\W$. As discussed in \Cref{rem:redundancy} and \cref{lem:weak=decomp}, any model structure is fully determined by its weak equivalences and acyclic fibrations, with the collection of weak equivalences forming a wide decomposable subcategory and the collection of acyclic fibrations forming a transfer system. In \cref{prop:cotransfer} we give necessary and sufficient conditions for when a wide decomposable subcategory $\W \subseteq P$ and a transfer system $T\subseteq \W$ form the weak equivalences and acyclic fibrations of a model structure. We then use this to describe all model  structures with a given class of weak equivalences  in terms of transfer systems in \Cref{thm:AFW-is-lattice}. Both of these theorems have dual versions where we use that the collection of (acyclic) cofibrations forms a cotransfer system.

As an example, consider the poset $[n]$ for some $n\geq0$. The wide decomposable subcategories of $[n]$ correspond to partitions, and as shown in \cite{ModelStructuresOnFiniteTotalOrders}, every pair of wide decomposable subcategory $\W$ along with a transfer system $T \subseteq \W$ defines a model structure on $[n]$. However, it is not always as simple, as the next two examples show.

\begin{example}\label{ex:square}
Consider the wide decomposable subcategory $\W$ in $[1]\times [1]$ shown below, and let $T$ be the transfer system consisting of only the horizontal arrow.
\[
\begin{tikzpicture}[>=stealth, bend angle=20, baseline=(current bounding box.center)]
    \fill (0,0) circle (2pt);
    \fill (1.5,0) circle (2pt);
    \fill (0,1.5) circle (2pt);
    \fill (1.5,1.5) circle (2pt);
    
    \node (00) at (0,0) {};
    \node (10) at (1.5,0) {};
    \node (01) at (0,1.5) {};
    \node (11) at (1.5,1.5) {};

    \draw[->] (00) -- (01);
    \draw[->] (00)--(10);
\end{tikzpicture}
\]
 We argue that there is no model structure with weak equivalences $\W$ and this particular choice of $\AF=T$. If there were, then the cofibrations would be given by $\C={}^{\boxslash}T=\mathcal{E}_d(T)^c$, which is the following.
\[
\begin{tikzpicture}[>=stealth, bend angle=20, baseline=(current bounding box.center)]
    \fill (0,0) circle (2pt);
    \fill (1.5,0) circle (2pt);
    \fill (0,1.5) circle (2pt);
    \fill (1.5,1.5) circle (2pt);
    
    \node (00) at (0,0) {};
    \node (10) at (1.5,0) {};
    \node (01) at (0,1.5) {};
    \node (11) at (1.5,1.5) {};

    \draw[->] (00) -- (01);
    \draw[->] (10)--(11);
    \draw[->] (01)--(11);
    \draw[->] (00)--(11);
\end{tikzpicture}
\]
Since $\AC=\C\cap\W$,  the acyclic cofibrations would consist only of the vertical arrow in $\W$, which is not a cotransfer system because it is not closed under pushouts. Hence, there is no model structure with the chosen classes.
\end{example}

It is possible to build a model structure whose weak equivalences are given by the $\W$ of \Cref{ex:square}, just not one with $\AF$ given by the $T$ given above. However, on larger lattices, it is possible to find a wide decomposable subcategory that never arises as the class of weak equivalences of a model structure.

\begin{example}\label{ex:middle-arrow}
    Consider the lattice $[2]\times[1]$ and the wide decomposable subcategory $Q$ shown below.
     \[
\begin{tikzpicture}[>=stealth, bend angle=20, baseline=(current bounding box.center)]
    \fill (0,0) circle (2pt);
    \fill (1.5,0) circle (2pt);
    \fill (3,0) circle (2pt);
    \fill (0,1.5) circle (2pt);
    \fill (1.5,1.5) circle (2pt);
    \fill (3,1.5) circle (2pt);
    
    \node (00) at (0,0) {};
    \node (10) at (1.5,0) {};
    \node (20) at (3,0) {};
    \node (01) at (0,1.5) {};
    \node (11) at (1.5,1.5) {};
    \node (21) at (3,1.5) {};

    \draw[->] (10)--(11);
\end{tikzpicture}
\]
By \Cref{rem:shortarrowsareeither}, if $Q$ were a class of weak equivalences, then the one nontrivial arrow in $Q$ must be either an acyclic fibration or an acyclic cofibration, so either $\AF=Q$ or $\AC=Q$. Neither is possible since $Q$ is neither closed under pushouts nor under pullbacks, meaning it is neither a cotransfer system nor a transfer system. Thus, there are no  model structures with this particular $Q$ as the class of weak equivalences. We revisit this case, along with other wide decomposable subcategories of $[2]\times [1]$, in \Cref{ex:burgerfailnocenter}.
\end{example}

For a general finite lattice $P$, given a wide decomposable subcategory $Q$ and a transfer system $T \subseteq Q$, we  need tools to determine if there is a model  structure with weak equivalences $\W=Q$ and acyclic fibrations $\AF = T$. \Cref{rem:redundancy} motivates the following notation.

\begin{notation}\label{notation}
    We will use the subscript ``temp" to indicate that the collection is a candidate for that particular class of morphisms in a model structure. Given a finite lattice $P$, a wide decomposable subcategory $Q$ and a transfer system $T \subseteq Q$, define
    \begin{itemize}
    \item $\Wtemp \coloneqq Q$,
    \item $\AFtemp \coloneqq T$,
    \item $\Ctemp \coloneqq {}^{\boxslash}\AFtemp$,
    \item $\ACtemp \coloneqq \Ctemp \cap \Wtemp$, and
    \item $\Ftemp \coloneqq \ACtemp{}^\boxslash$.
    \end{itemize}
\end{notation}

We  now show that the collection of classes of morphisms described in \Cref{notation} satisfies the factorization axiom for morphisms in a weak equivalence class. 

\begin{lemma}\label{lem:wtemp} Let $P$ be a finite lattice, let $Q\subseteq P$ be a wide decomposable subcategory, and let $T \subseteq Q$ be a transfer system. Let $\Wtemp$, $\AFtemp$ and $\ACtemp$ be as in \Cref{notation}. Then
\[
\Wtemp=\AFtemp \circ \ACtemp.
\]
\end{lemma}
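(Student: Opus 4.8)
The plan is to show the two inclusions $\AFtemp \circ \ACtemp \subseteq \Wtemp$ and $\Wtemp \subseteq \AFtemp \circ \ACtemp$ separately. For the first inclusion, note that $\ACtemp = \Ctemp \cap \Wtemp \subseteq \Wtemp$ by definition, and $\AFtemp = T \subseteq Q = \Wtemp$ by hypothesis; since $\Wtemp$ is a subcategory of $P$, it is closed under composition, so any composite of an element of $\ACtemp$ followed by an element of $\AFtemp$ lies in $\Wtemp$. This direction is immediate and requires no factorization.

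The substance is the reverse inclusion: every morphism $f \in \Wtemp = Q$ admits a factorization $f = p \circ i$ with $i \in \ACtemp$ and $p \in \AFtemp$. The natural starting point is the weak factorization system $(\Ctemp, \AFtemp) = ({}^\boxslash T, T)$, which exists by \Cref{rem:halftransfer} (applied to the transfer system $T$): it gives a factorization $f = p \circ i$ with $i \in \Ctemp = {}^\boxslash T$ and $p \in \AFtemp = T$. Since $P$ is a finite lattice, this factorization is essentially forced — $i \colon \mathrm{src}(f) \to c$ and $p \colon c \to \mathrm{tgt}(f)$ for some object $c$ with $\mathrm{src}(f) \leq c \leq \mathrm{tgt}(f)$. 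It remains to check that $i \in \ACtemp = \Ctemp \cap \Wtemp$; we already have $i \in \Ctemp$, so the only thing to verify is $i \in \Wtemp = Q$. Here I would invoke the hypothesis that $Q$ is \emph{decomposable}: we have $p \circ i = f \in Q$, hence both $p$ and $i$ lie in $Q$. This closes the argument.

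The main obstacle — and the only place care is needed — is making sure the factorization coming from the weak factorization system $({}^\boxslash T, T)$ really does produce an $i$ whose composite with $p$ equals the original $f$ on the nose (so that decomposability of $Q$ applies), and that $p$ itself lands in $Q$. Both are handled by the same observation: in a poset category every diagram commutes and there is at most one morphism between any two objects, so the factorization axiom literally hands us $\mathrm{src}(f) \xrightarrow{i} c \xrightarrow{p} \mathrm{tgt}(f)$ with $p \circ i = f$, and decomposability of $Q$ then forces $i, p \in Q$ since $f \in Q$. One should also record explicitly that $p \in \AFtemp$ is automatic from the choice of weak factorization system, and that $i \in \Ctemp$ is likewise automatic, so that $i \in \Ctemp \cap \Wtemp = \ACtemp$. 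No genuinely hard computation is involved; the lemma is really a bookkeeping consequence of \Cref{rem:halftransfer} together with decomposability of $Q$.
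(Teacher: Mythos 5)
Your proposal is correct and follows essentially the same route as the paper's proof: the easy inclusion from closure of $\Wtemp$ under composition, and the reverse inclusion by factoring $f$ via the weak factorization system $({}^\boxslash T, T)$ from \Cref{rem:halftransfer} and then invoking decomposability of $Q$ to place the left factor in $\ACtemp$. The extra remarks about uniqueness of morphisms in a poset are harmless but not needed beyond what the paper records.
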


\begin{proof} First, $\AFtemp =T\subseteq Q=\Wtemp$ and $\ACtemp = \Ctemp \cap \Wtemp$. Hence,  both $\AFtemp$ and $\ACtemp$ are contained in $\Wtemp$. Since $\Wtemp$ is closed under composition, it follows that $\AFtemp \circ \ACtemp \subseteq \Wtemp$.

For the converse, since $\AFtemp$ is a transfer system, by \Cref{rem:halftransfer} $(\Ctemp,\AFtemp)$ is a weak factorization system. Thus, if $f \in \Wtemp$, then we can factor  $f$ as $f = g \circ h$ for $g \in \AFtemp$ and $h \in \Ctemp$. As $\Wtemp=Q$ is decomposable by assumption, we have that $h$ and $g$ must also be in $\Wtemp$, so $h \in \ACtemp$.
\end{proof}

With this result in hand, we   give a necessary and sufficient condition for when a wide decomposable subcategory and a transfer system contained in it gives the weak equivalences and acyclic fibrations for a model structure.

\begin{theorem}\label{prop:cotransfer}
    Let $P$ be a finite lattice and let  $\W \subseteq P$ be a wide decomposable subcategory. If $T \subseteq \W$ is a transfer system, then there is a model structure in which $\W$ is the set of weak equivalences and $T$ the acyclic fibrations if and only if ${}^\boxslash T \cap \W$ is a cotransfer system on $P$. 
    
    Dually, if $K \subseteq \W$ is a cotransfer system, then there is a model structure on $P$ with weak equivalences $\W$ and acyclic cofibrations $K$ if and only if $K^\boxslash \cap \W$ is a transfer system.
\end{theorem}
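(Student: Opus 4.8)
The plan is to reduce the existence of a model structure with prescribed $\W$ and $\AF$ to the single condition that ${}^\boxslash T \cap \W$ is a cotransfer system, using \Cref{notation}, \Cref{lem:wtemp}, and \Cref{rem:halftransfer}. First I would prove the forward direction: suppose a model structure exists with weak equivalences $\W$ and acyclic fibrations $T = \AF$. By \Cref{rem:redundancy} we have $\C = {}^\boxslash \AF = {}^\boxslash T$ and $\AC = \C \cap \W = {}^\boxslash T \cap \W$. Since $(\AC, \F)$ is a weak factorization system, $\AC$ is closed under pushouts and contains all isomorphisms, so by \Cref{prop:TS-and-coTS-categorically} it is a cotransfer system on $P$. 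Hence ${}^\boxslash T \cap \W$ is a cotransfer system, as required.

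For the converse, assume $K' := {}^\boxslash T \cap \W = \ACtemp$ is a cotransfer system on $P$. By \Cref{rem:halftransfer} the pair $(K', (K')^\boxslash) = (\ACtemp, \ACtemp{}^\boxslash) = (\ACtemp, \Ftemp)$ is a weak factorization system, and likewise $(\Ctemp, \AFtemp) = ({}^\boxslash T, T)$ is a weak factorization system. To get a premodel category in the sense of \Cref{def:WFSmodel}, I must check $\ACtemp \subseteq \Ctemp$: this is immediate since $\ACtemp = \Ctemp \cap \Wtemp \subseteq \Ctemp$. Now set $\W' := \AFtemp \circ \ACtemp$; by \Cref{lem:wtemp} we have $\W' = \Wtemp = \W$. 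Since $\W$ is (by hypothesis) a wide decomposable subcategory of $P$, it is in particular closed under composition and satisfies two-out-of-three — indeed, if $f = h\circ g$ with $f \in \W$, decomposability gives $g, h \in \W$; if $g, h \in \W$ then $h \circ g \in \W$ since $\W$ is a subcategory; and if $f = h \circ g$ with $f, g \in \W$ (resp. $f, h \in \W$), then applying decomposability to $f$ shows $h \in \W$ (resp. $g \in \W$). Actually only the full two-out-of-three is needed, and decomposability plus being a subcategory yields it. Thus the data $(\ACtemp, \Ftemp)$ and $(\Ctemp, \AFtemp)$ with $\W$ satisfying two-out-of-three is a model structure on $P$, in which $\W$ is the class of weak equivalences and $T = \AFtemp$ the acyclic fibrations. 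This proves the non-dual statement.

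The dual statement follows by the dual argument, swapping the roles of transfer and cotransfer systems, pushouts and pullbacks, left and right sets: given a cotransfer system $K \subseteq \W$, one considers $\ACtemp' := K$, $\Ftemp' := K^\boxslash$, $\Ctemp' := {}^\boxslash(K^\boxslash)$ — wait, more precisely one sets $\AFtemp' := K^\boxslash \cap \W$ and runs the symmetric version of \Cref{notation} and \Cref{lem:wtemp}. The model structure exists iff $K^\boxslash \cap \W$ is closed under pullbacks and contains all isomorphisms, i.e. iff it is a transfer system, invoking \Cref{prop:TS-and-coTS-categorically} and \Cref{rem:halftransfer} exactly as before.

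I expect the main (and only genuine) obstacle to be the converse direction's verification that the two weak factorization systems assemble into a \emph{model structure} rather than merely a premodel structure — specifically, confirming that $\W = \AFtemp \circ \ACtemp$ equals the decomposable subcategory $\W$ (this is precisely the content of \Cref{lem:wtemp}) and hence inherits two-out-of-three. Everything else is bookkeeping with \Cref{rem:redundancy} and \Cref{rem:halftransfer}; the subtlety is making sure that the candidate classes of \Cref{notation} are forced to be the actual model-structure classes, which is guaranteed by the redundancy relations since a model structure is determined by $\W$ and $\AF$.
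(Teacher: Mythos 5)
Your proof is correct and follows essentially the same route as the paper: the forward direction via the redundancy relations, and the converse by assembling the two weak factorization systems $(\Ctemp,\AFtemp)$ and $(\ACtemp,\Ftemp)$ into a premodel structure and invoking \Cref{lem:wtemp} plus the fact that a wide decomposable subcategory automatically satisfies two-out-of-three (a point the paper leaves implicit and you usefully spell out). Your momentary hesitation in setting up the dual case resolves to the correct dual of \Cref{notation}, matching the paper's ``dual proof'' remark, so no changes are needed.
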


\begin{proof}
First, if there is a model structure with weak equivalences $\W$ and $\AF = T$, then by \Cref{def:WFSmodel}, $\AC = {}^\boxslash T \cap \W$ is the left set of a weak factorization system. Hence, ${}^\boxslash T \cap \W$ is a cotransfer system.

Conversely, assume ${}^\boxslash T \cap \W$ is a cotransfer system.  Let $\AFtemp = T$, and $\Ctemp$, $\ACtemp$, and $\Ftemp$ be as defined in \Cref{notation}. In particular, $\Ctemp ={}^{\boxslash}T$ and $\ACtemp = {}^{\boxslash}T\cap \W$. We will show that this collection forms a model structure. Since by assumption $\AFtemp$ is a transfer system and $\ACtemp$ is a cotransfer system, we have two weak factorization systems
\[
(\Ctemp,\AFtemp) \text{ and } (\ACtemp,\Ftemp) \text{ with } \ACtemp \subseteq \Ctemp, \text{ or equivalently } \AFtemp \subseteq \Ftemp.\]
Thus, this pair is a premodel structure. By  \Cref{lem:wtemp}, $\AFtemp \circ \ACtemp=
\W$, with $\W$ satisfying the 2-out-of-3 property. Therefore, this collection gives a model structure and not just a premodel structure. 

The second statement has a dual proof.
\end{proof}

 \Cref{prop:cotransfer} allows us to determine with relative ease whether a given  wide decomposable subcategory paired with a (co)transfer system  leads to a model structure.

\begin{example}\label{ex:nottriv}
Consider the lattice $P = [2]\times[1]$. Let $Q$ be the wide decomposable subcategory of $P$ shown below.
\[
\begin{tikzpicture}[>=stealth, bend angle=20, baseline=(current bounding box.center)]
    \fill (0,0) circle (2pt);
    \fill (1.5,0) circle (2pt);
    \fill (3,0) circle (2pt);
    \fill (0,1.5) circle (2pt);
    \fill (1.5,1.5) circle (2pt);
    \fill (3,1.5) circle (2pt);
    
    \node (00) at (0,0) {};
    \node (10) at (1.5,0) {};
    \node (20) at (3,0) {};
    \node (01) at (0,1.5) {};
    \node (11) at (1.5,1.5) {};
    \node (21) at (3,1.5) {};

    \draw[->] (00) -- (01);
    \draw[->] (00)--(10);
    \draw[->] (00)--(11);
    \draw[->] (10)--(11);
    \draw[->] (01)--(11);
    \draw[->, white, bend left] (01) to (21);
    \draw[->, white, bend right] (00) to (20);
\end{tikzpicture}
\]
We use \Cref{prop:cotransfer} to show that there is no model structure with $\W=Q$ and $\AF=T_{triv}$, the trivial transfer system consisting only of identity morphisms. Since ${}^\boxslash T_{triv}=P$ it follows that ${}^\boxslash T_{triv}\cap Q = Q$. However, $Q$ is not closed under pushouts and therefore is not a cotransfer system. By \Cref{prop:cotransfer}, there is no model structure with $\W=Q$ and $\AF=T_{triv}$.
\end{example}

\begin{example}\label{ex:ktriv}
Let $P=[2]\times[1]$, $Q$ be the decomposable subcategory depicted in \Cref{ex:nottriv}.  \Cref{prop:cotransfer} implies that there is a model structure with $\W=Q$ and $\AC=K_{triv}$ since $K_{triv}^\boxslash \cap Q = P \cap Q = Q$ is a transfer system.
\end{example}

These examples generalize to the following result. 

\begin{proposition}\label{prop:cotransferW}
    Let $\W$ be a wide decomposable subcategory of a finite lattice $P$. Then there exists a model structure with weak equivalences given by $\W$ and $\AF=T_{triv}$   if and only if $\W$ is a cotransfer system on $P$.
    
    Dually, there exists a model structure with $\AC=K_{triv}$ and weak equivalences given by $\W$ if and only if $\W$ is a transfer system on $P$.
\end{proposition}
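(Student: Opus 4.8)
The plan is to obtain both statements as immediate consequences of \Cref{prop:cotransfer}, specialized to the extreme case $T = T_{triv}$ (respectively $K = K_{triv}$). The one preliminary observation needed is that ${}^\boxslash T_{triv} = P$, exactly as already noted in \Cref{ex:nottriv}: in a poset category every morphism $i\colon a \to b$ has the left lifting property with respect to every identity $\id_x$, because in any commutative square with right-hand edge $\id_x$ the bottom edge $b \to x$ is itself the required lift. Dually, $K_{triv}^\boxslash = P$.

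For the first statement, I would note that since $\W$ is wide it contains all identities, so $T_{triv} \subseteq \W$ and \Cref{prop:cotransfer} applies with $T = T_{triv}$: there is a model structure with weak equivalences $\W$ and $\AF = T_{triv}$ if and only if ${}^\boxslash T_{triv} \cap \W$ is a cotransfer system on $P$. By the preliminary observation ${}^\boxslash T_{triv} \cap \W = P \cap \W = \W$, so this condition is precisely that $\W$ is a cotransfer system.

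For the dual statement I would apply the dual half of \Cref{prop:cotransfer} with $K = K_{triv}$ (again $K_{triv} \subseteq \W$ since $\W$ is wide): there is a model structure with weak equivalences $\W$ and $\AC = K_{triv}$ if and only if $K_{triv}^\boxslash \cap \W$ is a transfer system, and $K_{triv}^\boxslash \cap \W = P \cap \W = \W$, so the condition is that $\W$ is a transfer system.

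I do not anticipate any genuine obstacle: all the substantive work is already packaged in \Cref{prop:cotransfer}, and the remaining content is the elementary identification of ${}^\boxslash T_{triv}$ and $K_{triv}^\boxslash$ with the whole lattice $P$. The only point to keep straight is the bookkeeping, namely confirming that the hypotheses of \Cref{prop:cotransfer} (wide decomposable $\W$, and $T_{triv}$ or $K_{triv}$ contained in $\W$) are satisfied, which is automatic because $\W$ contains all objects and hence all identity morphisms.
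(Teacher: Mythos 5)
Your proof is correct and follows essentially the same route as the paper: both hinge on \Cref{prop:cotransfer} together with the observation that ${}^\boxslash T_{triv}=P$ (dually $K_{triv}^\boxslash=P$), so that ${}^\boxslash T_{triv}\cap\W=\W$. The only cosmetic difference is that the paper argues the forward implication directly from $\W=\AF\circ\AC=\AC$, whereas you invoke the if-and-only-if form of \Cref{prop:cotransfer} for both directions; this changes nothing of substance.
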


\begin{proof}
    Assume there exists a model structure with weak equivalences given by $\W$ and acyclic fibrations given by $T_{triv}$. As $\W=\AF \circ \AC$, this means that $\W=\AC$, which in turn implies that $\W$ is a cotransfer system. 
    
    Conversely, suppose $\W$ is a cotransfer system, and consider the trivial transfer system $T_{triv} \subseteq \W$. Then $^\boxslash T_{triv}=P$, and $^\boxslash T_{triv}\cap \W = \W$. \Cref{prop:cotransfer} implies that there is a model structure with weak equivalences given by $\W$ and acyclic fibrations given by $T_{triv}$, as wanted.

    The second statement follows by a dual argument.
\end{proof}

\begin{remark}
    Recall from \cref{defn:saturated} that a saturated transfer system on a finite lattice is both a wide decomposable subcategory and a transfer system. Thus, \Cref{prop:cotransferW} implies that if $T$ is a saturated transfer system then there exists a model structure with $\W=\AF=T$ and $\AC=K_{triv}$.
\end{remark}

The following definition allows us to speak more concisely about wide decomposable subcategories that give rise to a model  structure.

\begin{definition}\label{def:weak-eq-set} Let $\W$ be a wide decomposable subcategory of a finite lattice $P$. We say that $\W$ is a \emph{weak equivalence set} if there exists a model category structure on $P$ with weak equivalences $\W$.
\end{definition}

\begin{example}\label{ex:cotransferweakequivalence}
By \Cref{prop:cotransferW}, if a wide decomposable subcategory is a transfer system or a cotransfer system, then it is  a weak equivalence set.
\end{example}

For the rest of this section we  study all model structures on a finite lattice $P$ with a given weak equivalence set $\W$. We  prove that this collection is a sublattice of the lattice of transfer systems on $P$. In particular, this shows that there exists a minimal and a maximal model structure with weak equivalences $\W$ when ordering by containment of the acyclic fibrations.

\begin{definition}[$\AFW$ and $\ACW$]
    Given a finite lattice $P$ and weak equivalence set $\W$, we let $\AFW$ be the set consisting of those transfer systems contained in $\W$ that form the acyclic fibrations of a model structure on $P$ with weak equivalences  $\W$. We similarly let $\ACW$ be the set consisting of those cotransfer systems contained in $\W$ that form the acyclic cofibrations of a model structure on $P$ with weak equivalences $\W$.
\end{definition}

Recall that in the lattice of transfer systems on $P$ the meet $T_1 \wedge T_2$ is given by the intersection $T_1 \cap T_2$, and the join $T_1 \vee T_2$ is given by the transfer system generated by the union $T_1 \cup T_2$, which can be obtained by closing $T_1\cup T_2$ under compositions as seen in \Cref{lem: joins}. There are similar descriptions for the meet and the join of cotransfer systems.

\begin{proposition}\label{prop:intersection_of_AF}
  Let $\W$ be a weak equivalence set on a finite lattice $P$. If $T_1,T_2\in \AFW$, then $T_1\wedge T_2 \in \AFW$. Dually, if $K_1,K_2\in \ACW$, then $K_1 \wedge K_2 \in \ACW$.
\end{proposition}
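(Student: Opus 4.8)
The plan is to transport the problem to the acyclic cofibration side, where the relevant operation becomes a join of cotransfer systems, and then return via the duality of \Cref{rem:halftransfer}. Since $T_1,T_2\in\AFW$, \Cref{prop:cotransfer} tells us that each $K_i:={}^\boxslash T_i\cap\W$ is a cotransfer system, and in fact $K_i$ is the class of acyclic cofibrations of a model structure with weak equivalences $\W$ and acyclic fibrations $T_i$. In that model structure the fibrations are $K_i^\boxslash$, and since $\AF=\F\cap\W$ in any model structure (\Cref{rem:redundancy}), I would first record the identity $K_i^\boxslash\cap\W=T_i$. By \Cref{lem:vee-in-W}, the join $K:=K_1\vee K_2$ is again a cotransfer system contained in $\W$.

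Next I would compute $K^\boxslash\cap\W$. By \Cref{rem:halftransfer} the assignment $L\mapsto L^\boxslash$ is an order-reversing bijection $\coTr(P)\to\Tr(P)$, so it carries joins to meets, giving $K^\boxslash=(K_1\vee K_2)^\boxslash=K_1^\boxslash\wedge K_2^\boxslash=K_1^\boxslash\cap K_2^\boxslash$. Intersecting with $\W$ and using the identity above yields
\[
K^\boxslash\cap\W=(K_1^\boxslash\cap\W)\cap(K_2^\boxslash\cap\W)=T_1\cap T_2,
\]
which is a transfer system because $T_1$ and $T_2$ are. Now $K\subseteq\W$ is a cotransfer system with $K^\boxslash\cap\W$ a transfer system, so the dual half of \Cref{prop:cotransfer} produces a model structure with weak equivalences $\W$ and acyclic cofibrations $K$; in it the acyclic fibrations are $K^\boxslash\cap\W=T_1\cap T_2=T_1\wedge T_2$. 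Therefore $T_1\wedge T_2\in\AFW$. The statement about $\ACW$ I would prove by the evident dual argument, or equivalently by applying the above to the opposite lattice $P^{\mathrm{op}}$.

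The main thing to get right is the bookkeeping among the four classes of a model structure: that the acyclic cofibrations of the model structure realizing $T_i$ are $K_i={}^\boxslash T_i\cap\W$ and \emph{not} ${}^\boxslash T_i$, and that intersecting the fibrations $K_i^\boxslash$ back with $\W$ recovers $T_i$. This matters because it is the $K_i$, not the ${}^\boxslash T_i$, whose join is guaranteed to stay inside $\W$ by \Cref{lem:vee-in-W}; once that is set up correctly, the order-reversing bijection of \Cref{rem:halftransfer} converts this join into the intersection $T_1\cap T_2$ and \Cref{prop:cotransfer} closes the loop. There is no genuinely hard computation here — the only real pitfall is conflating ${}^\boxslash T_i$ with $K_i$.
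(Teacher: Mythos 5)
Your proof is correct, but it takes a genuinely different route from the paper's. The paper stays on the acyclic-fibration side: it applies \Cref{prop:cotransfer} to the candidate class ${}^\boxslash(T_1\wedge T_2)\cap\W$ and verifies by hand that it is closed under pushouts, writing ${}^\boxslash(T_1\wedge T_2)$ as the composition-closure of $({}^\boxslash T_1)\cup({}^\boxslash T_2)$, decomposing a given morphism into such factors, and using decomposability of $\W$ plus the fact that each $({}^\boxslash T_i)\cap\W$ is a cotransfer system to push out factor by factor. You instead pass entirely to the acyclic-cofibration side: you record $K_i={}^\boxslash T_i\cap\W$ and the identity $K_i^\boxslash\cap\W=T_i$ from \Cref{rem:redundancy}, form $K_1\vee K_2\subseteq\W$ via \Cref{lem:vee-in-W}, use the anti-isomorphism of \Cref{rem:halftransfer} to compute $(K_1\vee K_2)^\boxslash\cap\W=T_1\cap T_2$, and close with the dual half of \Cref{prop:cotransfer} — no element-wise pushout check at all. (Your appeal to ``joins go to meets'' is legitimate because the two order-reversing maps in \Cref{rem:halftransfer} are mutually inverse, hence form a lattice anti-isomorphism; the paper uses the same fact in the opposite direction.) What your route buys is a more structural argument that also exhibits the realizing model structure explicitly, with acyclic cofibrations $K_1\vee K_2$, thereby anticipating the meet/join exchange formalized later in \Cref{thm:duality-AF-AC}; what the paper's route buys is a self-contained verification that identifies the acyclic cofibrations as ${}^\boxslash(T_1\wedge T_2)\cap\W$ directly and makes visible exactly where decomposability of $\W$ enters. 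You were also right to flag the one real pitfall: it is the classes $K_i$, not ${}^\boxslash T_i$, whose join \Cref{lem:vee-in-W} keeps inside $\W$, and your bookkeeping there is accurate.
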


\begin{proof}
    By \Cref{prop:cotransfer}, it suffices to show that $^\boxslash (T_1 \wedge T_2)\cap \W$ is a cotransfer system. Since both $^\boxslash (T_1 \wedge T_2)$ and $\W$ are closed under composition and contain all identities, their intersection likewise is closed under composition and contains all identities. Thus, we must show that $^\boxslash (T_1 \wedge T_2)\cap \W$ is closed under pushouts. 
    The order-reversing bijection in \Cref{rem:halftransfer} implies that $^\boxslash (T_1 \wedge T_2)$ is the cotransfer system given by $(^\boxslash T_1) \vee (^\boxslash T_2)$, which as noted above is  the closure under composition of $(^\boxslash T_1) \cup (^\boxslash T_2)$.

    Now take $f\in {}^\boxslash(T_1 \wedge T_2)\cap \W$. Then $f=g_k \circ \dots \circ g_1$, where each $g_i$ is in $^\boxslash T_1$ or $^\boxslash T_2$. Any pushout of $f$ is a composite of pushouts of the $g_i$, so it is enough to prove that the pushout of each $g_i$ is in $^\boxslash (T_1 \wedge T_2)\cap \W$. Fix $i$ and, without loss of generality, assume $g_i \in {}^\boxslash T_1$. Since $\W$ is decomposable, $g_i$ is in $\W$, and thus is in $(^\boxslash T_1) \cap \W$. Our assumption that $T_1\in \AFW$ together with \Cref{prop:cotransfer} implies that $(^\boxslash T_1) \cap \W$ is a  cotransfer system, and hence closed under pushouts. Thus, any pushout of $g_i$ is in $(^\boxslash T_1) \cap \W \subseteq {}^\boxslash (T_1 \wedge T_2)\cap \W$, finishing the proof.

    A dual proof gives the second statement.
\end{proof}

\begin{remark}
    \Cref{prop:intersection_of_AF} is a special case of a general result about model structures for arbitrary categories, found for example in \cite[Proposition 4.4.11]{BalchinBook}: Given a bicomplete category $\mathcal{C}$ with two model structures with the same collection of weak equivalences $\W$, and acyclic fibrations $\AF_1$ and $\AF_2$ respectively,  there exists a model structure on $\mathcal{C}$ with weak equivalences $\W$ and acyclic fibrations given by $\AF_1 \cap \AF_2$. 
\end{remark}

The fact that the meet of two transfer systems in $\AFW$ is again in $\AFW$ implies the existence of a minimal transfer system in $\AFW$. Similarly, there is a minimal cotransfer system in $\ACW$.

\begin{definition}[$\AF_{min}$ and $\AC_{min}$]
Let $\W$ be a weak equivalence set on a finite lattice $P$. Define $\AF_{min}$ in $\W$ to be the meet of all transfer systems in $\AFW$. Similarly, define $\AC_{min}$ to be the meet of all cotransfer systems in $\ACW$.
\end{definition}

\begin{example}
Consider the lattice $P = [1] \times [1]$. Since the following wide decomposable subcategory $\W$ is a transfer system, it is a weak equivalence set on $P$. 
\[
\begin{tikzpicture}[>=stealth, bend angle=20, baseline=(current bounding box.center)]
    \fill (0,0) circle (2pt);
    \fill (1.5,0) circle (2pt);
    \fill (0,1.5) circle (2pt);
    \fill (1.5,1.5) circle (2pt);
    
    \node (00) at (0,0) {};
    \node (10) at (1.5,0) {};
    \node (01) at (0,1.5) {};
    \node (11) at (1.5,1.5) {};

    \draw[->] (00) -- (01);
    \draw[->, white, bend right] (00) to (10);
    \draw[->, white, bend left] (01) to (11);
\end{tikzpicture}
\]
There are two transfer systems contained in $\W$, the trivial transfer system and $\W$. By \Cref{prop:cotransferW}, since $\W$ is not a cotransfer system, $T_{triv}$ is not an element of $\AFW$. Hence $\AF_{min}=\W$.
\end{example}

We now focus on showing that $\AFW$ and $\ACW$ are closed under joins, and thus each has a maximal element. We begin with a preliminary result.

\begin{proposition}\label{prop:range} Let $\W$ be a weak equivalence set on a finite lattice $P$. If $T \in \AFW$ and $T'$ is a transfer system such that $T \subseteq T' \subseteq \W$, then $T'\in \AFW$.

Dually,  if $K \in \ACW$ and $K'$ is a cotransfer system such that $K \subseteq K' \subseteq \W$, then $K'\in \ACW$.
\end{proposition}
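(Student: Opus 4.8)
The plan is to use the criterion from \Cref{prop:cotransfer}: given that $T \in \AFW$, we know ${}^\boxslash T \cap \W$ is a cotransfer system, and we must show that ${}^\boxslash T' \cap \W$ is also a cotransfer system, so that $T'$ lies in $\AFW$ as well. Since ${}^\boxslash T' \cap \W$ is visibly a wide subcategory closed under composition (being the intersection of two such), the only thing to verify is closure under pushouts. The strategy is to compare ${}^\boxslash T'$ with ${}^\boxslash T$: because $T \subseteq T'$, the order-reversing behavior of the lifting operator gives ${}^\boxslash T' \subseteq {}^\boxslash T$, so ${}^\boxslash T' \cap \W \subseteq {}^\boxslash T \cap \W$.

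First I would take a morphism $f \in {}^\boxslash T' \cap \W$ and an arbitrary pushout $f'$ of $f$ in $P$ (which, as $P$ is a lattice, is the unique morphism $b \to a \vee b$ obtained by joining along $f\colon a \to b'$). Since $\W$ is decomposable, hence closed under pushouts in the sense used throughout — actually here I should be careful: I want $f' \in \W$. The cleanest route is to observe that $f \in {}^\boxslash T \cap \W$ as well (by the containment above), and since $T \in \AFW$, \Cref{prop:cotransfer} tells us ${}^\boxslash T \cap \W$ is a cotransfer system, hence closed under pushouts, so $f' \in {}^\boxslash T \cap \W$; in particular $f' \in \W$ and $f' \in {}^\boxslash T$. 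It then remains to upgrade $f' \in {}^\boxslash T$ to $f' \in {}^\boxslash T'$. This is where the hypothesis $T' \subseteq \W$ must be used.

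The key step — and the one I expect to be the main obstacle — is precisely showing $f' \in {}^\boxslash T'$, i.e. that $f'$ has the left lifting property against every arrow of $T'$, not just against every arrow of $T$. Here is how I would attack it. Suppose $f'\colon b \to c$ fails to lift against some $p \in T'$; using \Cref{prop:liftformula}, failure of lifting means $f' \in \mathcal{E}_d(T')$, i.e. there is an arrow $x \to y$ in $T'$ with the source of $f'$ lying below $x$ and the target of $f'$ equal to $y$ (or some such configuration from the downward extension). Since $f' \in \W$ and $p \in T' \subseteq \W$, the relevant arrows in the obstruction all lie in $\W$; because $\W$ is decomposable and $f'$ is a "piece" sitting below the $\W$-arrow $x \to y$, the composite or factor forces $f'$ itself into a configuration incompatible with $f' \in {}^\boxslash T \cap \W$ being closed under pushouts. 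More concretely, I would try to show directly that $\mathcal{E}_d(T') \cap \W = \mathcal{E}_d(T) \cap \W$: any downward extension using an arrow of $T' \setminus T$ already sits inside $\W$ and, by saturation/decomposability arguments in the spirit of \Cref{rem:maximality} and \Cref{prop:Tmax-saturated}, cannot meet ${}^\boxslash T \cap \W$. Then ${}^\boxslash T' \cap \W = \mathcal{E}_d(T')^c \cap \W = \mathcal{E}_d(T)^c \cap \W = {}^\boxslash T \cap \W$, which we already know is a cotransfer system, and we are done. The dual statement for $K, K'$ follows by the evident dual argument, replacing transfer systems, pullbacks, and $\mathcal{E}_d$ by cotransfer systems, pushouts, and $\mathcal{E}_u$.

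The one subtlety I want to flag for the write-up is making the equality ${}^\boxslash T' \cap \W = {}^\boxslash T \cap \W$ rigorous: the inclusion $\subseteq$ is formal, and for $\supseteq$ one needs that an $f \in {}^\boxslash T \cap \W$ which fails to lift against some $p \in T'$ would have to fail against an arrow "visible from $T$" — this is exactly where $T' \subseteq \W$ combined with decomposability of $\W$ does the work, since it prevents $T'$ from containing genuinely new obstructions that are not already forced by $\W$ itself. Once that equality is in hand, the proof is immediate.
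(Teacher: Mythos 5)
Your overall skeleton (reduce via \Cref{prop:cotransfer} to showing ${}^\boxslash T' \cap \W$ is closed under pushouts, and get $f' \in {}^\boxslash T \cap \W$ from the cotransfer system ${}^\boxslash T \cap \W$) is fine, but the step you yourself flag as the crux is attempted through a claim that is false. The proposed equality ${}^\boxslash T' \cap \W = {}^\boxslash T \cap \W$ (equivalently $\mathcal{E}_d(T')\cap\W=\mathcal{E}_d(T)\cap\W$) does not hold in general: take $P=[1]$, $\W=P$, $T=T_{triv}$ and $T'$ the complete transfer system containing $0\to 1$. Then $T\in\AFW$ and $T\subseteq T'\subseteq\W$, but ${}^\boxslash T\cap\W=P$ while ${}^\boxslash T'\cap\W$ consists only of identities, and $\mathcal{E}_d(T')\cap\W=\{0\to 1\}\neq\emptyset=\mathcal{E}_d(T)\cap\W$. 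So the ``$\supseteq$'' inclusion you hoped to extract from decomposability/saturation is simply not true (the proposition survives in this example only because the trivial subcategory happens to be a cotransfer system), and the argument as proposed does not close.

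The repair is much simpler, and it is essentially the paper's proof. The missing step, $f'\in{}^\boxslash T'$, needs no appeal to \Cref{prop:liftformula} at all: since $T'$ is a transfer system, $({}^\boxslash T',T')$ is a weak factorization system by \Cref{rem:halftransfer}, so ${}^\boxslash T'$ is itself a cotransfer system and in particular closed under pushouts; as $f\in{}^\boxslash T'$, its pushout $f'$ is too, and combined with $f'\in\W$ (from your step using ${}^\boxslash T\cap\W$) you get $f'\in{}^\boxslash T'\cap\W$. Packaged globally, ${}^\boxslash T'\subseteq{}^\boxslash T$ gives
\[
{}^\boxslash T'\cap\W \;=\; {}^\boxslash T'\cap\bigl({}^\boxslash T\cap\W\bigr),
\]
an intersection of two cotransfer systems, hence a cotransfer system; this is exactly the identity $\mathsf{AC'}=\mathsf{C'}\cap\mathsf{AC}$ proved in the paper. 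With that in place, \Cref{prop:cotransfer} (which is where the hypothesis $T'\subseteq\W$ enters) yields $T'\in\AFW$, and the dual half goes through verbatim.
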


\begin{proof} By assumption, there is a model category structure with weak equivalences $\W$, acyclic fibrations $\AF=T$, cofibrations $\C={}^\boxslash T$ and the acyclic cofibrations $\AC={}^\boxslash T \cap \W$. 

For a transfer system $T'$ such that $T \subseteq T' \subseteq \W$, we define $\mathsf{AF'}=T'$, $\mathsf{C'}={}^\boxslash T'$, and $\mathsf{AC'} = \mathsf{C'} \cap \W$. By \Cref{prop:cotransfer}, it suffices to prove that $\mathsf{AC'}$ is a cotransfer system. We know that $\C$, $\mathsf{C'}$, and $\AC,$ are cotransfer systems, and since $T \subseteq T'$, it follows that $\C \supseteq \mathsf{C'}.$
    We will show that $\mathsf{AC'} = \mathsf{C'} \cap \AC$. Then since both $\mathsf{C'}$ and $\AC$ are cotransfer systems, so is $\mathsf{AC'}$. 
    
    In one direction, since $\mathsf{C'} \subseteq \C$, we have that $\mathsf{AC'} = \mathsf{C'} \cap \W \subseteq \C \cap \W = \AC$. Therefore $\mathsf{AC'} \subseteq \mathsf{C'} \cap \AC$. In the other direction, by definition $\AC \subseteq \W$ so $\mathsf{C'} \cap \AC \subseteq \mathsf{C'} \cap \W = \mathsf{AC'}$. We conclude that $\mathsf{AC'} = \mathsf{C'} \cap \AC$. 
\end{proof}

We can now prove that $\AFW$ and $\ACW$ are closed under joins and thus contain maximal elements.

\begin{proposition}\label{prop:union_of_AF}
  Let $\W$ be a weak equivalence set on a finite lattice $P$. If $T_1$ and $T_2$ are transfer systems in $\AFW$, then $T_1 \vee T_2 \in \AFW$. 
  
  Dually, if $K_1$ and $K_2$ and cotransfer systems in $\ACW$, then $K_1 \vee K_2 \in \ACW$.
\end{proposition}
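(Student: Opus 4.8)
The plan is to deduce this immediately from the two preparatory results already established: \Cref{lem:vee-in-W}, which says that the join of two transfer systems contained in a wide subcategory stays inside that subcategory, and \Cref{prop:range}, which says that $\AFW$ is ``upward closed'' among transfer systems sitting between an element of $\AFW$ and $\W$. No new lifting-property computations should be needed.

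First I would observe that $T_1 \vee T_2$, the join taken in the lattice $\Tr(P)$, is again a transfer system on $P$. Next, since $\W$ is in particular a wide subcategory of $P$ and $T_1, T_2 \subseteq \W$ are transfer systems, \Cref{lem:vee-in-W} gives $T_1 \vee T_2 \subseteq \W$. Now the chain of inclusions
\[
T_1 \;\subseteq\; T_1 \vee T_2 \;\subseteq\; \W
\]
holds, with $T_1 \vee T_2$ a transfer system and $T_1 \in \AFW$ by hypothesis. Applying \Cref{prop:range} with $T = T_1$ and $T' = T_1 \vee T_2$ yields $T_1 \vee T_2 \in \AFW$, which is the claim.

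For the dual statement, the same argument runs with the dual halves of \Cref{lem:vee-in-W} and \Cref{prop:range}: if $K_1, K_2 \in \ACW$ then $K_1 \vee K_2$ is a cotransfer system, it is contained in $\W$ by (the dual part of) \Cref{lem:vee-in-W}, and since $K_1 \subseteq K_1 \vee K_2 \subseteq \W$ with $K_1 \in \ACW$, (the dual part of) \Cref{prop:range} gives $K_1 \vee K_2 \in \ACW$.

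I do not expect a genuine obstacle here; the substance of the argument is carried entirely by \Cref{prop:cotransfer} and \Cref{prop:range}. The only point worth a moment's care is confirming that the hypotheses of \Cref{lem:vee-in-W} and \Cref{prop:range} are met, namely that $\W$ is a wide subcategory (immediate, since a wide decomposable subcategory is wide) and that $T_1 \vee T_2$ is still a transfer system contained in $\W$ (immediate from the previous paragraph). Combined with \Cref{prop:intersection_of_AF}, this proposition shows that $\AFW$ is a sublattice of $\Tr(P)$, and in particular possesses a maximum element, paralleling the minimum element $\AF_{min}$ already obtained; the analogous conclusion holds for $\ACW$ inside $\coTr(P)$.
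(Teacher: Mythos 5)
Your proposal is correct and is essentially the paper's own proof: both argue that $T_1 \vee T_2$ is a transfer system contained in $\W$ by \Cref{lem:vee-in-W} and then apply \Cref{prop:range} to the chain $T_1 \subseteq T_1 \vee T_2 \subseteq \W$, with the dual case handled identically. (One small slip in your closing remark: the substance is carried by \Cref{lem:vee-in-W} and \Cref{prop:range}, not \Cref{prop:cotransfer} directly, though the latter underlies \Cref{prop:range}.)
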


\begin{proof}
    By \Cref{lem:vee-in-W}, $T_1\vee T_2$ is a transfer system satisfying $T_1 \subseteq T_1 \vee T_2 \subseteq \W$, and thus, by \Cref{prop:range}, we have that $T_1 \vee T_2 \in \AFW$.
    The dual proof is similar. \end{proof}

\begin{definition}[$\AF_{max}$ and $\AC_{max}$]\label{def:AFmax and ACmax}
    Let $\W$ be a weak equivalence set on a finite lattice $P$. Define $\AF_{max}$ to be the join of all transfer systems in $\AFW$ and $\AC_{max}$ to be the join of all cotransfer systems in $\ACW$.
\end{definition}

The above results combine to give the following theorem and its dual.

\begin{theorem}\label{thm:AFW-is-lattice} Let $\W$ be a weak equivalence set on a finite lattice $P$. Then 
\begin{enumerate}
    \item $\AFW$ is a lattice with meet and join inherited from $\Tr(P)$, and
    \item in particular, $\AFW$ is the interval sublattice $$[\AF_{min},\AF_{max}] =\{ T \in \Tr(P) \mid \AF_{min} \subseteq T \subseteq \AF_{max} \}.$$
\end{enumerate}

\end{theorem}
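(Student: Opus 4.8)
The plan is to assemble the preparatory results; nothing substantially new is needed, since \Cref{prop:intersection_of_AF}, \Cref{prop:union_of_AF}, and \Cref{prop:range} already carry the technical weight, and $\Tr(P)$ is known to be a lattice.

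First I would record that $\AFW$ is nonempty and finite: it is nonempty because $\W$ being a weak equivalence set (\Cref{def:weak-eq-set}) means some model structure on $P$ has $\W$ as its weak equivalences, and the acyclic fibrations of that structure are an element of $\AFW$; it is finite because $P$ is finite. By \Cref{prop:intersection_of_AF}, $\AFW$ is closed under the binary meet of $\Tr(P)$, and by \Cref{prop:union_of_AF} it is closed under the binary join. Iterating these over the finite set $\AFW$ shows that $\AF_{min}$ (the meet of all its elements) and $\AF_{max}$ (the join of all its elements) both lie in $\AFW$, and by construction they are respectively the minimum and the maximum of $\AFW$. In particular $\AF_{max} \subseteq \W$, since by definition every member of $\AFW$ is a transfer system contained in $\W$.

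Next I would establish part (2), namely $\AFW = \{T \in \Tr(P) \mid \AF_{min} \subseteq T \subseteq \AF_{max}\}$. The inclusion ``$\subseteq$'' is immediate: any $T \in \AFW$ satisfies $\AF_{min} \subseteq T \subseteq \AF_{max}$ by the minimality and maximality just noted. For ``$\supseteq$'', let $T$ be a transfer system with $\AF_{min} \subseteq T \subseteq \AF_{max}$. Then $T \subseteq \AF_{max} \subseteq \W$, so $T$ is a transfer system sandwiched as $\AF_{min} \subseteq T \subseteq \W$ with $\AF_{min} \in \AFW$; \Cref{prop:range} then yields $T \in \AFW$.

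Finally, part (1) follows formally: an interval $[a,b]$ in any lattice is a sublattice whose meets and joins agree with those of the ambient lattice, so by (2) the set $\AFW$ is a sublattice of $\Tr(P)$ with inherited operations. The dual statements about $\ACW$, $\AC_{min}$, and $\AC_{max}$ are proved by the identical argument, invoking the dual halves of \Cref{prop:intersection_of_AF}, \Cref{prop:union_of_AF}, and \Cref{prop:range} and the fact that $\coTr(P)$ is a lattice. The only step requiring a moment's care is observing that $\AF_{max}$ genuinely belongs to $\AFW$, hence lies inside $\W$, so that \Cref{prop:range} is applicable to the intermediate transfer systems; the real difficulty was already absorbed into the cited propositions, so I do not expect a serious obstacle here.
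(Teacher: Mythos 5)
Your proposal is correct and follows essentially the same route as the paper: part (1) from \Cref{prop:intersection_of_AF} and \Cref{prop:union_of_AF}, and part (2) from the definitions of $\AF_{min}$ and $\AF_{max}$ together with \Cref{prop:range}. The extra details you supply (nonemptiness, finiteness, and $\AF_{max} \subseteq \W$ so that \Cref{prop:range} applies) are correct elaborations of the same argument.
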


\begin{proof}
    Part (1) follows directly from \Cref{prop:intersection_of_AF} and \Cref{prop:union_of_AF}. Part (2) follows from the definitions of $\AF_{min}$ and $\AF_{max}$ and \Cref{prop:range}.
\end{proof}

\begin{theorem}\label{thm:ACW-is-lattice}
    Let $\W$ be a weak equivalence set on a finite lattice $P$. Then 
    \begin{enumerate}
    \item $\ACW$ is a lattice with meet and join inherited from $\coTr(P)$, and
    \item in particular, $\ACW$ is the interval sublattice $$[\AC_{min},\AC_{max}] =\{ K \in \coTr(P) \mid \AC_{min} \subseteq K \subseteq \AC_{max} \}.$$
    \end{enumerate}
\end{theorem}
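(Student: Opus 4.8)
The plan is to mirror the proof of \Cref{thm:AFW-is-lattice} verbatim, simply invoking the cotransfer-system halves of the propositions that were already proved for this purpose. No new argument is needed; the strategy is pure dualization.

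For part (1): since $P$ is finite, $\coTr(P)$ is a finite lattice, so it suffices to show that $\ACW$ is closed under the binary meet and join of $\coTr(P)$ — finiteness then guarantees that the meet and the join of the whole set $\ACW$ exist and lie in $\ACW$, and these are by definition $\AC_{min}$ and $\AC_{max}$. Closure under meets is exactly the dual statement in \Cref{prop:intersection_of_AF}: if $K_1,K_2 \in \ACW$ then $K_1 \wedge K_2 \in \ACW$. Closure under joins is the dual statement in \Cref{prop:union_of_AF}: if $K_1,K_2 \in \ACW$ then $K_1 \vee K_2 \in \ACW$. Both are recorded as the final sentence of those propositions, so part (1) is immediate, with meets and joins inherited from $\coTr(P)$.

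For part (2): the containment $\ACW \subseteq [\AC_{min},\AC_{max}]$ holds by the very definition of $\AC_{min}$ and $\AC_{max}$ as the meet and join of $\ACW$. For the reverse containment, suppose $K$ is a cotransfer system with $\AC_{min} \subseteq K \subseteq \AC_{max}$. Since $\AC_{max} \in \ACW$, it is in particular a cotransfer system contained in $\W$, hence $K \subseteq \AC_{max} \subseteq \W$. Now apply the dual of \Cref{prop:range} with the known element $\AC_{min} \in \ACW$ in the role of $K$ there and with $K' = K$: from $\AC_{min} \subseteq K \subseteq \W$ we conclude $K \in \ACW$. This establishes $[\AC_{min},\AC_{max}] \subseteq \ACW$, and together with the previous containment proves part (2).

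The main obstacle is essentially nonexistent: all the genuine content sits in \Cref{prop:intersection_of_AF}, \Cref{prop:union_of_AF}, and \Cref{prop:range}, each of which was deliberately stated alongside its cotransfer-system dual. The only point requiring care is that the bijection $\WFS(P) \to \coTr(P)$ of \Cref{rem:halftransfer} is order-reversing (unlike the order-preserving one to $\Tr(P)$), so that ``minimal'' and ``maximal'' are not interchanged under dualization; but since the cited propositions already have their duals phrased in terms of $\coTr(P)$ with the correct variance, nothing further needs to be reconciled.
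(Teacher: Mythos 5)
Your proposal is correct and takes essentially the same route as the paper: the paper states this theorem as the immediate dual of \Cref{thm:AFW-is-lattice}, whose proof invokes exactly the results you cite (\Cref{prop:intersection_of_AF}, \Cref{prop:union_of_AF}, and \Cref{prop:range}), each of which was stated and proved together with its cotransfer-system dual. Your extra care in checking $\AC_{min},\AC_{max}\in\ACW$ and $K\subseteq\W$ before applying the dual of \Cref{prop:range} only makes explicit what the paper leaves implicit.
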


\begin{example}
We can use \Cref{thm:AFW-is-lattice} to recover the fact that every pair of partition $\W$ on $[n]$ with transfer system $T \subseteq \W$, defines a model structure on $[n]$. Since every wide decomposable subcategory $\W$ on $[n]$ is a partition, each $\W$ is automatically closed under pullbacks and pushouts and thus is both a transfer system and cotransfer system. By \Cref{prop:cotransferW}, $\W$ is a weak equivalence set and there exists a model structure with weak equivalences $\W$ and $\AF=T_{triv}$, and separately, a model structure with weak equivalences $\W$ and $\AC=K_{triv}$, and so $\AF=\W$. Thus, $\AF_{min} = T_{triv}$, $\AF_{max} = \W$, and $\AFW = [T_{triv}, W]$.

\end{example}

By \Cref{lem:vee-in-W} every weak equivalence set $\W$ of a finite lattice contains a maximum transfer system $\Tmax$, defined as the join of all transfer systems contained in $\W$. The transfer system $\AF_{max}$, on the other hand, is the maximum transfer system in the sublattice $\AFW$. By \Cref{prop:range}, these two maximal transfer systems must be equal. A similar argument gives the dual statement thus proving the following statement. 

\begin{proposition}\label{prop:AFmax=Tmax} Let $\W$ be a weak equivalence set on a finite lattice $P$. Then $\AF_{max} = \Tmax$ and $\AC_{max}=\Cmax$.
\end{proposition}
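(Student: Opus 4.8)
The plan is to prove the two inclusions $\AF_{max} \subseteq \Tmax$ and $\Tmax \subseteq \AF_{max}$ separately, and similarly for the dual. The first inclusion is immediate: $\AF_{max}$ is by \Cref{def:AFmax and ACmax} a join of transfer systems in $\AFW$, and every such transfer system is in particular a transfer system contained in $\W$; since $\Tmax$ is the join of \emph{all} transfer systems contained in $\W$, it follows that $\AF_{max} \subseteq \Tmax$. (Equivalently, one can simply observe that $\AF_{max}$ is itself a transfer system contained in $\W$, hence contained in $\Tmax$ by maximality.) Recall also from \Cref{def:Tmax} and the discussion following it, together with \Cref{lem:vee-in-W}, that $\Tmax \subseteq \W$.

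For the reverse inclusion I would use the hypothesis that $\W$ is a weak equivalence set, which by \Cref{def:weak-eq-set} means precisely that $\AFW \neq \varnothing$: fix any $T \in \AFW$. By the definition of $\Tmax$ as a join of transfer systems contained in $\W$ we have $T \subseteq \Tmax$, and we just noted $\Tmax \subseteq \W$, so the chain $T \subseteq \Tmax \subseteq \W$ holds with $\Tmax$ a transfer system. Now \Cref{prop:range} applies directly and yields $\Tmax \in \AFW$. Therefore $\Tmax \subseteq \AF_{max}$, and combining with the first paragraph gives $\AF_{max} = \Tmax$.

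The identity $\AC_{max} = \Cmax$ follows by the dual argument: one fixes a cotransfer system in $\ACW$ (nonempty again because $\W$ is a weak equivalence set), uses the definition of $\Cmax$ and the dual of \Cref{lem:vee-in-W} to sandwich it as $K \subseteq \Cmax \subseteq \W$, and invokes the dual statement of \Cref{prop:range} to conclude $\Cmax \in \ACW$, whence $\Cmax \subseteq \AC_{max} \subseteq \Cmax$. I do not anticipate a genuine obstacle here — the entire technical content has already been isolated in \Cref{prop:range} — so the only point requiring care is to explicitly record that $\AFW$ and $\ACW$ are nonempty, which is exactly what ``weak equivalence set'' provides, and that $\Tmax, \Cmax$ are contained in $\W$, which is \Cref{lem:vee-in-W}.
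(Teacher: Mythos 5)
Your proposal is correct and follows essentially the same route as the paper: the paper likewise observes that $\Tmax$ is the maximum transfer system contained in $\W$, that $\AF_{max}$ is the maximum of $\AFW$, and invokes \Cref{prop:range} (applied to some $T \in \AFW$, which exists since $\W$ is a weak equivalence set) to conclude the two coincide, with the dual argument for $\AC_{max}=\Cmax$. Your write-up merely makes explicit the details (nonemptiness of $\AFW$, the containments $T \subseteq \Tmax \subseteq \W$) that the paper leaves implicit.
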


\begin{remark}\label{rem:Wcotransfer} Given a wide decomposable subcategory $\W$ of a finite lattice $P$, by \Cref{prop:cotransferW}, $\W$ is a weak equivalence set with $\AF_{min}=T_{triv}$ if and only if $\W$ is a cotransfer system. Moreover, if $\AF_{min}=T_{triv}$, since $\AF_{max} = T_{max}$, by \Cref{thm:AFW-is-lattice}, it follows that \textit{every} transfer system in $\W$ defines a model structure. Thus, if $\W$ is a cotransfer system then every transfer system contained in $\W$ defines a model structure. Dually, if $\W$ is a transfer system then every cotransfer system in $\W$ defines a model structure.  

\end{remark}

 Since a model structure is uniquely determined either by  the data of $\W$ and $\AF$ or by the data of $\W$ and $\AC$, see \Cref{rem:redundancy}, there is a bijection between $\AFW$ and $\ACW$ sending the acyclic fibrations of a model structure to the acyclic cofibrations of the same model structure. This bijection gives a duality between $\AFW$ and $\ACW$.

\begin{theorem}\label{thm:duality-AF-AC}
    Let $\W$ be a weak equivalence set on a finite lattice $P$. Then there is an order-reversing bijection $\AFW \to \ACW$ given by the assignment
    \[
    T \longmapsto {}^\boxslash T \cap \W.
    \]
\end{theorem}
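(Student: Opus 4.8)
The plan is to construct the claimed map explicitly, show it lands in $\ACW$, exhibit an inverse, and verify both the map and its inverse reverse order. The assignment $T \mapsto {}^\boxslash T \cap \W$ is well-defined as a map $\AFW \to \ACW$: by \Cref{prop:cotransfer}, if $T \in \AFW$ then ${}^\boxslash T \cap \W$ is a cotransfer system, and it is precisely the acyclic cofibrations $\AC$ of the model structure produced in the proof of that theorem (with $\AF = T$), hence it lies in $\ACW$. Dually, define $\Phi \colon \ACW \to \AFW$ by $K \mapsto K^\boxslash \cap \W$; by the dual part of \Cref{prop:cotransfer}, $K^\boxslash \cap \W$ is a transfer system in $\AFW$.

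Next I would show these two maps are mutually inverse. This is the conceptual heart, and it is where I expect the one genuine subtlety. Given $T \in \AFW$, the associated model structure has $\AF = T$, $\C = {}^\boxslash T$, $\AC = {}^\boxslash T \cap \W$, and $\F = \AC^\boxslash$. By \Cref{rem:redundancy}, the model structure is determined by $\W$ together with $\AC$, and its acyclic fibrations are recovered as $\AF = \F \cap \W = (\AC^\boxslash) \cap \W$. Since $\AC = {}^\boxslash T \cap \W$, this says $T = ({}^\boxslash T \cap \W)^\boxslash \cap \W = \Phi(\mathcal{T})$ where $\mathcal{T} = {}^\boxslash T \cap \W$; that is, $\Phi \circ (T \mapsto {}^\boxslash T \cap \W) = \id$. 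The reverse composite $\id = (T \mapsto {}^\boxslash T \cap \W) \circ \Phi$ follows by the symmetric argument, using that a model structure is equally determined by $\W$ and $\AF$. The point requiring care is the identity $\F \cap \W = \AF$, i.e.\ that the acyclic fibrations of the model structure built in \Cref{prop:cotransfer} really do coincide with the input $T$ and not merely contain it; but this is exactly the content of the bullet $\AF = \F \cap \W$ in \Cref{rem:redundancy}, which holds in any model structure, so no extra work is needed beyond invoking it.

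Finally, order-reversal: if $T_1 \subseteq T_2$ in $\AFW$, then ${}^\boxslash T_2 \subseteq {}^\boxslash T_1$ (the operation ${}^\boxslash(-)$ is inclusion-reversing, as recorded in \Cref{rem:halftransfer} and the discussion of $\WFS(P)$), hence ${}^\boxslash T_2 \cap \W \subseteq {}^\boxslash T_1 \cap \W$. Thus $T \mapsto {}^\boxslash T \cap \W$ is order-reversing, and being a bijection with an order-reversing inverse (by the same argument applied to $\Phi$), it is an anti-isomorphism of lattices. In fact, combined with \Cref{thm:AFW-is-lattice} and \Cref{thm:ACW-is-lattice}, this exhibits the interval $[\AF_{min}, \AF_{max}]$ as anti-isomorphic to $[\AC_{min}, \AC_{max}]$, with $\AF_{min} \leftrightarrow \AC_{max}$ and $\AF_{max} \leftrightarrow \AC_{min}$. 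The main obstacle is not any single computation but keeping straight which half of \Cref{prop:cotransfer} and which bullet of \Cref{rem:redundancy} is being invoked at each step; once the bookkeeping is set up, every verification is immediate.
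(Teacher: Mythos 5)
Your proposal is correct and follows essentially the same route as the paper: the map pairs the acyclic fibrations of a model structure with its acyclic cofibrations, bijectivity comes from the fact that a model structure is determined by $(\W,\AF)$ or equally by $(\W,\AC)$ (via \Cref{rem:redundancy} and \Cref{prop:cotransfer}), and order-reversal comes from ${}^\boxslash(-)$ being inclusion-reversing. The only difference is that you make the inverse $K \mapsto K^\boxslash \cap \W$ explicit and verify the composites, which the paper leaves implicit in its appeal to uniqueness.
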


\begin{proof}
    This map sends the acyclic fibrations of a model structure to the corresponding acyclic cofibrations. The fact that model structures are uniquely determined by the pair of weak equivalences and acyclic fibrations, or by the pair of weak equivalences and acyclic cofibrations, together with \Cref{prop:cotransfer} give the statement about the bijection. To prove that this is order-reversing, note that if $T \subseteq T'$, then $^ \boxslash T' \subseteq {}^ \boxslash T$, and hence  $^ \boxslash T'\cap \W  \subseteq {}^ \boxslash T\cap \W$.
\end{proof}

Since the bijection of \Cref{thm:duality-AF-AC} is order reversing, it follows that $\AF_{max} \mapsto \AC_{min}$ and $\AF_{min} \mapsto \AC_{max}$. This, together with \Cref{prop:AFmax=Tmax}, gives us an effective way to compute $\AF_{min}$, and dually $\AC_{min}$.

\begin{corollary}\label{cor:dual-AFmin-ACmin} Let $\W$ be a weak equivalence set on a finite lattice $P$. Then
$$\AF_{min} = {K_{max}}^\boxslash \cap \W  \hspace{4pt}\text{ and }\hspace{4pt} \AC_{min} = {}^\boxslash T_{max} \cap \W.$$
Similarly, $$T_{max} = {\AC_{min}}^\boxslash \cap \W\hspace{4pt}\text{ and }\hspace{4pt} K_{max} = {}^\boxslash \AF_{min} \cap \W.$$
\end{corollary}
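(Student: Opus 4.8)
The plan is to obtain all four identities by feeding together three facts that are already available: the order-reversing bijection $\AFW \to \ACW$ of \Cref{thm:duality-AF-AC}, the interval descriptions $\AFW = [\AF_{min}, \AF_{max}]$ and $\ACW = [\AC_{min}, \AC_{max}]$ from \Cref{thm:AFW-is-lattice} and \Cref{thm:ACW-is-lattice}, and the equalities $\AF_{max} = T_{max}$ and $\AC_{max} = K_{max}$ from \Cref{prop:AFmax=Tmax}. The guiding principle is that an order-reversing bijection between two posets must send a maximum to a minimum and a minimum to a maximum; everything else is substitution of names.

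First I would set $\Phi \colon \AFW \to \ACW$, $T \mapsto {}^\boxslash T \cap \W$, the bijection of \Cref{thm:duality-AF-AC}, and identify its inverse $\Psi \colon \ACW \to \AFW$ as $K \mapsto K^\boxslash \cap \W$. The cleanest justification is the direct one: in any model structure on $P$ with weak equivalences $\W$ one has $\AF = \F \cap \W = \AC^\boxslash \cap \W$ (\Cref{rem:redundancy}), so $\Psi$ recovers the acyclic fibrations from the acyclic cofibrations of the same model structure, which is precisely the inverse of the assignment $\Phi$; alternatively one invokes the evident dual of \Cref{thm:duality-AF-AC}.

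Next I would note that $\Phi$ is an order-reversing bijection from $\AFW$, whose maximum is $\AF_{max}$ and whose minimum is $\AF_{min}$, onto $\ACW$, whose maximum is $\AC_{max}$ and whose minimum is $\AC_{min}$. Hence $\Phi(\AF_{max}) = \AC_{min}$ and $\Phi(\AF_{min}) = \AC_{max}$, and dually $\Psi(\AC_{max}) = \AF_{min}$ and $\Psi(\AC_{min}) = \AF_{max}$. (This is exactly the observation recorded in the paragraph preceding the statement.) Finally I would substitute $\AF_{max} = T_{max}$ and $\AC_{max} = K_{max}$ into these four equalities: $\Phi(\AF_{max}) = \AC_{min}$ becomes $\AC_{min} = {}^\boxslash T_{max} \cap \W$; $\Phi(\AF_{min}) = \AC_{max}$ becomes $K_{max} = {}^\boxslash \AF_{min} \cap \W$; $\Psi(\AC_{max}) = \AF_{min}$ becomes $\AF_{min} = {K_{max}}^\boxslash \cap \W$; and $\Psi(\AC_{min}) = \AF_{max}$ becomes $T_{max} = {\AC_{min}}^\boxslash \cap \W$. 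These are the claimed formulas.

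There is no genuine obstacle in this argument — it is pure bookkeeping once the earlier theorems are in hand. The only point that deserves a line of care is the explicit identification of the inverse bijection $\Psi$, since \Cref{thm:duality-AF-AC} is stated in only one direction; the one-line computation $\AF = \AC^\boxslash \cap \W$ above settles it.
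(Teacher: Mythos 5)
Your proposal is correct and follows essentially the same route as the paper: the corollary is obtained from the order-reversing bijection of \Cref{thm:duality-AF-AC} sending $\AF_{max}\mapsto\AC_{min}$ and $\AF_{min}\mapsto\AC_{max}$, combined with $\AF_{max}=T_{max}$ and $\AC_{max}=K_{max}$ from \Cref{prop:AFmax=Tmax}. Your extra care in identifying the inverse assignment $K\mapsto K^\boxslash\cap\W$ via $\AF=\F\cap\W=\AC^\boxslash\cap\W$ is a welcome refinement of a point the paper leaves implicit, but it does not change the argument.
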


We end this section by discussing how we can determine the sublattice $\AFW$ for a given weak equivalence set $\W$. We  determine $\ACW$ using a dual procedure. Since $\AFW$ is an interval lattice, it is determined by $\AF_{min}$ and $\AF_{max}$. By \Cref{prop:AFmax=Tmax}, $\AF_{max}$ is the maximum transfer system  in $\W$, which can be constructed explicitly using \cref{prop:Tmax-closure}. Given a weak equivalence set $\W$, we  directly calculate $\AF_{min}$  using the following steps.

\begin{enumerate}
\item Use \cref{prop:Kmax-closure} to find the largest cotransfer system $K_{max}$ that fits inside of $\W$. 
\item Use \Cref{prop:liftformula} to calculate ${K_{max}}^\boxslash=\mathcal{E}_u(K_{max})^c$.
\item Calculate $\AF_{min}:={K_{max}}^\boxslash \cap \W$.
\end{enumerate}
We dually calculate $\AC_{min}$ directly in a similar way. The following example illustrates this process.

\begin{example} 
    Consider the following wide decomposable category $\W$ on $[2]\times[2]$ with  maximum cotransfer system $\Cmax$ and transfer system $\Tmax$, which are calculated using \cref{prop:Kmax-closure} and \cref{prop:Tmax-closure}, respectively.

\[
    \begin{minipage}{0.3\textwidth}
    \centering
        \begin{tikzpicture}[>=stealth, bend angle=20, baseline=(current bounding box.center)]
    \fill (0,0) circle (2pt);
    \fill (1.5,0) circle (2pt);
    \fill (3,0) circle (2pt);
    \fill (0,1.5) circle (2pt);
    \fill (1.5,1.5) circle (2pt);
    \fill (3,1.5) circle (2pt);
    \fill (0,3) circle (2pt);
    \fill (1.5,3) circle (2pt);
    \fill (3,3) circle (2pt);
    
    \node (00) at (0,0) {};
    \node (10) at (1.5,0) {};
    \node (20) at (3,0) {};
    \node (01) at (0,1.5) {};
    \node (11) at (1.5,1.5) {};
    \node (21) at (3,1.5) {};
    \node (02) at (0,3) {};
    \node (12) at (1.5,3) {};
    \node (22) at (3,3) {};

    \draw[->] (00) -- (01);
    \draw[->] (01) --(02);
    \draw[->] (11)--(12);
    \draw[->] (20)--(21);
    \draw[->] (21)--(22);
    \draw[->, bend left] (00) to (02);
    \draw[->, bend right] (20) to (22);

    \draw[lightgray] (-0.5,-0.5)--(-0.5,3.5)--(3.5,3.5)--(3.5,-0.5)--(-0.5,-0.5);
\end{tikzpicture}

$\W$

    \end{minipage}
    \begin{minipage}{0.3\textwidth}
    \centering
        \begin{tikzpicture}[>=stealth, bend angle=20, baseline=(current bounding box.center)]
    \fill (0,0) circle (2pt);
    \fill (1.5,0) circle (2pt);
    \fill (3,0) circle (2pt);
    \fill (0,1.5) circle (2pt);
    \fill (1.5,1.5) circle (2pt);
    \fill (3,1.5) circle (2pt);
    \fill (0,3) circle (2pt);
    \fill (1.5,3) circle (2pt);
    \fill (3,3) circle (2pt);
    
    \node (00) at (0,0) {};
    \node (10) at (1.5,0) {};
    \node (20) at (3,0) {};
    \node (01) at (0,1.5) {};
    \node (11) at (1.5,1.5) {};
    \node (21) at (3,1.5) {};
    \node (02) at (0,3) {};
    \node (12) at (1.5,3) {};
    \node (22) at (3,3) {};

    \draw[->] (01) --(02);
    \draw[->] (11)--(12);
    \draw[->] (20)--(21);
    \draw[->] (21)--(22);
    \draw[->, bend right] (20) to (22);

    \draw[lightgray] (-0.5,-0.5)--(-0.5,3.5)--(3.5,3.5)--(3.5,-0.5)--(-0.5,-0.5);
\end{tikzpicture}

$\Cmax$
    \end{minipage}
    \begin{minipage}{0.3\textwidth}
    \centering
        \begin{tikzpicture}[>=stealth, bend angle=20, baseline=(current bounding box.center)]
    \fill (0,0) circle (2pt);
    \fill (1.5,0) circle (2pt);
    \fill (3,0) circle (2pt);
    \fill (0,1.5) circle (2pt);
    \fill (1.5,1.5) circle (2pt);
    \fill (3,1.5) circle (2pt);
    \fill (0,3) circle (2pt);
    \fill (1.5,3) circle (2pt);
    \fill (3,3) circle (2pt);
    
    \node (00) at (0,0) {};
    \node (10) at (1.5,0) {};
    \node (20) at (3,0) {};
    \node (01) at (0,1.5) {};
    \node (11) at (1.5,1.5) {};
    \node (21) at (3,1.5) {};
    \node (02) at (0,3) {};
    \node (12) at (1.5,3) {};
    \node (22) at (3,3) {};

    \draw[->] (01) --(02);
    \draw[->] (11)--(12);
    \draw[->] (00)--(01);
    \draw[->] (21)--(22);
    \draw[->, bend left] (00) to (02);

    \draw[lightgray] (-0.5,-0.5)--(-0.5,3.5)--(3.5,3.5)--(3.5,-0.5)--(-0.5,-0.5);
\end{tikzpicture}

$\Tmax$
    \end{minipage}
\]
We first use \cref{prop:liftformula} to compute $\Cmax^\boxslash$, from which we obtain $\Cmax^\boxslash\cap\W$, both of which are shown below. Since $\Cmax{^\boxslash}\cap \W$ is a transfer system, \Cref{prop:cotransfer}, implies that $\W$ is a weak equivalence set. Additionally, by \Cref{cor:dual-AFmin-ACmin}, we get $\AF_{min} = \Cmax{^\boxslash}\cap \W$.
\[
\begin{minipage}{0.35\textwidth}
\centering
\begin{tikzpicture}[>=stealth, bend angle=20, baseline=(current bounding box.center)]
    \fill (0,0) circle (2pt);
    \fill (1.5,0) circle (2pt);
    \fill (3,0) circle (2pt);
    \fill (0,1.5) circle (2pt);
    \fill (1.5,1.5) circle (2pt);
    \fill (3,1.5) circle (2pt);
    \fill (0,3) circle (2pt);
    \fill (1.5,3) circle (2pt);
    \fill (3,3) circle (2pt);
    
    \node (00) at (0,0) {};
    \node (10) at (1.5,0) {};
    \node (20) at (3,0) {};
    \node (01) at (0,1.5) {};
    \node (11) at (1.5,1.5) {};
    \node (21) at (3,1.5) {};
    \node (02) at (0,3) {};
    \node (12) at (1.5,3) {};
    \node (22) at (3,3) {};

    \draw[->] (00)--(01);
    \draw[->, bend left] (00) to (02);
    \draw[->] (00)--(12);
    \draw[->] (00)--(11);
    \draw[->] (00)--(10);
    \draw[->, bend left] (00) to (22);
    \draw[->] (00)--(21);
    \draw[->, bend right] (00) to (20);
    \draw[white, line width=4pt] (01)--(11);
    \draw[->] (01)--(11);
    \draw[->, bend left] (01) to (21);
    \draw[->] (02)--(12);
    \draw[->, bend left] (02) to (22);
    \draw[white, line width=4pt] (10)--(11);
    \draw[->] (10)--(11);
    \draw[->] (10)--(20);    
    \draw[->, bend right] (10) to (12);    
    \draw[->] (10)--(22);
    \draw[->] (10)--(21);
    \draw[white, line width=4pt] (11)--(21);
    \draw[->] (11)--(21);
    \draw[->] (12)--(22);

    \draw[lightgray] (-0.5,-0.5)--(-0.5,3.5)--(3.5,3.5)--(3.5,-0.5)--(-0.5,-0.5);
\end{tikzpicture}

$\Cmax^\boxslash$
\end{minipage}
\begin{minipage}{0.35\textwidth}
\centering
\begin{tikzpicture}[>=stealth, bend angle=20, baseline=(current bounding box.center)]
    \fill (0,0) circle (2pt);
    \fill (1.5,0) circle (2pt);
    \fill (3,0) circle (2pt);
    \fill (0,1.5) circle (2pt);
    \fill (1.5,1.5) circle (2pt);
    \fill (3,1.5) circle (2pt);
    \fill (0,3) circle (2pt);
    \fill (1.5,3) circle (2pt);
    \fill (3,3) circle (2pt);
    
    \node (00) at (0,0) {};
    \node (10) at (1.5,0) {};
    \node (20) at (3,0) {};
    \node (01) at (0,1.5) {};
    \node (11) at (1.5,1.5) {};
    \node (21) at (3,1.5) {};
    \node (02) at (0,3) {};
    \node (12) at (1.5,3) {};
    \node (22) at (3,3) {};

    \draw[->] (00)--(01);
    \draw[->, bend left] (00) to (02);

    \draw[lightgray] (-0.5,-0.5)--(-0.5,3.5)--(3.5,3.5)--(3.5,-0.5)--(-0.5,-0.5);
\end{tikzpicture}

$\Cmax^\boxslash\cap \W$
\end{minipage}
\] 
We can further determine all model structures with weak equivalences $\W$. We know that $\AF_{max} = \Tmax$. Thus, $\AFW$ consists of the four transfer systems $T$ such that $\AF_{min} \le T \le \AF_{max}$, which are constructed by successively adding the top  short vertical arrows of $\W$ from left to right.
\end{example}

\section{Weak Equivalence Sets on Finite Lattices}\label{sec:centers}

The goal of this section is to give necessary and sufficient conditions for a wide decomposable subcategory $Q$ of a finite lattice $P$ to be the class of weak equivalences for a model structure. We do so by leveraging the results from the previous section, which characterized the conditions necessary for a potential pair of weak equivalences and acyclic fibrations to give a model structure.

\begin{example}\label{ex:burgerfailnocenter}  
Any wide decomposable subcategory $Q$ of $[2]\times [1]$ that contains the middle vertical arrow but neither of the other two vertical arrows is neither closed under pushouts nor pullbacks. As explained in \Cref{ex:middle-arrow}, such a set is then not a weak equivalence set. Thus, none of the wide decomposable subcategories shown below are weak equivalence sets.
 
\[ 
\begin{tikzpicture}[>=stealth, bend angle=20, baseline=(current bounding box.center)]
    \fill (-3,0) circle (2pt);
    \fill (-1.5,0) circle (2pt);
    \fill (0,0) circle (2pt);
    \fill (-3,1.5) circle (2pt);
    \fill (-1.5,1.5) circle (2pt);
    \fill (0,1.5) circle (2pt);

    \fill (3,0) circle (2pt);
    \fill (4.5,0) circle (2pt);
    \fill (6,0) circle (2pt);
    \fill (3,1.5) circle (2pt);
    \fill (4.5,1.5) circle (2pt);
    \fill (6,1.5) circle (2pt);

    \fill (-3,-3) circle (2pt);
    \fill (-1.5,-3) circle (2pt);
    \fill (0,-3) circle (2pt);
    \fill (-3,-1.5) circle (2pt);
    \fill (-1.5,-1.5) circle (2pt);
    \fill (0,-1.5) circle (2pt);

    \fill (3,-3) circle (2pt);
    \fill (4.5,-3) circle (2pt);
    \fill (6,-3) circle (2pt);
    \fill (3,-1.5) circle (2pt);
    \fill (4.5,-1.5) circle (2pt);
    \fill (6,-1.5) circle (2pt);
    
    \node (00tl) at (-3,0) {};
    \node (10tl) at (-1.5,0) {};
    \node (20tl) at (0,0) {};
    \node (01tl) at (-3,1.5) {};
    \node (11tl) at (-1.5,1.5) {};
    \node (21tl) at (0,1.5) {};

    \node (00tr) at (3,0) {};
    \node (10tr) at (4.5,0) {};
    \node (20tr) at (6,0) {};
    \node (01tr) at (3,1.5) {};
    \node (11tr) at (4.5,1.5) {};
    \node (21tr) at (6,1.5) {};

    \node (00bl) at (-3,-3) {};
    \node (10bl) at (-1.5,-3) {};
    \node (20bl) at (0,-3) {};
    \node (01bl) at (-3,-1.5) {};
    \node (11bl) at (-1.5,-1.5) {};
    \node (21bl) at (0,-1.5) {};

    \node (00br) at (3,-3) {};
    \node (10br) at (4.5,-3) {};
    \node (20br) at (6,-3) {};
    \node (01br) at (3,-1.5) {};
    \node (11br) at (4.5,-1.5) {};
    \node (21br) at (6,-1.5) {};

    \draw[->] (10tl)--(11tl);

    \draw[->] (10tr)--(11tr);
    \draw[->] (01tr)--(11tr);

    \draw[->] (10bl)--(11bl);
    \draw[->] (10bl)--(20bl);

    \draw[->] (01br)--(11br);
    \draw[->] (10br)--(11br);
    \draw[->] (10br)--(20br);

    \draw[lightgray] (-3.5,-0.5)--(0.5,-0.5)--(0.5,2)--(-3.5,2)--(-3.5,-0.5);
    \draw[lightgray] (2.5,-0.5)--(6.5,-0.5)--(6.5,2)--(2.5,2)--(2.5,-0.5);
    \draw[lightgray] (-3.5,-3.5)--(0.5,-3.5)--(0.5,-1)--(-3.5,-1)--(-3.5,-3.5);
    \draw[lightgray] (2.5,-3.5)--(6.5,-3.5)--(6.5,-1)--(2.5,-1)--(2.5,-3.5);

\end{tikzpicture}
\]
\end{example}

This leads us to an important observation that we  use heavily throughout this section. Recall from \Cref{rem:shortarrowsareeither} that every short arrow which is a weak equivalence has to be either a fibration or cofibration, but not both.

\begin{remark}\label{rem:shortarrows}
As the set $\AF$ is closed under pullbacks and the set $\AC$ is closed under pushouts, for every short arrow $\sigma$ in a weak equivalence set $\W$, either all pushouts of $\sigma$ are in $\W$ or all pullbacks of $\sigma$ are in $\W$.
\end{remark}

 While \Cref{rem:shortarrows} gives a  property of short arrows that must be satisfied in a weak equivalence set, this property is not sufficient to test \textit{if} a given wide decomposable subcategory  is  a weak equivalence set, as the next example shows. 
 
 \begin{example}\label{ex:cond1butnot2}
 Consider the wide decomposable subcategory $Q$ of $[2] \times [2]$ shown below by depicting only its short arrows. These short arrows satisfy the condition that either all their pushouts or all their pullbacks are  in $Q$. However, we will show there is no model structure with weak equivalence class $Q$.
\[
\begin{tikzpicture}[>=stealth, bend angle=20, baseline=(current bounding box.center)]
    \fill (0,0) circle (2pt);
    \fill (1.5,0) circle (2pt);
    \fill (3,0) circle (2pt);
    \fill (0,1.5) circle (2pt);
    \fill (1.5,1.5) circle (2pt);
    \fill (3,1.5) circle (2pt);
    \fill (0,3) circle (2pt);
    \fill (1.5,3) circle (2pt);
    \fill (3,3) circle (2pt);
    
    \node (00) at (0,0) {};
    \node (10) at (1.5,0) {};
    \node (20) at (3,0) {};
    \node (01) at (0,1.5) {};
    \node (11) at (1.5,1.5) {};
    \node (21) at (3,1.5) {};
    \node (02) at (0,3) {};
    \node (12) at (1.5,3) {};
    \node (22) at (3,3) {};

    \draw[->] (00) -- (01);
    \draw[->] (10)--(11);
    \draw[->] (11)--(12);
    \draw[->] (21)--(22);

    \node at (1.3,0.75) {\small $f$};
    \node at (1.3,2.25) {\small $g$};
\end{tikzpicture}
\]

If $Q$ is a weak equivalence set, then every short arrow in $Q$ is either in $\AC$ or in $\AF$. Since $\AC$ is closed under pushouts and $\AF$ is closed under pullbacks, the arrow $g\colon (1,1) \rightarrow (1,2)$ must be in $\AC$ and the arrow $f\colon (1,0) \rightarrow (1,1)$ must be in $\AF$. 
Consider the middle arrow $(1,0)\rightarrow(1,2)$, noting that $g\circ f$   is its only nontrivial decomposition.
Since $g\circ f\in Q=\AF\circ \AC$, it follows that $g\circ f$ has to be in either $\AC$ or $\AF$. However, neither its pushout nor its pullback is in $Q$, so there can be no model structure with weak equivalences given by $Q$.

In contrast, consider the wide decomposable subcategory $Q$ of $[2] \times [2]$ shown below with the composition arrow omitted. As we will see in the remainder of this section, this is a weak equivalence set. The obstruction seen in the first case is not present here.
\[
    \begin{tikzpicture}[>=stealth, bend angle=20, baseline=(current bounding box.center)]
    \fill (0,0) circle (2pt);
    \fill (1.5,0) circle (2pt);
    \fill (3,0) circle (2pt);
    \fill (0,1.5) circle (2pt);
    \fill (1.5,1.5) circle (2pt);
    \fill (3,1.5) circle (2pt);
    \fill (0,3) circle (2pt);
    \fill (1.5,3) circle (2pt);
    \fill (3,3) circle (2pt);
    
    \node (00) at (0,0) {};
    \node (10) at (1.5,0) {};
    \node (20) at (3,0) {};
    \node (01) at (0,1.5) {};
    \node (11) at (1.5,1.5) {};
    \node (21) at (3,1.5) {};
    \node (02) at (0,3) {};
    \node (12) at (1.5,3) {};
    \node (22) at (3,3) {};

    \draw[->] (01) -- (02);
    \draw[->] (10)--(11);
    \draw[->] (11)--(12);
    \draw[->] (20)--(21);
\end{tikzpicture}\]

\end{example}

Inspired by these examples, we introduce the following condition.  We  prove that this is sufficient for a wide decomposable subcategory to be a weak equivalence set in \Cref{thm:allaboutw}.

 \begin{conditions}\label{assumption}
 Let $Q$ be a wide decomposable subcategory of a finite lattice $P$. For all morphisms $f$ in $Q$, there exists a factorization $f = \sigma_n \circ \sigma_{n-1} \circ \ldots \circ \sigma_1$ into short arrows such that for some $0 \leq k \leq n$, both of the following hold:
        \begin{itemize}
            \item for any $i \leq k$, all pushouts of $\sigma_i$ are in $Q$, and
            \item for any $i > k$, all pullbacks of $\sigma_i$ are in $Q$.
        \end{itemize}
\end{conditions}

In the particular case when $f$ is a short arrow, \Cref{assumption} implies that either all of its pullbacks or all of its pushouts are in $Q$. This property had already been established for weak equivalence sets in \Cref{rem:shortarrows}.

\begin{example} Recall the two cases of \cref{ex:cond1butnot2}. In the first case, $Q$ satisfies \Cref{assumption} for short arrows, but the composite arrow fails the condition. In the second, $Q$ satisfies \Cref{assumption}. 
\end{example}

\begin{example}
The four wide decomposable subcategories  given in \Cref{ex:burgerfailnocenter} are  the only wide decomposable subcategories of $[2]\times [1]$ that fail \Cref{assumption}.
\end{example}

We now show that if a wide decomposable subcategory $Q$ satisfies \Cref{assumption} then there is a model  structure with weak equivalences $Q$.

\begin{proposition}\label{prop:Tmax-and-W-closed-under-pushouts} Let $Q$ be a wide decomposable subcategory of a finite lattice $P$. If $Q$ satisfies \Cref{assumption}, then ${}^\boxslash \Tmax \cap Q$ is closed under pushouts and hence is a cotransfer system.
\end{proposition}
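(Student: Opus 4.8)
The plan is to verify directly that $Z:={}^\boxslash\Tmax\cap Q$ is a wide subcategory of $P$ closed under pushouts, and then invoke \Cref{prop:TS-and-coTS-categorically}(2). Since $\Tmax$ is a transfer system, $({}^\boxslash\Tmax,\Tmax)$ is a weak factorization system by \Cref{rem:halftransfer}, so ${}^\boxslash\Tmax$ is a cotransfer system; thus ${}^\boxslash\Tmax$ and $Q$ are both wide subcategories, and so is their intersection $Z$. Being the left class of a weak factorization system, ${}^\boxslash\Tmax$ is in particular closed under pushouts, so for $f\colon a\to b$ in $Z$ and any morphism $a\to c$, the pushout of $f$ along $a\to c$ in $P$ --- the morphism $g\colon c\to b\vee c$ --- already lies in ${}^\boxslash\Tmax$. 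Hence the whole problem reduces to showing $g\in Q$, which is where \Cref{assumption} will be used.

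To that end I would first dispose of the case where $f$ is an identity (trivial), and then apply \Cref{assumption} to $f$, discarding identity factors so as to obtain a factorization $f=\sigma_n\circ\cdots\circ\sigma_1$ into non-identity short arrows $\sigma_i\colon a_{i-1}\to a_i$, with $a=a_0<\cdots<a_n=b$ and a pivot $0\le k\le n$ (renumbered after the discarding) such that all pushouts of $\sigma_i$ lie in $Q$ for $i\le k$ and all pullbacks of $\sigma_i$ lie in $Q$ for $i>k$. The key point I would establish is that $k=n$. If $k<n$, then $\sigma_n$ is a short arrow of $Q$ (note $\sigma_n\in Q$ as $Q$ is decomposable and $f\in Q$) all of whose pullbacks lie in $Q$, so $\sigma_n\in\Tmax$ by \Cref{prop:Tmax-closure}; but then $a\le a_{n-1}<b$ with $a_{n-1}\to b=\sigma_n$ in $\Tmax$, witnessing $f\in\mathcal{E}_d(\Tmax)$, so $f\notin{}^\boxslash\Tmax$ by \Cref{prop:liftformula}, contradicting $f\in Z$. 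Hence $k=n$, and all pushouts of all the $\sigma_i$ lie in $Q$.

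Finally I would build $g$ as a composite of pushouts: setting $c_i:=a_i\vee c$, one has $c_0=c$ (as $a\le c$) and $c_n=b\vee c$, and each morphism $c_{i-1}\to c_i$ is the pushout of $\sigma_i\colon a_{i-1}\to a_i$ along $a_{i-1}\to a_{i-1}\vee c=c_{i-1}$, because $a_i\vee c_{i-1}=a_i\vee c=c_i$. Since $k=n$, each $c_{i-1}\to c_i$ lies in $Q$, and since $Q$ is closed under composition, $g\colon c=c_0\to\cdots\to c_n=b\vee c$ lies in $Q$. Therefore $g\in Z$, so $Z$ is closed under pushouts, and \Cref{prop:TS-and-coTS-categorically}(2) identifies $Z$ as a cotransfer system.

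The main obstacle, as indicated, is the argument that $k=n$: this is the sole place the hypothesis $f\in{}^\boxslash\Tmax$ is genuinely needed, and it relies on the fact --- from \Cref{prop:Tmax-closure}, or equivalently \Cref{rem:maximality} --- that any short arrow of $Q$ whose pullbacks all lie in $Q$ is already in $\Tmax$, which then forces $f$ into the downward extension $\mathcal{E}_d(\Tmax)$. Everything after that is routine lattice bookkeeping.
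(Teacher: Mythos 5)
Your proof is correct and takes essentially the same approach as the paper's: apply \Cref{assumption} to factor the morphism, use \Cref{prop:Tmax-closure} to place the short arrows with pullbacks in $Q$ inside $\Tmax$, and use membership in ${}^\boxslash\Tmax$ to force the pullback-type part to be trivial. The differences are cosmetic: you argue directly (detecting $f\in\mathcal{E}_d(\Tmax)$ and invoking \Cref{prop:liftformula}) where the paper argues the contrapositive by exhibiting a lifting square with no lift, and you make explicit the composite-of-pushouts bookkeeping that the paper leaves implicit.
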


\begin{proof}
    Let $f\colon x \to z$ be an arrow in $Q$ such that not all pushouts of $f$ are in $Q$. We will show that $f \notin {}^\boxslash \Tmax$, which will imply that for any arrow in ${}^\boxslash \Tmax \cap Q$, all of the pushouts of that arrow are also in ${}^\boxslash \Tmax \cap Q$.
     
\Cref{assumption} implies that we can factor $f=f_2 \circ f_1$, where $f_2$ is a composition of short arrows whose pullbacks are in $Q$ and $f_1$ is a composition of short arrows whose pushouts are in $Q$. By our assumption that not all pushouts of $f$ are in $Q$, we know that $f_2$ is nontrivial. We can write this as the following commutative square.

\[\begin{tikzcd}
	z && z \\
	\\
	x && y.
	\arrow[shift left, no head, from=1-1, to=1-3]
	\arrow[shift right, no head, from=1-1, to=1-3]
	\arrow[dashed, from=1-1, to=3-3]
	\arrow["f", from=3-1, to=1-1]
	\arrow["{f_1}"', from=3-1, to=3-3]
	\arrow["{f_2}"', from=3-3, to=1-3]
\end{tikzcd}\]
As $f_2$ is a composite of short arrows whose pullbacks are in $Q$, it follows from \cref{prop:Tmax-closure} that $f_2 \in \Tmax$. Because $y \neq z$, there is no lift in this commutative square, \emph{i.e.}, $f \notin {}^\boxslash \Tmax$, which was our claim. \end{proof} 

Finally, given a wide decomposable subcategory $Q$, not only is \Cref{assumption}  sufficient  for $Q$ to be a weak equivalence set, it is necessary.

\begin{theorem}\label{thm:allaboutw}
    Let $Q$ be a wide decomposable subcategory of a finite lattice $P$. The following are equivalent:
    \begin{enumerate}
    \item there is a model structure with $Q$ as the class of weak equivalences, 
      \item $(Q, \Tmax)$ gives a model structure, and
    \item $Q$ satisfies  \Cref{assumption}.   
    \end{enumerate}
\end{theorem}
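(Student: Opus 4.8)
The plan is to establish the cycle of implications $(2)\Rightarrow(1)\Rightarrow(3)\Rightarrow(2)$. The implication $(2)\Rightarrow(1)$ is immediate, since a model structure with weak equivalences $Q$ and acyclic fibrations $\Tmax$ is in particular a model structure with weak equivalences $Q$. For $(3)\Rightarrow(2)$ I would invoke \Cref{prop:Tmax-and-W-closed-under-pushouts}: if $Q$ satisfies \Cref{assumption}, then ${}^\boxslash\Tmax\cap Q$ is a cotransfer system on $P$. Since $\Tmax$ is a transfer system contained in $Q$, \Cref{prop:cotransfer} (applied with $\W=Q$ and $T=\Tmax$) then yields a model structure with weak equivalences $Q$ and acyclic fibrations $\Tmax$, which is precisely the statement that $(Q,\Tmax)$ gives a model structure.

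The substantive direction is $(1)\Rightarrow(3)$. Suppose there is a model structure with weak equivalences $Q$; write $\AF$ and $\AC$ for its acyclic fibrations and acyclic cofibrations, so that $Q=\AF\circ\AC$ by \Cref{def:WFSmodel}. Fix $f\in Q$ and factor it as $f=p\circ i$ with $i\in\AC$ and $p\in\AF$; such a factorization exists exactly because $f\in Q=\AF\circ\AC$. Now $\AF$ is a transfer system contained in $Q$, hence $\AF\subseteq\Tmax$ by the maximality in \Cref{def:Tmax}, and dually $\AC\subseteq\Cmax$. By \Cref{prop:Tmax-closure}, $\Tmax$ is the closure under composition of the short arrows of $Q$ whose pullbacks all lie in $Q$ (together with isomorphisms), so, discarding identity factors, we may write $p=\sigma_n\circ\cdots\circ\sigma_{k+1}$ with each $\sigma_j$ a short arrow all of whose pullbacks are in $Q$. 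Symmetrically, \Cref{prop:Kmax-closure} gives $i=\sigma_k\circ\cdots\circ\sigma_1$ with each $\sigma_j$ a short arrow all of whose pushouts are in $Q$. Concatenating yields $f=\sigma_n\circ\cdots\circ\sigma_1$, a factorization into short arrows with split index $k$ exactly as \Cref{assumption} demands (the degenerate cases where $i$ or $p$ is an identity give $k=0$ or $k=n$).

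I expect the only point requiring care to be the availability of these short-arrow factorizations inside $\Tmax$ and $\Cmax$, rather than merely inside $Q$; this is exactly what \Cref{prop:Tmax-closure} and \Cref{prop:Kmax-closure} provide, and they rest in turn on the saturation of $\Tmax$ and $\Cmax$ from \Cref{prop:Tmax-saturated} (which uses that $Q$ is wide and decomposable). Everything else is bookkeeping: the inclusions $\AF\subseteq\Tmax$, $\AC\subseteq\Cmax$ come from the defining maximality of $\Tmax$ and $\Cmax$, and the relabeling of the concatenated factorization is routine. It is also worth remarking that the special case of \Cref{assumption} for short arrows recovers \Cref{rem:shortarrows}, which serves as a sanity check on the statement.
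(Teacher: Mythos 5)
Your proposal is correct, and the engine of the hard direction is the same as the paper's: factor a weak equivalence as an acyclic cofibration followed by an acyclic fibration, then refine each piece into short arrows using the saturation/closure results for $\Cmax$ and $\Tmax$ (\Cref{prop:Tmax-saturated}, \Cref{prop:Tmax-closure}, \Cref{prop:Kmax-closure}). The organization differs, though, in a way worth noting. The paper runs $(2)\Rightarrow(1)$ trivially, $(1)\Rightarrow(2)$ by citing \Cref{prop:AFmax=Tmax}, $(3)\Rightarrow(2)$ via \Cref{prop:Tmax-and-W-closed-under-pushouts} and \Cref{prop:cotransfer}, and then proves $(2)\Rightarrow(3)$ using the \emph{specific} model structure $(Q,\Tmax)$, identifying its acyclic cofibrations as $\AC_{min}$ via \Cref{cor:dual-AFmin-ACmin}. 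You instead close the cycle $(2)\Rightarrow(1)\Rightarrow(3)\Rightarrow(2)$, proving $(1)\Rightarrow(3)$ directly for an \emph{arbitrary} model structure with weak equivalences $Q$: you only need the easy containments $\AF\subseteq\Tmax$ and $\AC\subseteq\Cmax$ coming from the defining maximality, not the stronger fact that $\Tmax$ is itself realized as a class of acyclic fibrations. This makes your argument slightly leaner — $(1)\Rightarrow(2)$ then falls out of the cycle rather than requiring \Cref{prop:AFmax=Tmax} and the $\AF_{min}$/$\AC_{min}$ machinery — while the paper's version has the side benefit of exhibiting the acyclic cofibrations of the distinguished model structure $(Q,\Tmax)$ explicitly. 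Your parenthetical care about degenerate factors (identities, $k=0$ or $k=n$) and the observation that \Cref{prop:Tmax-closure} and \Cref{prop:Kmax-closure} rest on decomposability of $Q$ are exactly the right points to flag; I see no gap.
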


\begin{proof}

First,  $(1)$ follows directly from $(2)$. Further,  $(1)$ implies $(2)$ by \Cref{prop:AFmax=Tmax}, and $(3)$ implies $(2)$ by \Cref{prop:Tmax-and-W-closed-under-pushouts} together with  \Cref{prop:cotransfer}.

Lastly, we  show that $(2)$ implies $(3)$. Since $(Q,\Tmax)$ gives a model structure, every short arrow in $Q$ is either a fibration or a cofibration. Since fibrations are closed under pullbacks and cofibrations are closed under pushouts, all short arrows of $Q$ satisfy \Cref{assumption}. We use the factorization axiom to show the result for the general case. Since $(Q,\Tmax)$ is a model structure, by \Cref{cor:dual-AFmin-ACmin}, the corresponding acyclic cofibrations are given by $\AC_{min}$. Thus, if $f$ is a morphism in $Q$ then $f=g\circ h$ where $g\in \Tmax$ and $h\in \AC_{min}$. By \cref{prop:Tmax-saturated}, $\Tmax$ is saturated. Hence, $g$ can be written as a composition of short arrows in $\Tmax$, and by definition, every pullback of every arrow in $\Tmax$ is in $Q$. Similarly, since $h\in \AC_{min} \subseteq \Cmax$, $h$ can be written as a composition of short arrows in $\Cmax$, all of whose pushouts are in $Q$. Thus, we can factor every morphism $f$ in $Q$ following the requirements of \Cref{assumption}.
\end{proof}

\begin{remark}
    When $P=[m]\times [n]$ for $m,n >0$, \Cref{assumption} is easier to visualize, partly because we claim that  \Cref{assumption} only needs to be checked for vertical and horizontal arrows in $Q$.
    
    The nontrivial pullbacks of a short vertical arrow $(i,j)\to(i,j+1)$ are all arrows of the form $(k,j)\to (k,j+1)$ for $k<i$, \emph{i.e.}, the vertical arrows to the left on the same row.  Similarly, the pullbacks of a short horizontal arrow $(i,j)\to(i+1,j)$ are all arrows of the form $(i,k)\to (i+1,k)$ for $k<j$, \emph{i.e.}, the horizontal arrows below in the same column. Therefore, \Cref{assumption} tells us that for every short vertical arrow in $Q$, either all arrows to the left of it are in $Q$ or all arrows to the right of it are in $Q$. Similarly, for every horizontal short arrow in $Q$, either all short arrows above it or below it are also in $Q$. For a long vertical arrow $f$ in $Q$, the condition is equivalent to saying that there exists a horizontal line such that all short vertical arrows in the grid below the line and to the right of $f$  and all short vertical arrows above the line and to the left of $f$ must be in $Q$. For horizontal arrows the condition is similar. 

    Let $f\colon (i,j) \longrightarrow (k,\ell)$ be an arrow in $Q$. We call $\ell-j$ the \emph{vertical height} of $f$. We prove the claim that \Cref{assumption} only needs to be checked for vertical and horizontal arrows by induction on the vertical height.
    
    The base case is the assumption that horizontal arrows satisfy \Cref{assumption}, as these are precisely those arrows with vertical height 0. 
    
    Let $f\colon (i,j) \rightarrow (k,\ell)$ be an arrow in $Q$ of vertical height at least 1. This means that $f$ has a factorization which contains at least one vertical short arrow. More precisely, there is a factorization of $f$ into short arrows such that the last short arrow is vertical. We call this arrow $\sigma$. Since $Q$ is decomposable, all arrows within the rectangle spanned by $f$,
    \[
    \{(x,y) \in P \,\,|\,\, i \leq x \leq k, \, j \leq y \leq \ell \},
    \]
    must also be in $Q$, in particular all short arrows in this rectangle, including $\sigma$, are in $Q$. 
    \[
    \begin{tikzpicture}[>=stealth, bend angle=10, baseline=(current bounding box.center)]
    \fill (0,0) circle (1pt);
    \fill (0.75,0) circle (1pt);
    \fill (1.5,0) circle (1pt);
    \fill (2.25,0) circle (1pt);
    \fill (3,0) circle (1pt);
    \fill (0,0.75) circle (1pt);
    \fill (0.75,0.75) circle (1pt);
    \fill (1.5,0.75) circle (1pt);
    \fill (2.25,0.75) circle (1pt);
    \fill (3,0.75) circle (1pt);
    \fill (0,1.5) circle (1pt);
    \fill (0.75,1.5) circle (1pt);
    \fill (1.5,1.5) circle (1pt);
    \fill (2.25,1.5) circle (1pt);
    \fill (3,1.5) circle (1pt);
    \fill (0,2.25) circle (1pt);
    \fill (0.75,2.25) circle (1pt);
    \fill (1.5,2.25) circle (1pt);
    \fill (2.25,2.25) circle (1pt);
    \fill (3,2.25) circle (1pt);

    \node (00) at (0,0) {};
    \node (10) at (1.5,0) {};
    \node (20) at (3,1.5) {};
    \node (01) at (0,1.5) {};
    \node (11) at (1.5,1.5) {};
    \node (21) at (3,1.5) {};
    \node (02) at (0,2.25) {};
    \node (12) at (1.5,2.25) {};
    \node (22) at (3,2.25) {};
    \node at (3.5,1.75){\small $\sigma$};
    \node at (1.7,1.8){\small $f$};
    \node at (1.7,.3){\small $f'$};
    \node at (1.3,2.8){\small $h$};
    \node at (-.3,1){\small $g$};
     \node at (0.2,1.75){\small $\tau$};

    \draw[->, bend left] (00) to (22);
    \draw[->] (20) to (22);
    \draw[->, bend right] (00) to (20);
    \draw[->, bend left] (00) to (02);
    \draw[->, bend left] (02) to (22);

\draw[->] (01) to (02);

\end{tikzpicture}\]
Either all  pullbacks of $\sigma$ or all  pushouts of $\sigma$ must be in $Q$ by assumption. 

If all the pullbacks are in $Q$, we consider the arrow $f'\colon (i,j)\to (k, \ell-1)$ (see diagram), which by the induction hypothesis has a decomposition that satisfies \Cref{assumption}. Then this decomposition, together with $\sigma$, is a decomposition of $f$ that satisfies \Cref{assumption}.

If not all pullbacks of $\sigma$ are in $Q$, 
consider the  vertical arrow $g\colon (i,j) \rightarrow (i,\ell)$. There is only one way of factoring $g$ into short arrows, and we call the last arrow of this factorization $\tau$.
As not all pullbacks of $\sigma$ are in $Q$, $\tau$ also has a pullback that is not in $Q$.
Therefore, the only way that $g$ can satisfy \Cref{assumption} is by having $Q$ contain all  pushouts of all  short arrows in $g$. But then the decomposition of $g$ followed by the assumed decomposition of the horizontal arrow $h$ gives a decomposition of $f$ satisfying \Cref{assumption}.

This method can also be applied to a $k$-dimensional grid $P$, reducing the requirement for \Cref{assumption} on all arrows in $Q$ to only the vertical and horizontal arrows in $Q$. 

Given any morphism $f$ in $Q$ that is parallel to one of the axes of the grid, \emph{i.e.}, the source and target of $f$ share all coordinates but one, its unique factorization into short arrows satisfies  \Cref{assumption}.
    \end{remark}

\section{Examples and Combinatorics}\label{sec:examples}

In this section, we apply our theoretical results to describe all the weak equivalence sets on $[n]\times [1]$ for arbitrary $n$, and all the model structures on the  iterated parallel composition of $[2]$ with itself and the pentagon $N_5$.

\subsection{Weak equivalences on $[n] \times [1]$}

We  first  count  the weak equivalence sets on $[n] \times [1]$ for any $n \geq 1$. This lattice is of  relevance in equivariant homotopy theory as the subgroup lattice $\mathrm{Sub}(C_{p^nq})$ for $p$ and $q$ distinct primes.

\begin{proposition}
    Let $n\geq 1$.  There are $2^{2n+2}-2^{n+1}-2^nn$ weak equivalence sets on the lattice $[n]\times [1]$.
\end{proposition}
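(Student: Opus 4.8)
The plan is to apply \Cref{thm:allaboutw} and then count. Write $v_i$ for the short vertical arrow $(i,0)\to(i,1)$, and $h^0_i,h^1_i$ for the short horizontal arrows $(i,0)\to(i+1,0)$ and $(i,1)\to(i+1,1)$. By \Cref{thm:allaboutw}, the weak equivalence sets on $[n]\times[1]$ are exactly the wide decomposable subcategories $Q$ satisfying \Cref{assumption}, and by the grid discussion concluding \Cref{sec:centers} this needs only be checked on short arrows: it is automatic for each $h^0_i$ and $h^1_i$ (the former has no nontrivial pullback, the latter no nontrivial pushout), and for $v_i\in Q$ it asserts that $v_0,\dots,v_{i-1}$ all lie in $Q$ or $v_{i+1},\dots,v_n$ all lie in $Q$. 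Writing $V=\{\,i : v_i\in Q\,\}\subseteq\{0,\dots,n\}$, an elementary argument then shows $Q$ satisfies \Cref{assumption} if and only if $\{0,\dots,n\}\setminus V$ is an interval, the empty interval corresponding to $V=\{0,\dots,n\}$.

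Next I would use that a wide decomposable subcategory of $[n]\times[1]$ is determined by the short arrows it contains: since $[n]\times[1]$ is a product of two chains, a morphism $m$ lies in $Q$ precisely when every short arrow of the rectangle spanned by the endpoints of $m$ lies in $Q$. Thus $Q$ corresponds to a triple $(S^0,S^1,V)$ with $S^0\subseteq\{h^0_0,\dots,h^0_{n-1}\}$ and $S^1\subseteq\{h^1_0,\dots,h^1_{n-1}\}$, and the associated morphism set is automatically decomposable, so it is a wide decomposable subcategory exactly when it is closed under composition. Unwinding this closure condition over the four shapes of composable pairs $a\le b\le c$ in a two-row grid produces an explicit list of compatibility relations among $S^0$, $S^1$, and $V$.

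Third, for each admissible $V$ I would count the compatible pairs $(S^0,S^1)$. If $V=\{0,\dots,n\}$, the relations force $S^0=S^1$ with any common partition allowed, giving $2^n$ possibilities. If $\{0,\dots,n\}\setminus V$ is a nonempty interval $[a,b]$, then $S^0$ and $S^1$ must coincide on the columns $0,\dots,a-1$ and, separately, on the columns $b+1,\dots,n$ (a free common partition of each of these two blocks, as they carry all their verticals), the boundary edges $h^0_b$ and $h^1_{a-1}$ are forced out whenever they exist, and the remaining ``middle'' edges $h^0_{a-1},\dots,h^0_{b-1}$ and $h^1_a,\dots,h^1_b$ are unconstrained; adding the exponents gives $2^{n+b-a}$, a formula that also absorbs the degenerate cases $a=0$ and $b=n$ after the obvious modifications. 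Summing over admissible $V$, reindexing by $d=b-a$ (with $n+1-d$ pairs $(a,b)$ for each value of $d$), and using the identity $\sum_{d=0}^{n}(n+1-d)2^{d}=2^{n+2}-n-3$,
\[
2^{n}+\sum_{0\le a\le b\le n}2^{\,n+b-a}=2^{n}+2^{n}\sum_{d=0}^{n}(n+1-d)2^{d}=2^{2n+2}-2^{n+1}-2^{n}n.
\]
For $n=1$ this gives $10$, consistent with every wide decomposable subcategory of $[1]\times[1]$ being a weak equivalence set; for $n=2$ it gives $48$.

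The main obstacle is the second and third steps: identifying the precise set of compatibility relations between $S^0$, $S^1$, $V$ forced by closure under composition, verifying that these are also sufficient (so that each counted pair genuinely yields a wide decomposable subcategory), and bookkeeping carefully around the boundary columns $a-1$ and $b+1$ when $V$ is neither a prefix nor a suffix of $\{0,\dots,n\}$. The reduction to a condition on $V$ follows directly from \Cref{thm:allaboutw} and the grid remark, and the closing summation is routine.
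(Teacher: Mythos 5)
Your proposal is correct and follows essentially the same route as the paper's proof: reduce via \Cref{thm:allaboutw} (and the grid remark) to the statement that \Cref{assumption} constrains only the vertical short arrows, namely that the set of columns missing their vertical arrow must be an interval, then impose exactly the horizontal-arrow constraints the paper derives from composition and decomposability (paired edges $h^0_i,h^1_i$ between adjacent columns with verticals, the boundary edges $h^1_{a-1}$ and $h^0_b$ forced out, the remaining edges free) and count. The only difference is bookkeeping: you merge the paper's separate cases (no verticals, all, left-stacked, right-stacked, middle gap) into the single interval parameter $[a,b]$ with uniform count $2^{\,n+b-a}$ and a closed-form summation, and the compatibility relations you flag as still needing verification are precisely those worked out in the paper's proof, so no new idea is missing.
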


\begin{proof} We first describe the form  weak equivalence sets on $[n] \times [1]$ must take. Since weak equivalence sets are decomposable, we can fully describe a weak equivalence set $\W$ in terms of its short arrows.
    By \Cref{thm:allaboutw}, \Cref{assumption} is satisfied, and hence the vertical arrows of $\W$ must follow one of the following criteria:
    \begin{itemize}
    \item none of the the vertical arrows are present,
    \item all vertical arrows are present, or    
    \item if some are present and others are not then either:
    \begin{itemize}
    \item the ones present are all stacked to the left,
    \item the ones present are all stacked to the right, or
    \item there is exactly one gap of any size in the middle.
    \end{itemize}
    \end{itemize}

\Cref{assumption} does not impose any condition on the horizontal arrows of $\W$ because if a horizontal arrow of $\W$ is in the top row of $[n]\times [1]$ then there are no nontrivial pushouts of the arrow and if it is in the bottom row then there are no nontrivial pullbacks. Instead, requirements on the horizontal arrows stem from the layout of the vertical arrows, specifically whether or not two neighboring vertical arrows are in $\W$.

Since $\W$ is closed under composition, if $\W$ contains two neighboring vertical arrows and one of the horizontal arrows between them, it contains the diagonal arrow between them. The decomposability of $\W$ then forces the other horizontal arrow to be in $\W$. Thus, the two horizontal arrows between neighboring vertical arrows are either both in $\W$ or both not in $\W$.

Now suppose that a vertical arrow is in $\W$ but one of its neighboring vertical arrows is not. For example, suppose $(i,0)\to (i,1)$ is in $\W$ but $(i+1,0) \to (i+1,1)$ is not. Then the arrow $(i,1)\to (i+1,1)$ cannot be in $\W$, otherwise, composition and decomposability would imply $(i+1,0)\to (i+1,1)$ is in $\W$.  However, there are no restrictions on the other horizontal arrow $(i,0) \to (i+1,0)$, see the diagram below for an illustration.

    \[
     \begin{tikzpicture}[>=stealth, bend angle=20, baseline=(current bounding box.center), decoration=crosses]
    \node (00) at (0,0) {};
    \node (10) at (1.5,0) {};
    \node (20) at (3,0) {};
    \node (30) at (4.5,0){};
    \node (40) at (6,0){};
    \node (01) at (0,1.5) {};
    \node (11) at (1.5,1.5) {};
    \node (21) at (3,1.5) {};
    \node (31) at (4.5,1.5){};
    \node (41) at (6,1.5){};
    
    \fill (0,0) circle (2pt);
    \fill (1.5,0) circle (2pt);
    \fill (3,0) circle (2pt);
    \fill (4.5,0) circle (2pt);
    \fill (6,0) circle (2pt);
    \fill (0,1.5) circle (2pt);
    \fill (1.5,1.5) circle (2pt);
    \fill (3,1.5) circle (2pt);
    \fill (6,1.5) circle (2pt);
    \fill (4.5,1.5) circle (2pt);

    \draw[->,dashed] (11)--(21);
    \draw[->,dashed] (20)--(30); 
    \fill[white] (2.25,1.5) circle (3pt);
    \fill[white] (3.75,0) circle (3pt);
    
    \node[red] at (2.25,1.5){$\times$};
    \node[red] at (3.75,0){$\times$};
    \node at (3,0.75){$\notin$};

    \draw[->] (00)--(01);
    \draw[->] (10)--(11);
    \draw[->] (30)--(31);
    \draw[->] (40)--(41);
    \draw[dotted,->] (20) to (21);

    \draw[lightgray] (-0.5,-0.5)--(6.5,-0.5)--(6.5,2)--(-0.5,2)--(-0.5,-0.5);
\end{tikzpicture}
    \]
The case when $(i,0)\to (i,1)$ is in $\W$ but $(i-1,0)\to (i-1,1)$ is not is similar.
There are no restrictions on horizontal arrows between neighboring vertical arrows that are not in $\W$. It is possible for none, one, or both in $\W$. Below is a picture of a weak equivalence set in  $[8] \times[1]$ with the composition arrow omitted which shows all the possible combinations of horizontal and vertical arrows. 
 \[
     \begin{tikzpicture}[>=stealth, bend angle=20, baseline=(current bounding box.center)]
    \fill (0,0) circle (2pt);
    \fill (1.5,0) circle (2pt);
    \fill (3,0) circle (2pt);
    \fill (4.5,0) circle (2pt);
    \fill (6,0) circle (2pt);
    \fill (7.5,0) circle (2pt);
    \fill (9,0) circle (2pt);
    \fill (10.5,0) circle (2pt);
    \fill (12,0) circle (2pt);
    \fill (0,1.5) circle (2pt);
    \fill (1.5,1.5) circle (2pt);
    \fill (3,1.5) circle (2pt);
    \fill (4.5,1.5) circle (2pt);
    \fill (6,1.5) circle (2pt);
    \fill (7.5,1.5) circle (2pt);
    \fill (9,1.5) circle (2pt);
    \fill (10.5,1.5) circle (2pt);
    \fill (12,1.5) circle (2pt);

    \node (00) at (0,0){};
    \node (10) at (1.5,0){};
    \node (20) at (3,0){};
    \node (30) at (4.5,0){};
    \node (40) at (6,0){};
    \node (50) at (7.5,0){};
    \node (60) at (9,0){};
    \node (70) at (10.5,0){};
    \node (80) at (12,0){};
    \node (01) at (0,1.5){};
    \node (11) at (1.5,1.5){};
    \node (21) at (3,1.5){};
    \node (31) at (4.5,1.5){};
    \node (41) at (6,1.5){};
    \node (51) at (7.5,1.5){};
    \node (61) at (9,1.5){};
    \node (71) at (10.5,1.5){};
    \node (81) at (12,1.5){};
    
    \draw[->] (00)--(01);
    \draw[->] (10)--(11);
    \draw[->] (20)--(21);
    \draw[->] (00)--(10);
    \draw[->] (01)--(11);
    \draw[->] (31)--(41);
    \draw[->] (40)--(50);
    \draw[->] (60)--(70);
    \draw[->] (61)--(71);
    \draw[->] (71)--(81);
    \draw[->] (80)--(81);
    \draw[->, white, bend right] (00) to (20);
    \draw[->, white, bend left] (01) to (21);

    \draw[lightgray] (-0.5,-0.5)--(12.5,-0.5)--(12.5,2)--(-0.5,2)--(-0.5,-0.5);
 \end{tikzpicture}
 \]

    We can now proceed to count weak equivalence sets based on the three cases for the vertical arrows imposed by \Cref{assumption}. In the case where there are no vertical arrows, any combination of horizontal arrows creates a weak equivalence set. This gives $2^{2n}$ weak equivalence sets. In the case where all vertical arrows are there, there are $2^n$ weak equivalence sets.
    
    Now suppose the vertical arrows present are all stacked to the left. More precisely, there exists $0\leq \ell < n$ such that for all $i\leq \ell$, $(i,0)\to (i,1)$ is in $\W$, and for $i>\ell$, $(i,0)\to (i,1)$ is in not $\W$. The considerations above imply that in this case there are $2^{2n-\ell-1}$ combinations of horizontal arrows that lead to weak equivalence sets. Taking the sum over $\ell$, we get a total of
    \[\sum_{\ell=0}^{n-1} 2^{2n-\ell-1}=2^{2n}-2^n\] weak equivalences sets.
    The case when the vertical arrows are stacked to the right is analogous and gives the same count. 
    
    Finally, we consider the case in which there is a gap in the middle of the vertical arrows. Thus, suppose there exist $\ell, k\geq 0$ with $n-k\geq \ell +2$ such that a vertical arrow $(i,0)\to (i,1)$ is in $\W$ if and only if $i\leq \ell$ or $i \geq n-k$. This means that $\W$ contains the $\ell+1$ leftmost vertical arrows and the $k+1$ rightmost vertical arrows, with a gap in between. The considerations above imply that there are $2^{2n-\ell-k-2}$ configurations of horizontal arrows that give a valid $\W$. Taking the sum over $\ell,k$ gives a total of
    \[\sum_{k=0}^{n-2}\sum_{\ell=0}^{n-k-2}2^{2n-\ell-k-2}=2^{2n}-2^{n+1}-2^n(n-1). \]

Adding up all different cases, it follows that the total count of weak equivalence sets on $[n]\times [1]$ is $2^{2n+2}-2^{n+1}-2^nn$, as claimed. \end{proof}

\begin{remark}
    At the time of writing, counting transfer systems on $[n]\times [1]$ has been untractable, thus counting model structures on $[n]\times[1]$ is untractable as well. We expect this to be a topic of future research. 
\end{remark}

\subsection{Model structures on $[2]^{\ast n}$}\label{subsec:diamond}

We  now  provide a complete characterization of the model structures on $[2]^{\ast n}$. This lattice is of  relevance in equivariant homotopy theory as $[2]^{\ast(p+1)}$  equals the subgroup lattice $\mathrm{Sub}(C_p \times C_p)$ of a rank two elementary abelian group  \cite[Lemma 5.1]{Baoetal}. 

Recall from \Cref{ex:2iterated} that $[2]^{\ast n}$ consists of a maximal element $\top$, a minimal element $\bot$ and $n$ incomparable elements labeled $1$ through $n$. We first use \Cref{assumption} to determine all weak equivalence sets on $[2]^{\ast n}$. Then given a weak equivalence set $\W$ we use \Cref{thm:AFW-is-lattice} to count all possible model structure with weak equivalences $\W$.

Before proceeding, we need to know the pushouts and pullbacks of $i \to \top$ and $\bot \to i$ for any $i$. First, $i \to \top$ has no nontrivial pushouts, and if $j \neq i$ then $\bot \to j$ is a pullback of $i \to \top$. Similarly, $\bot \to i$ has no nontrivial pullbacks, and if $i\neq j$ then $j \to \top$ is a pushout of $\bot \to j$. See figure below, on the left, the dashed arrows are  the pullbacks of the solid arrow $2\rightarrow\top$. On the right, the dashed arrows are  the pushouts of the solid arrow $\bot \rightarrow 3$.
     \[
\begin{tikzpicture}[>=stealth, bend angle=20, baseline=(current bounding box.center)]
    \node (botl) at (0,0){$\bot$};
    \node (topl) at (0,3){$\top$};
    \node (1l) at (-1.5,1.5){$1$};
    \node (2l) at (-0.5,1.5){$2$};
    \node (3l) at (0.5,1.5){$3$};
    \node (4l) at (1.5,1.5){$4$};

    \node (botr) at (5,0){$\bot$};
    \node (topr) at (5,3){$\top$};
    \node (1r) at (3.5,1.5){$1$};
    \node (2r) at (4.5,1.5){$2$};
    \node (3r) at (5.5,1.5){$3$};
    \node (4r) at (6.5,1.5){$4$};

    \draw[->, dashed] (botl)--(1l);
    \draw[->, dashed] (botl)--(3l);
    \draw[->, dashed] (botl)--(4l);
    \draw[->] (2l)--(topl);

    \draw[->] (botr)--(3r);
    \draw[->, dashed] (1r)--(topr);
    \draw[->, dashed] (2r)--(topr);
    \draw[->, dashed] (4r)--(topr);
\end{tikzpicture}
\]

\begin{proposition}
    Let $n \geq 1$. There are $3^n+1$ weak equivalence sets on the lattice $[2]^{* n}$.
\end{proposition}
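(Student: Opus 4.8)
The plan is to classify all wide decomposable subcategories of $[2]^{*n}$ by hand, observe there are exactly $3^n+1$ of them, and then use \Cref{thm:allaboutw} to see that each one is a weak equivalence set; since every weak equivalence set is in particular a wide decomposable subcategory, this forces the count to be $3^n+1$.

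First I would classify the wide decomposable subcategories. Given such a subcategory $\W$, record $A=\{i\mid(\bot\to i)\in\W\}$ and $B=\{i\mid(i\to\top)\in\W\}$. The only composite of two non-identity morphisms in $[2]^{*n}$ is $(i\to\top)\circ(\bot\to i)=(\bot\to\top)$, so closure under composition forces $(\bot\to\top)\in\W$ as soon as $A\cap B\neq\emptyset$; decomposability applied to this composite for each $i$ then puts every $\bot\to i$ and every $i\to\top$ in $\W$, so $A=B=\{1,\dots,n\}$ and $\W$ is the complete subcategory. Conversely, if $A\cap B=\emptyset$ then $\W$ has no composable pair of non-identity morphisms, so neither closure under composition nor decomposability imposes any constraint, and every disjoint pair $(A,B)$ of subsets of $\{1,\dots,n\}$ yields a distinct wide decomposable subcategory. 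Such a disjoint pair is the same as a choice, for each of the $n$ incomparable elements, of $A$, of $B$, or of neither, so there are $3^n$ of them, and together with the complete subcategory this gives $3^n+1$ wide decomposable subcategories in all.

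Next I would verify that each of these subcategories is a weak equivalence set. The complete subcategory is at once a transfer system and a cotransfer system, hence a weak equivalence set by \Cref{prop:cotransferW}. For a subcategory $\W$ arising from a disjoint pair $(A,B)$, every non-identity morphism is a short arrow, either $\bot\to i$ with $i\in A$ or $i\to\top$ with $i\in B$, so it is enough to check \Cref{assumption} one short arrow at a time. By the pushout and pullback computations preceding the statement, $\bot\to i$ has no nontrivial pullbacks and $i\to\top$ has no nontrivial pushouts; hence, taking the length-one factorization of a short arrow and choosing $k=0$ when it is of the form $\bot\to i$ and $k=1$ when it is of the form $i\to\top$ verifies \Cref{assumption} for every morphism of $\W$. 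By \Cref{thm:allaboutw}, $\W$ is a weak equivalence set, and combining the two directions gives exactly $3^n+1$ of them.

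The only step that needs care is the classification, and specifically the observation that it is decomposability, not merely closure under composition, that collapses the case $A\cap B\neq\emptyset$ down to the complete subcategory; once that bookkeeping is done the rest follows immediately, because $[2]^{*n}$ has no nontrivial composites of short arrows beyond those running from $\bot$ to $\top$. I do not anticipate a genuine obstacle.
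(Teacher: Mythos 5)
Your proof is correct and follows essentially the same route as the paper: both arguments show that any non-complete wide decomposable subcategory contains at most one of $\bot\to i$ and $i\to\top$ for each $i$, verify \Cref{assumption} using the absence of nontrivial pullbacks of $\bot\to i$ and pushouts of $i\to\top$, and count via the three choices per incomparable element plus the complete subcategory. Your explicit bookkeeping with disjoint pairs $(A,B)$ and the separate treatment of the complete case via \Cref{prop:cotransferW} are only cosmetic differences from the paper's argument.
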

\begin{proof}

 First we show that every wide decomposable subcategory $\W$ of $[2]^{* n}$ is a weak equivalence set. Given $i=1,\dots,n$, if $\bot \to i$ and $i\to \top$ are both in $\W$, then the composite $\bot \to \top$ is as well, and by decomposability, $\W$ is complete. Therefore, if $\W$ is not complete, it can contain at most one  of $\bot \to i$ and $i\to \top$ for each $i$. 
 
 Further, for all $i$ the arrow $i \to \top$ has no nontrivial pushouts, and the arrow $\bot \to i$ has no nontrivial pullbacks. As a consequence,  \Cref{assumption} is automatically satisfied, and thus $\W$ is a weak equivalence set. 
 
 When counting the non-complete weak equivalence sets, we  have three independent choices for each $i$: $\W$ contains $i \to \top$, $\bot \to i$, or it contains neither. Thus, including the complete subcategory, there are $3^n+1$  options for $\W$.
 \end{proof}

   We can now count the model structures on $[2]^{* n}$. 
      \begin{proposition}
    Let $n \geq 1$. There are $3^n+2^{n+1}+3n$ model structures on $[2]^{* n}$.
\end{proposition}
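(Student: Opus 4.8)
The plan is to invoke \Cref{thm:AFW-is-lattice}: for each weak equivalence set $\W$ the model structures with weak equivalences $\W$ correspond bijectively to the transfer systems in the interval $[\AF_{min},\AF_{max}]$ of $\Tr([2]^{\ast n})$, and $\AF_{max}=\Tmax$ by \Cref{prop:AFmax=Tmax}. So I would run through the $3^n+1$ weak equivalence sets found above, compute $\Tmax$ and $\AF_{min}$ for each, count the corresponding interval, and sum.

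First I would record the combinatorics of $\Tr([2]^{\ast n})$. Encode a transfer system by a triple $(U,L,\varepsilon)$, where $U\subseteq\{1,\dots,n\}$ lists the arrows $i\to\top$ it contains, $L$ lists the arrows $\bot\to i$, and $\varepsilon\in\{0,1\}$ records whether $\bot\to\top$ is present. Using the pullbacks described above, closure under pullbacks together with transitivity is equivalent to the conditions $\varepsilon=1\Rightarrow L=\{1,\dots,n\}$, $i\in U\Rightarrow\{1,\dots,n\}\setminus\{i\}\subseteq L$, and $i\in U\cap L\Rightarrow\varepsilon=1$. Counting triples that satisfy these gives $\lvert\Tr([2]^{\ast n})\rvert=2^{n+1}+n$: the case $\varepsilon=1$ forces $L=\{1,\dots,n\}$ and leaves $U$ arbitrary ($2^n$ triples), while $\varepsilon=0$ allows $2^n$ triples with $U=\varnothing$ and exactly $n$ more with $\lvert U\rvert=1$, and none with $\lvert U\rvert\ge2$. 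Dually $\lvert\coTr([2]^{\ast n})\rvert=2^{n+1}+n$. The complete subcategory of $[2]^{\ast n}$ is simultaneously a transfer system and a cotransfer system, hence a weak equivalence set with $\AF_{min}=T_{triv}$ (by \Cref{prop:cotransferW}) and $\AF_{max}=\Tmax$ the complete transfer system; so it contributes all of $\Tr([2]^{\ast n})$, that is $2^{n+1}+n$ model structures.

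Next I would treat the $3^n$ non-complete weak equivalence sets. Each is of the form $\W_c$ for a function $c\colon\{1,\dots,n\}\to\{U,L,N\}$, where $c(i)=U$ means $i\to\top\in\W$, $c(i)=L$ means $\bot\to i\in\W$, and $c(i)=N$ means neither; since $\W_c$ is non-complete it contains no long arrow, so every transfer system inside it has $\varepsilon=0$. Determining which triples $(U',L',0)$ lie in $\W_c$ (equivalently, applying \Cref{prop:Tmax-closure}) one finds $\Tmax=(\varnothing,c^{-1}(L),0)$, with the sole exception that if $c$ takes the value $U$ exactly once, say at $i_0$, and the value $L$ on all other indices, then $\Tmax=(\{i_0\},c^{-1}(L),0)$; dually, $\Cmax=(c^{-1}(U),\varnothing,0)$ except when $c$ takes the value $L$ exactly once with all other indices of type $U$. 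Then \Cref{cor:dual-AFmin-ACmin} and the lift formula of \Cref{prop:liftformula} give $\AF_{min}=\Cmax^\boxslash\cap\W_c$, and carrying out this computation shows $\AF_{min}=\Tmax$ whenever $c$ is of neither exceptional type, so the interval $[\AF_{min},\AF_{max}]$ is a single point and $\W_c$ contributes exactly one model structure. For an exceptional $c$ one computes that $[\AF_{min},\AF_{max}]$ has two elements if $n\neq2$ and three elements if $n=2$, precisely the case where the two exceptional families of type functions coincide. Bookkeeping then gives the non-complete contribution: for $n\neq2$ there are $3^n-2n$ non-exceptional and $2n$ exceptional type functions, for a total of $(3^n-2n)\cdot1+2n\cdot2=3^n+2n$; for $n=2$ there are $3^2-2$ non-exceptional and $2$ exceptional ones, for a total of $7+2\cdot3=13=3^2+2\cdot2$.

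Adding the complete and non-complete contributions yields $(2^{n+1}+n)+(3^n+2n)=3^n+2^{n+1}+3n$, as claimed. The step I expect to be the main obstacle is the case analysis in the previous paragraph: one must identify precisely which type functions make $\Tmax$ (resp.\ $\Cmax$) strictly larger than $(\varnothing,c^{-1}(L),0)$ (resp.\ $(c^{-1}(U),\varnothing,0)$), evaluate $\Cmax^\boxslash\cap\W_c$ carefully in those cases using \Cref{prop:liftformula}, and keep track of the low-dimensional coincidences at $n=1,2$ so that the final count comes out uniform.
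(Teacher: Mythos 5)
Your proposal is correct and follows essentially the same route as the paper: both count via \Cref{thm:AFW-is-lattice} by summing over weak equivalence sets, with the complete one contributing the $2^{n+1}+n$ transfer systems, the generic non-complete ones contributing a single model structure since $\AF_{min}=\AF_{max}$, and the $2n$ ``one arrow of one type, all others of the other type'' sets (the paper's Cases 1 and 2) contributing two each, with the $n=2$ coincidence handled separately. The only differences are cosmetic: you re-derive $\lvert\Tr([2]^{\ast n})\rvert=2^{n+1}+n$ instead of citing it, and you treat $n=1,2$ inside the same uniform parametrization rather than by citation and direct enumeration as the paper does.
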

    \begin{proof}
    When $n=1$ the results of \cite{ModelStructuresOnFiniteTotalOrders}   give 10 model structures, which satisfies the formula. When $n=2$, there are 10 weak equivalence sets on $[2]^{* 2}$. The complete set gives rise to 10 model structures, since by \Cref{rem:Wcotransfer} each transfer system in $[2]^{*2}$ gives rise to a unique model structure. The two other weak equivalence sets that are cotransfer systems each contain three transfer systems. Lastly, there is exactly one model structure on  each remaining weak equivalence set. This gives a total of 23 model structures, which again satisfies the formula.

Now let $n \ge 3$, and let $\W$ be a weak equivalence set. If $\W$ is complete, then there is model structure for each transfer system. As computed in \cite[Proposition 5.2]{Baoetal}, the number of transfer systems in $[2]^{*n}$ is $2^{n+1}+n$. 
    
    Now assume that $\W$ is not complete. By \Cref{thm:AFW-is-lattice}, we need to determine $\AF_{max}$, $\AF_{min}$ and all transfer systems between them. We consider three cases.

    \emph{Case 1.} Suppose  $\W$ consists of $i \to \top$ for some $1 \leq i \leq n$, and $\bot \to j$  and all $j$ such that $i \neq j$. %
    Since $\W$ is closed under pullbacks, it follows that $\AF_{max}=\W$. To determine $\AF_{min}$, we start with $\AC_{max}$, which cannot contain any arrows of the form $\bot \to j$, because the assumption that $n\geq 3$ implies that there is a pushout of $\bot \to j$ of the form $k\to\top$ that is not in $\W$.  Thus $\AC_{max}$ consists only of $i \to \top$. A direct calculation using \Cref{cor:dual-AFmin-ACmin} shows that $\AF_{min}$ consists of the arrows of the form $\bot \to j$ that are in $\W$. 
    In particular, $\AF_{min}$ and $\AF_{max}$ differ exactly by one arrow, so there are exactly two model structures with weak equivalences $\W$.

    \emph{Case 2.}  Suppose  $\W$ consists of $\bot \to i$ for some $1 \leq i \leq n$, and $j \to \top$  and all $j$ such that $i \neq j$.
    This case is dual to Case 1, and an analogous argument shows there are exactly two model structures with weak equivalences $\W$.

    \emph{Case 3.} Lastly, we  consider $\W$  that  is not complete and that does not look like Case 1 or Case 2. This implies that either there exists $i$ such that $\bot \to i$ and $i \to \top$ are \emph{not} in $\W$ as on the left below, or for each $i$ either $\bot \to i$ or $i \to \top$ is in $\W$ and at least two arrows go to $\top$ and at least two arrows come from $\bot$ as on the right below.  
    \[
\begin{tikzpicture}[>=stealth, bend angle=20, baseline=(current bounding box.center)]
    \node (botl) at (0,0){$\bot$};
    \node (topl) at (0,3){$\top$};
    \node (1l) at (-1.5,1.5){$1$};
    \node (2l) at (-0.5,1.5){$2$};
    \node (3l) at (0.5,1.5){$3$};
    \node (4l) at (1.5,1.5){$4$};

    \node (botr) at (5,0){$\bot$};
    \node (topr) at (5,3){$\top$};
    \node (1r) at (3.5,1.5){$1$};
    \node (2r) at (4.5,1.5){$2$};
    \node (3r) at (5.5,1.5){$3$};
    \node (4r) at (6.5,1.5){$4$};

    \draw[->] (botl)--(3l);
    \draw[->] (1l)--(topl);
    \draw[->] (4l)--(topl);

    \draw[->] (botr)--(1r);
    \draw[->] (botr)--(3r);
    \draw[->] (2r)--(topr);
    \draw[->] (4r)--(topr);
\end{tikzpicture}
\]
    
    In either case, $\AF_{max}$ cannot contain any of the arrows of the form $j\to \top$, otherwise it would not be closed under pullbacks. Thus $\AF_{max}$ consists precisely of all the arrows in $\W$ of the form $\bot \to j$. A dual argument shows that $\AC_{max}$ consists of the arrows in $\W$ of the form $j\to \top$. A direct calculation  shows that $\AF_{min}=\AF_{max}$, giving  exactly one model structure with weak equivalences $\W$.

Finally, we count the number of weak equivalences in each case. There are exactly $n$ weak equivalence sets in each of Case 1 and Case 2, given by the choice of $i$,  and there are $3^n-2n$ in Case 3. Thus, the total number of model structures on $[2]^{*n}$ is
\[(2^{n+1}+n)+2(2n)+(3^n-2n)=3^n+2^{n+1}+3n.\]\end{proof}

\subsection{Model structure on the non-modular lattice, the pentagon $N_5$}

In this final section we explain how to determine all model structures on the pentagon $N_5$, shown below.

\[
\begin{tikzpicture}[>=stealth, bend angle=20, baseline=(current bounding box.center)]
    \node (0) at (0,0) {$0$};
    \node (a) at (-1.2,0.4) {$a$};
    \node (b) at (0.7,1) {$b$};
    \node (c) at (-1.2,1.5) {$c$};
    \node (1) at (0,2) {1};

    \draw[->] (0)--(a);
    \draw[->] (a)--(c);
    \draw[->] (c)--(1);
    \draw[->] (0)--(b);
    \draw[->] (b)--(1);
\end{tikzpicture}
\]

In the context of transfer systems and model structures, $N_5$ is an interesting example because it is not modular\footnote{
A lattice $P$ is \emph{modular} if 
for every $x,y,z \in P$, if $x \leq z$ then
$x \vee (y \wedge z) = (x \vee y) \wedge z$.
Another characterization is that a lattice is modular if and only if it does not contain $N_5$ as a sublattice.}. In non-modular lattices, and in particular in $N_5$, there are short arrows with pullbacks that are not short, and similar for pushouts.

\begin{proposition} 
On the pentagon $N_5$, every wide decomposable subcategory is a weak equivalence set, and there are 22 weak equivalence sets. Furthermore, there are 70 model structures. 
\end{proposition}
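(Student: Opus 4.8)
The plan is to dispatch the three assertions using the characterizations of Sections~\ref{sec:FromTStoMS} and~\ref{sec:centers} --- \Cref{thm:allaboutw} for the first assertion (whence the count in the second) and \Cref{thm:AFW-is-lattice} for the third --- together with an explicit analysis of $N_5$. The first step is to record the combinatorics of $N_5$. Its short arrows are $0\to a$, $a\to c$, $c\to 1$, $0\to b$, $b\to 1$, and its only non-short morphisms are $0\to c=(a\to c)\circ(0\to a)$, $a\to 1=(c\to 1)\circ(a\to c)$, and $0\to 1$, where $0\to 1$ factors both through the three-term chain and through $0\to b\to 1$. A direct computation of meets and joins gives the nontrivial pullbacks and pushouts of every arrow; the salient facts are that $0\to a$, $a\to c$ and $0\to b$ have no nontrivial pullbacks, that $a\to c$, $c\to 1$ and $b\to 1$ have no nontrivial pushouts, and --- reflecting the non-modularity --- that $b\to 1$ has the non-short pullback $0\to c$ while $0\to b$ has the non-short pushout $a\to 1$.

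For the first two assertions I would argue as follows. By \Cref{thm:allaboutw} it suffices to check that every wide decomposable subcategory $Q$ of $N_5$ satisfies \Cref{assumption}. Each short arrow of $N_5$ has no nontrivial pullbacks or no nontrivial pushouts, so \Cref{assumption} holds automatically for short arrows; for $0\to c$ one uses the factorization $(a\to c)\circ(0\to a)$ with pivot $k=0$, and for $a\to 1$ the factorization $(c\to 1)\circ(a\to c)$ with pivot $k=2$, each legitimate for any $Q$ containing the arrow because $Q$ is decomposable; and if $0\to 1\in Q$ then decomposability along both of its factorizations forces $Q=N_5$, for which \Cref{assumption} is vacuous. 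To count weak equivalence sets, observe that a wide decomposable subcategory equals the subcategory generated by its set $\Sigma$ of short arrows, and that the subcategory generated by a set $\Sigma$ of short arrows is decomposable precisely when it does not contain $0\to 1$ unless $\Sigma$ is all five short arrows; this yields $1+(2^{3}-1)(2^{2}-1)=22$.

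For the count of model structures, \Cref{thm:AFW-is-lattice} reduces the problem, for each of the $22$ weak equivalence sets $\W$, to computing the size of the interval $[\AF_{min},\AF_{max}]$ in $\Tr(N_5)$, where $\AF_{max}=\Tmax$ is produced by \Cref{prop:Tmax-closure} and $\AF_{min}=\Cmax^{\boxslash}\cap\W$ by \Cref{prop:Kmax-closure}, \Cref{prop:liftformula} and \Cref{cor:dual-AFmin-ACmin}. I would carry this out case by case, organizing the $\W$ by their short-arrow sets and using two labor-saving devices: the order-reversing anti-automorphism of $N_5$ that swaps $0\leftrightarrow 1$ and $a\leftrightarrow c$, which interchanges transfer and cotransfer systems and hence preserves the number of model structures with a given weak equivalence set, so that only one $\W$ per orbit need be analyzed; and \Cref{rem:Wcotransfer}, which for any $\W$ that is itself a transfer system or a cotransfer system identifies the answer with a count of (co)transfer systems contained in $\W$. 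The complete subcategory $N_5$ is a cotransfer system, so it supports exactly as many model structures as there are transfer systems on $N_5$; closing subsets of the eight relations under pullbacks and composition subject to the constraints recorded above gives $26$ such transfer systems. Summing the contributions of the remaining $21$ weak equivalence sets gives $44$, hence $70$ model structures in total.

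I expect the bookkeeping of this last step to be the main obstacle. The non-modularity of $N_5$ invalidates the grid-lattice shortcuts available for $[m]\times[n]$, several weak equivalence sets are neither transfer nor cotransfer systems so that the full $\AF_{min}/\AF_{max}$ computation is unavoidable, and the enumeration of transfer systems on $N_5$ must be done with care because of the non-short pullbacks and pushouts. Even after applying the anti-automorphism and \Cref{rem:Wcotransfer}, a handful of cases remain delicate --- for example, the subcategory consisting of the identities together with $a\to c$, $c\to 1$, $0\to b$ and $a\to 1$ is simultaneously a transfer system and a cotransfer system, and one must verify by hand that it contains precisely $7$ transfer systems and therefore carries $7$ model structures.
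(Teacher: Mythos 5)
Your proposal is correct and follows essentially the same route as the paper: the pushout/pullback table for $N_5$ combined with \Cref{assumption} and \Cref{thm:allaboutw} for the first two assertions, and the interval $[\AF_{min},\AF_{max}]$ machinery of \Cref{thm:AFW-is-lattice}, \Cref{rem:Wcotransfer} and \Cref{cor:dual-AFmin-ACmin} applied case by case for the count of 70, with the bulk of the enumeration left as bookkeeping just as in the paper. Your added touches --- the explicit count $1+(2^{3}-1)(2^{2}-1)=22$ of wide decomposable subcategories and the use of the self-duality of $N_5$ to halve the casework --- are correct refinements of the paper's ``direct observation,'' and your sample figures (26 transfer systems on $N_5$, 7 model structures on the weak equivalence set generated by $a\to c$, $c\to 1$, $0\to b$, and a remaining contribution of 44) all check out.
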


\begin{proof}
In order to prove our claim, we first work out the possible weak equivalence sets. Then, we apply the methods given at the end of Section \ref{sec:FromTStoMS} to calculate $\AF(\W)$ for each weak equivalence set $\W$. This is a straightforward yet laborious task, therefore, instead of presenting it in detail, we give the most important steps as well as some sample calculations.   

We start by showing that a wide decomposable subcategory $\W$ is a weak equivalence set by showing that $\W$ satisfies \Cref{assumption}. For this, we look at the table of the pushouts and pullbacks of all morphisms in $N_5$, shown below, where \ding{63} denotes that there are no nontrivial pushouts or pullbacks. 
The morphisms in the first block of the table are the short arrows, and the morphisms in the second block have a nontrivial factorization.
 We do not include the map $0 \rightarrow 1$ in the table as any decomposable subcategory containing this map is the entire lattice. The pullbacks of $0 \rightarrow 1$ are all arrows of the form $0 \rightarrow x$, and similarly all pushouts of $0 \rightarrow 1$ are all arrows of the form $x \rightarrow 1$.

\begin{center}
\begin{tabular}{|l |l |l| }
\hline
 & pushouts & pullbacks \\
 \hline
 $0 \rightarrow a$ & $b \rightarrow 1$ & \ding{63}\\
 $a \rightarrow c$ & \ding{63} & \ding{63} \\
 $c \rightarrow 1$ & \ding{63} & $0 \rightarrow b$ \\
 $0 \rightarrow b$ & $a \rightarrow 1, c \rightarrow 1$& \ding{63} \\
 $b \rightarrow 1$ & \ding{63} & $0 \rightarrow a, 0 \rightarrow c$ \\
 \hline
 $a \rightarrow 1 $ & $c \rightarrow 1$ & $0 \rightarrow b, a \rightarrow c$ \\
 $0 \rightarrow c$ & $a \rightarrow c, b \rightarrow 1$ & $0 \rightarrow a$ \\
 \hline
\end{tabular}
\end{center}

 Given any wide decomposable subcategory $\W$, we observe the following.
 \begin{itemize}
 \item 
 No short arrow has both nontrivial pushouts and nontrivial pullbacks. Therefore, every short arrow has either all its pullbacks or all its pushouts in $\W$.
 \item Since $\W$ is decomposable, if $a \rightarrow 1$ is in $\W$ then both $a \rightarrow c$ and $c \rightarrow 1$ are in $ \W$. Therefore, $a \rightarrow 1$ satisfies \Cref{assumption}.
 \item Similarly, if $0 \rightarrow c$ is in $ \W$ then both $0 \rightarrow a$ and $a \rightarrow c$ are in $ \W$. Thus, $0 \rightarrow c$ satisfies \Cref{assumption}.
 \end{itemize}

It follows that every wide decomposable subcategory is a weak equivalence set. Direct observation then shows that there are 22 such decomposable subcategories. We can also use the table to work out that there are 26 transfer systems and therefore also 26 cotransfer systems on $N_5$.

We can now directly determine the model structures on each weak equivalence set using the tools of \Cref{sec:FromTStoMS}. In particular, eight of the weak equivalence sets are transfer systems and not cotransfer systems, eight are cotransfer systems and not transfer systems,  five are both transfer systems and cotransfer systems, and one is neither a transfer system nor a cotransfer system. For the 21 weak equivalence sets that are transfer/cotransfer systems we can use \Cref{prop:cotransferW}  and \Cref{rem:Wcotransfer} to determine all model structures.  
Alternatively, recall that $AF_{max}$ is the maximum transfer system in the weak equivalence set. We can determine $\AF_{min}$   and therefore the interval between $\AF_{min}$ and $\AF_{max}$ using \Cref{cor:dual-AFmin-ACmin} and the subsequent steps:
\begin{enumerate}
\item find $\AC_{max}$,
\item determine the corresponding transfer system $\AC_{max}^\boxslash$, and
\item calculate $\AF_{min} = \AC_{max}^\boxslash \cap \W$.
\end{enumerate}

If we follow this procedure for all weak equivalence sets, we see that there are precisely 70 different model structures on the pentagon $N_5$. \end{proof}

The following two examples illustrate this process.

\begin{example}
Take $\W$ to be the weak equivalence set shown below. 
\[
\begin{tikzpicture}[>=stealth, bend angle=20, baseline=(current bounding box.center)]
    \node (0) at (0,0) {$0$};
    \node (a) at (-1.2,0.4) {$a$};
    \node (b) at (0.7,1) {$b$};
    \node (c) at (-1.2,1.5) {$c$};
    \node (1) at (0,2) {1};

    \draw[->] (0)--(a);
    \draw[->] (a)--(c);
    \draw[->] (0)--(c);
    \draw[->] (0)--(b);
\end{tikzpicture}
\]
The largest cotransfer system inside this $\W$ is
$\AC_{max}= \{ a \rightarrow c \}$.
Using \Cref{prop:liftformula}, we can calculate
$$\AC_{max}^\boxslash=\{ 0 \rightarrow a, 0 \rightarrow b, 0 \rightarrow c, 0 \rightarrow 1, b \rightarrow 1, c \rightarrow 1 \},$$ and thus 
$\AF_{min} = \AC_{max}^\boxslash \cap W = \{ 0 \rightarrow a, 0 \rightarrow b, 0 \rightarrow c \}$.

Since $\W$ is a transfer system, $\AF_{max} = \W$. Moreover, there is no transfer system between $\AF_{min}$ and $\AF_{max}$. Therefore,
\[\AF(\W)= \{ \AF_{min}, \W \},\]
\emph{i.e.}, there are precisely two model structures with this weak equivalence set. 

\end{example} 

\begin{example}
The only weak equivalence set $\W$ which is neither a transfer system nor a cotransfer system is shown below.
\[
\begin{tikzpicture}[>=stealth, bend angle=20, baseline=(current bounding box.center)]
    \node (0) at (0,0) {$0$};
    \node (a) at (-1.2,0.4) {$a$};
    \node (b) at (0.7,1) {$b$};
    \node (c) at (-1.2,1.5) {$c$};
    \node (1) at (0,2) {1};

    \draw[->] (0)--(a);
    \draw[->] (c)--(1);
\end{tikzpicture}
\]

The maximum transfer system contained in $\W$ is $\Tmax=\AF_{max}=\{0\to a\}$. The maximum cotransfer system is 
 $\Cmax=\AC_{max} = \{ c \rightarrow 1 \}$. Then 
 \[\AC_{max}^\boxslash = \{0 \rightarrow a, 0 \rightarrow b, 0 \rightarrow c, 0 \rightarrow 1, a \rightarrow c, b \rightarrow 1\},\]
 and hence $\AF_{min} =  \{ 0 \rightarrow a \}=\AF_{max} $. Therefore, there is only one model structure with this weak equivalence set.

\end{example}

\bibliographystyle{alpha}
\bibliography{bibby.bib}

@book {BalchinBook,
    AUTHOR = {Balchin, Scott},
     TITLE = {A handbook of model categories},
    SERIES = {Algebra and Applications},
    VOLUME = {27},
 PUBLISHER = {Springer, Cham},
      YEAR = {2021},
     PAGES = {xv+326},
      ISBN = {978-3-030-75034-3; 978-3-030-75035-0},
   MRCLASS = {55U35 (18N40 18N50 18N55 18N60)},
  MRNUMBER = {4385504},
MRREVIEWER = {Philippe\ Gaucher},
       DOI = {10.1007/978-3-030-75035-0},
       URL = {https://doi.org/10.1007/978-3-030-75035-0},
}

@article{NinftyOperads,
    AUTHOR = {Balchin, Scott and Barnes, David and Roitzheim, Constanze},
     TITLE = {{${N}_\infty$}-operads and associahedra},
   JOURNAL = {Pacific J. Math.},
  FJOURNAL = {Pacific Journal of Mathematics},
    VOLUME = {315},
      YEAR = {2021},
    NUMBER = {2},
     PAGES = {285--304},
      ISSN = {0030-8730,1945-5844},
   MRCLASS = {18M80 (06A07 52B20 55N91 55P91)},
  MRNUMBER = {4366744},
       DOI = {10.2140/pjm.2021.315.285},
       URL = {https://doi.org/10.2140/pjm.2021.315.285},
}

@article {ModelStructuresOnFiniteTotalOrders,
    AUTHOR = {Balchin, Scott and Ormsby, Kyle and Osorno, Ang\'{e}lica M.
              and Roitzheim, Constanze},
     TITLE = {Model structures on finite total orders},
   JOURNAL = {Math. Z.},
  FJOURNAL = {Mathematische Zeitschrift},
    VOLUME = {304},
      YEAR = {2023},
    NUMBER = {3},
     PAGES = {Paper No. 40, 35},
      ISSN = {0025-5874,1432-1823},
   MRCLASS = {55U35 (18N40 18N70)},
  MRNUMBER = {4601189},
MRREVIEWER = {Philippe\ Gaucher},
       DOI = {10.1007/s00209-023-03287-6},
       URL = {https://doi.org/10.1007/s00209-023-03287-6},
}

@article {BMO:composition_closed,
    AUTHOR = {Balchin, Scott and MacBrough, Ethan and Ormsby, Kyle},
     TITLE = {Composition closed premodel structures and the {K}reweras
              lattice},
   JOURNAL = {European J. Combin.},
  FJOURNAL = {European Journal of Combinatorics},
    VOLUME = {116},
      YEAR = {2024},
     PAGES = {Paper No. 103879, 22},
      ISSN = {0195-6698,1095-9971},
   MRCLASS = {18A32 (06B05 18B35)},
  MRNUMBER = {4667584},
MRREVIEWER = {Sebasti\'an\ Pardo-Guerra},
       DOI = {10.1016/j.ejc.2023.103879},
       URL = {https://doi.org/10.1016/j.ejc.2023.103879},
}

@article {Baoetal,
    AUTHOR = {Bao, Linus and Hazel, Christy and Karkos, Tia and Kessler,
              Alice and Nicolas, Austin and Ormsby, Kyle and Park, Jeremie
              and Schleff, Cait and Tilton, Scotty},
     TITLE = {Transfer systems for rank two elementary abelian groups:
              characteristic functions and matchstick games},
   JOURNAL = {Tunis. J. Math.},
  FJOURNAL = {Tunisian Journal of Mathematics},
    VOLUME = {7},
      YEAR = {2025},
    NUMBER = {1},
     PAGES = {167--191},
      ISSN = {2576-7658,2576-7666},
   MRCLASS = {06B05 (20K10 55P91)},
  MRNUMBER = {4877276},
       DOI = {10.2140/tunis.2025.7.167},
       URL = {https://doi.org/10.2140/tunis.2025.7.167},
}

@article {DrozZakharevichExtending,
    AUTHOR = {Droz, Jean-Marie and Zakharevich, Inna},
     TITLE = {Extending to a model structure is not a first-order property},
   JOURNAL = {New York J. Math.},
  FJOURNAL = {New York Journal of Mathematics},
    VOLUME = {27},
      YEAR = {2021},
     PAGES = {319--348},
      ISSN = {1076-9803},
   MRCLASS = {55U35 (03B38 03C07 06A07 18B35 18N40)},
  MRNUMBER = {4226148},
MRREVIEWER = {Philippe\ Gaucher},
       URL = {},
}

@article {LatticeVia,
    AUTHOR = {Franchere, Evan E. and Ormsby, Kyle and Osorno, Ang\'{e}lica
              M. and Qin, Weihang and Waugh, Riley},
     TITLE = {Self-duality of the lattice of transfer systems via weak
              factorization systems},
   JOURNAL = {Homology Homotopy Appl.},
  FJOURNAL = {Homology, Homotopy and Applications},
    VOLUME = {24},
      YEAR = {2022},
    NUMBER = {2},
     PAGES = {115--134},
      ISSN = {1532-0073,1532-0081},
   MRCLASS = {55P91 (18A32)},
  MRNUMBER = {4467021},
MRREVIEWER = {Steven\ R.\ Costenoble},
       DOI = {10.4310/hha.2022.v24.n2.a6},
       URL = {https://doi.org/10.4310/hha.2022.v24.n2.a6},
}

@incollection {JT,
    AUTHOR = {Joyal, Andr\'e{} and Tierney, Myles},
     TITLE = {Quasi-categories vs {S}egal spaces},
 BOOKTITLE = {Categories in algebra, geometry and mathematical physics},
    SERIES = {Contemp. Math.},
    VOLUME = {431},
     PAGES = {277--326},
 PUBLISHER = {Amer. Math. Soc., Providence, RI},
      YEAR = {2007},
      ISBN = {978-0-8218-3970-6; 0-8218-3970-5},
   MRCLASS = {55U35 (18G55)},
  MRNUMBER = {2342834},
MRREVIEWER = {Nicola\ Gambino},
       DOI = {10.1090/conm/431/08278},
       URL = {https://doi.org/10.1090/conm/431/08278},
}

@unpublished {algebralattice,
      title={On the lattice of the weak factorization systems on a finite lattice}, 
      author={Yongle Luo and Baptiste Rognerud},
      year={2024},
      note={arXiv:2410.06182}, 
}

@article {MORSVZ,
    AUTHOR = {Mazur, Kristen and Osorno, Ang\'{e}lica M. and Roitzheim, Constanze and Santhanam, Rekha and Van Niel, Danika and Zapata Castro, Valentina},
     TITLE = {Uniquely compatible pairs of transfer systems for cyclic groups of order $p^nq^m$},
   JOURNAL = {Topology and its Applications},
      YEAR = {2025},
     PAGES = {},
}

@book {HoAlgQuillen,
    AUTHOR = {Quillen, Daniel G.},
     TITLE = {Homotopical algebra},
    SERIES = {Lecture Notes in Mathematics},
    VOLUME = {No. 43},
 PUBLISHER = {Springer-Verlag, Berlin-New York},
      YEAR = {1967},
     PAGES = {iv+156 pp. (not consecutively paged)},
   MRCLASS = {18.20 (55.00)},
  MRNUMBER = {223432},
MRREVIEWER = {A.\ K.\ Bousfield},
}

@article {RubinDetectingOperads,
    AUTHOR = {Rubin, Jonathan},
     TITLE = {Detecting {S}teiner and linear isometries operads},
   JOURNAL = {Glasg. Math. J.},
  FJOURNAL = {Glasgow Mathematical Journal},
    VOLUME = {63},
      YEAR = {2021},
    NUMBER = {2},
     PAGES = {307--342},
      ISSN = {0017-0895,1469-509X},
   MRCLASS = {55P91 (55P48)},
  MRNUMBER = {4244201},
MRREVIEWER = {Ben\ C.\ Walter},
       DOI = {10.1017/S001708952000021X},
       URL = {https://doi.org/10.1017/S001708952000021X},
}

\end{document}